\colorlet{darkgreen}{green!50!black}
\newtheorem{theorem}{Theorem}
\newtheorem{lemma}[theorem]{Lemma}
\newtheorem{proposition}[theorem]{Proposition}
\newtheorem{corollary}[theorem]{Corollary}
\newtheorem{remark}[theorem]{Remark}
\author[$1$]{Sandro Franceschi}
\affil[$1$]{T\'el\'ecom SudParis, Institut Polytechnique de Paris}
\title{Martin boundary of a space-time Brownian motion with drift \\ killed at the boundary of a moving cone}
\date{}
\begin{document}

\maketitle
\thispagestyle{empty}

\abstract{
We study a space-time Brownian motion with drift $B(t)=(t_0+t,y_0+W(t)+\gamma t)$ killed at the moving boundary of the cone $\{(t,x): 0\leqslant x \leqslant t \}$. This article determines the parabolic Martin boundary and all harmonic functions associated with this process. To that end, the asymptotics of Green's functions are determined along all directions. We also find the exit probabilities at the edges, the probability of remaining in the cone forever and the laws of the exit point and exit time. From this, we derive an explicit formula for the transition kernel of the process.
These results arise from two different methods initially introduced to study random walks. An analytical approach, developed in the 1970s by Malyshev and based on the steepest descent method on a Riemann surface, is used to determine the asymptotics of the Green's functions. A recursive compensation approach, inspired by the method developed in the 1990s by Adan, Wessels and Zijm, is used to determine the harmonic functions.
}

\section{Introduction}

\paragraph{Main process and overview of the results}

Let $W(t)$ be a standard Brownian motion, $\gamma\in (0,1)$ a drift, $(t_0,y_0)$ a starting point such that $0<t_0<y_0$.
We define the space-time Brownian motion by
\begin{equation}
B(t):=(t_0+t,y_0+W(t)+\gamma t).
\label{eq:B}
\end{equation}
Let the cone
\begin{equation}
C:= \{ (t',y): 0<y<t' \}
\label{eq:C}
\end{equation}
which defines a two-sided moving boundary,
and $T$ the first exit time of the cone
\begin{equation}
T:=\inf \{t\geqslant 0 : B(t)\notin C \} .
\label{eq:T}
\end{equation}
We also define the exit times on each edge of the cone
\begin{equation}
T_1:=\inf \{ t\geqslant 0 : B(t)=(t_0+t,t_0+t) , \ t>0 \}
\quad\text{and}\quad
T_2:=\inf \{ t\geqslant 0 : B(t)=(t_0+t,0) , \ t>0 \} 
\label{eq:T1T2}
\end{equation}
and we have $T=T_1 \wedge T_2$.

The purpose of the present work is to determine the parabolic Martin boundary associated with the space-time Brownian motion killed at the boundary of the cone $C$ (Theorem~\ref{cor:martinboundary}). This result derives from the asymptotics of Green's functions (Theorem~\ref{prop:asympt}).
An explicit expression is given for all the associated harmonic functions (Theorem~\ref{prop:minimalharmonic}), that is the positive functions $u$ satisfying 
\begin{equation}
\begin{cases}
(\partial^2_y +\gamma \partial_y +\partial_t)u(t,y)=0
& \text{for all } (t,y)\in C,
\\
u(t,y)=0 & \text{for all } (t,y)\in \partial C.
\end{cases}
\label{eq:harmyt}
\end{equation}
We find the persistence probability $\mathbb{P}^\alpha(T=\infty)$ (Proposition~\ref{prop:conditionedescapeproba}) and the probabilities of exit on an edge $\mathbb{P}^\alpha(T_1<T_2)$ and $\mathbb{P}^\alpha(T_2<T_1)$ (Proposition~\ref{prop:L1explicit}) for the process conditioned to drift in the direction $\alpha$ via Doob's $h$-transform.
We also compute the laws of $T$, $T_1$, $T_2$ and of the law of the exit point of the process $B(T)$ when $T<\infty$ (Theorem~\ref{thm:f1f2}). Finally, this gives a new original approach to obtain the transition kernel of the process killed at time $T$ (Corollary~\ref{cor:transitionkernel}), which we note
\begin{equation}
p^{k,C}_{(t_0,y_0)}(t,y) \mathrm{d}y :=\mathbb{P}_{(t_0,y_0)}(B(t)=(t_0+t,\mathrm{d}y), T>t).
\label{eq:defpkC}
\end{equation}

\paragraph{Proof strategy and structure of the article}
Section~\ref{sec:analyticalprelim} sets out some fundamental analytical preliminaries for our study. For practical reasons we first transform via a simple linear application, the space-time Brownian motion in the cone $C$ into a degenerate Brownian motion in the quadrant $\mathbb{R}_+^2$.
Then, we find a kernel functional equation (Proposition~\ref{prop:eqfunc}) connecting the Laplace transform of the Green's functions of this new process and the Laplace transform of the exit densities on the axes.
Section~\ref{sec:martin} is devoted to the asymptotics of Green's functions.
In this section, the key element of the proofs is the application of the steepest descent method on a Riemann surface generated by the kernel. 
Section~\ref{sec:mart}
gives a probabilist interpretation of all the harmonic functions in the cone in terms of the persistence probabilities of some conditioned processes via Doob's $h$-transform. Then, the full Martin boundary is determined.
Section~\ref{sec:harmcompensation} 
is based on a recursive compensation approach used to determine explicit expressions for the harmonic functions. The minimal Martin boundary is shown to be homeomorphic to a portion of a certain parabola defined by the kernel. Persistence and exit probabilities are computed.
Finally, Section~\ref{sec:transitionkernel} is dedicated to the inversion of the Laplace transforms to obtain the law of the exit point and the transition kernel of the process.

\paragraph{Related literature}

The literature on Brownian motion and moving boundaries is very rich, from the 1960s to the present day. The study of the crossing probability of the boundary is at the heart of many of these articles. The boundary can be one-sided or two-sided, depending on the problems considered. We cannot claim to be exhaustive here and will simply cite a few articles related to our problem. In a seminal paper published in 1960 \cite{anderson_1960}, Anderson considers a Brownian motion between two non-parallel straight lines and determines the probability of crossing one of the two straight lines before the other for a finite or infinite time horizon.
In 1964 \cite{skorokhod_1964}, see \cite[page 306-315]{skorokhod_2004} for an English version, Skorokhod founds
the probability density of the escape location of a Lévy process that does not cross 
two parallel straight lines, which provides the law of the process killed at the boundary.
In 1967 \cite{shepp_1967} Sheep considered for the first time in the continuous setting a square-root boundary. This type of boundary will subsequently be the subject of numerous studies~\cite{kralchev_moving_2007}. In the 1970s until recently Novikov solved many interesting related issues \cite{novikov_1979,novikov_1999}. 

There are also links between moving boundary results and Brownian motion in Weil chambers and reflection groups. For example, Biane, Bougerol, and O’Connell \cite{biane_bougerol_oconnell_2005} studied the probability that a Brownian motion with drift stays forever in a Weyl chamber, also called the persistence probability. Recently, Defosseux \cite{defosseux_2016} studied space-time Brownian motion in an affine Weyl chamber. Some of the results of this last paper are found again in the present article using other methods.

Since the seminal work of Martin~\cite{martin1941}, many books \cite{Doob2001} and articles have studied Brownian motion and random walks in cones, the asymptotics of their Green's functions, their Martin boundary and the associated harmonic functions. In the discrete setting, examples include
the work of Malyshev, Kourkova~et~al. on the Martin boundary of random walks in a quarter plane
\cite{malyshev_asymptotic_1973,kurkova_martin_1998,
kurkova_malyshevs_2003,kourkova_random_2011} using a saddle point method on a Riemann surface generated by the kernel of a functional equation. This powerful technique used in the present paper has recently been developed with success in the continuous setting for reflected Brownian motion in wedges or half-planes \cite{franceschi_asymptotic_2016,franceschi_kourkova_petit_2023,
ernst_franceschi_asymptotic_2021}.

In the higher dimensional case, the famous article of Ney and Spitzer
\cite{ney_spitzer_66} computes the asymptotics of the Green's function of random walks with drift in $\mathbb{Z}^d$ and shows that the Martin boundary is homeomorphic to the unit sphere.
Many other interesting articles by Ignatiouk et al. deals with the Martin boundary of random walks in half-spaces and orthants \cite{ignatiouk_loree_2010,ignatiouk-robert2010,ignatiouk_robert2009}. Denisov and Wachtel study the tail asymptotics for the exit time of a multidimensional random walk in a cone~\cite{denisov_wachtel_2015}. Duraj et al. determine the asymptotic of the Green function for random walks without drift confined to multidimensional convex cones \cite{duraj_tarrago_raschel_wachtel}.
Garbit and Raschel study the survival probability of multidimensional random walks in pyramids \cite{garbit_raschel_pyramid_2023}.
We also mention the article of Hoang et al. on the construction of harmonic functions in wedges using Boundary value problems \cite{hoang_raschel_tarrago_constructing_22}.

In the continuous setting, several papers study Brownian motion in cones in higher dimensions and we mention a few of them here. De Blassie determines the distributions of first exit times of Brownian motion in cones and computes asymptotics \cite{deblassie_87}. Bañuelos and Smits study the asymptotic behavior of Brownian motion in cones and express their results with infinite series involving its transition density \cite{banuelos_smits_97}. Garbit and Raschel study the asymptotics of the tail distribution of the first exit time of Brownian motion with drift for a large class of cones
\cite{garbit_exit_brownian_2014}.

Finally, we mention the compensation approach developed by Adan, Wessels and Zijm \cite{adan_wessels_zijm_compensation_93} which is an inspiration for this paper. The recent article of Hoang et al. \cite{hoang_raschel_tarrago_harmonic_22} uses this method to compute discrete harmonic functions in the quadrant. The paper of Ichiba et al. \cite{FIKR_2023} also uses this approach to compute the invariant measure of a degenerate competing three-particle system.



\section{Analytical preliminaries}
\label{sec:analyticalprelim}

\paragraph{Killed degenerate Brownian motion 
in the quadrant}
A simple linear transformation maps the space-time Brownian motion $B(t)$ defined in~\eqref{eq:C} in the cone $C$ onto a degenerate Brownian motion with drift $Z(t)$ in the quadrant $\mathbb{R}_+^2$.
To that purpose, we define the linear transform $\ell$ by
$$
\ell(t',y):=(t'-y,y) 
\quad\text{and}\quad
\ell^{-1}(x,y):=(x+y,y).
$$
Thanks to $\ell$ we perform the change of variable
$$t'=t_0+t=x+y .$$
We denote $B(t)=(t_0+t,y_0+W(t)+\gamma t)=(t',Y(t))$ the coordinates of the space-time Brownian motion. We define the drifted degenerate Brownian motion by 
\begin{align*}
Z(t)=(X(t),Y(t))& :=\ell (B(t))
\\ & \ = (t_0-y_0 - W(t)+(1-\gamma )t , y_0+W(t)+\gamma t)
\\ & \ =z_0+\mathbf{W}(t)+\mu t 
\end{align*}
where $\mathbf{W}(t):=(-W(t),W(t))$ is a degenerate Brownian motion in $\mathbb{R}^2$ of covariance matrix 
$
\begin{pmatrix}
1 & -1
\\
-1 & 1
\end{pmatrix}
$
and we denote $\mu:=(1-\gamma,\gamma)\in\mathbb{R}_+^2$ the positive drift and $z_0=(x_0,y_0):=(t_0-y_0,y_0)\in\mathbb{R}_+^2$ the starting point.

We have $\ell(C)=(\mathbb{R}_+^*)^2$ where $C$ is defined in~\eqref{eq:C}. The stopping time $T$ defined in~\eqref{eq:T} is equal to
$$
T=\inf \{ t\geqslant 0 : Z(t) \notin (\mathbb{R}_+^*)^2 \}.
$$
It is the first exit time of the quadrant for the process $Z(t)$. The stopping times on each of the boundaries defined in~\eqref{eq:T1T2} are equal to
$$
T_1=\inf \{ t\geqslant 0 : Z(t) \in \{0\}\times \mathbb{R}_+ \}
\quad\text{and}\quad
T_2=\inf \{ t\geqslant 0 : Z(t) \in  \mathbb{R}_+\times \{0\} \}
$$
and we still have $
T=T_1\wedge T_2$.

\begin{figure}[hbtp]
\centering
\includegraphics[scale=0.7]{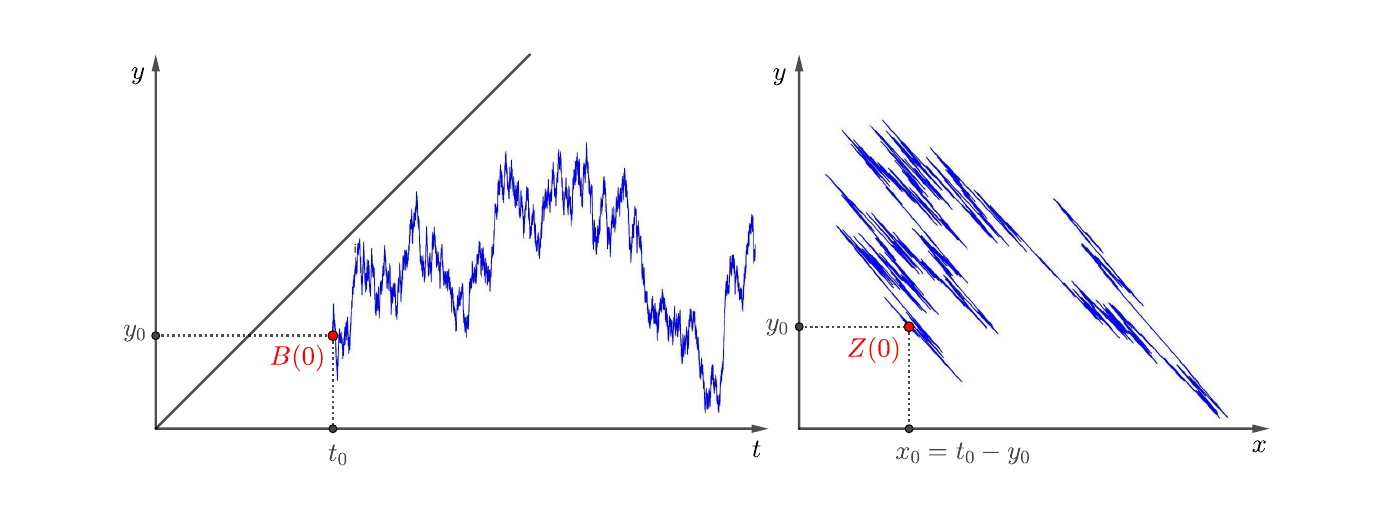}
\caption{Space time Brownian motion in $C$ and degenerate Brownian motion in $\mathbb{R}_+^2$}
\end{figure}

\paragraph{Green's functions and exit densities}

For a set $S\subset (\mathbb{R}_+^*)^2$ we define the Green's measure of the process killed at the boundary by
$$
G_{z_0}(S):=\int_0^\infty \mathbb{P}(Z(t)\in S ,t<T)\mathrm{d}t.
$$
Let us remark that for $S\subset (\mathbb{R}_+^*)^2$ we have $\mathbb{P}(Z(t)\in S ,t<T)=\mathbb{P}(Z(t\wedge T) \in S)$.
We assume that this measure has a density denoted
$$
g_{z_0}(z) \mathrm{d}z :=G_{z_0}(\mathrm{d}z).
$$
Let us recall that in~\eqref{eq:defpkC} we noted $p^{k,C}_{(t_0,y_0)}(t,y)$ the transition kernel of the space time Brownian motion killed in the cone $C$. It is important to remark that, up to the following change of variables, the Green's function of the process $Z$ killed at the boundary of the quadrant is equal to the transition kernel of $B$ the space time Brownian motion killed at the boundary of $C$ :
\begin{equation}
p^{k,C}_{(t_0,y_0)}(t,y)=g_{z_0}(z)
\quad
\text{ where } z_0=(t_0-y_0,y_0) \text{ and } z=(t_0+t-y,y).
\label{eq:green=densitytransition}
\end{equation}
One may consult the famous Doob's book  \cite[Chap IX \S 17]{Doob2001} which explains how Brownian Motion transition kernel may be seen as Green functions. 
The exit measures on the boundaries are defined for $S\subset \mathbb{R}_+^2$ by
\begin{align*}
A_1(S) &:=\mathbb{P}(Z(T_1)\in S , T_1<T_2)=\mathbb{E}(\mathds{1}_{Z(T_1)\in S}\mathds{1}_{T_1<T_2}),
\\
A_2(S) &:=\mathbb{P}(Z(T_2)\in S , T_2<T_1)
=\mathbb{E}(\mathds{1}_{Z(T_2)\in S}\mathds{1}_{T_2<T_1}),
\end{align*}
and the support of the measure $A_1$ (resp. $A_2$) lies on $\{0\}\times \mathbb{R}_+$ (resp. $\mathbb{R}_+\times \{0\}$).
We denote their densities $f_1$ and $f_2$ defined as follows
$$
f_1(y) \mathrm{d}y \delta_0(\mathrm{d}x) 
:=A_1(\mathrm{d}x,\mathrm{d}y),
\quad
f_2(x) \mathrm{d}x \delta_0(\mathrm{d}y) 
:=A_2(\mathrm{d}x,\mathrm{d}y),
$$
where $\delta_0$ is the Dirac measure in $0$.
One may notice that $(t_0+T)\mathds{1}_{T<\infty}=(X(T)+Y(T))\mathds{1}_{T<\infty}= Y(T_1) \mathds{1}_{T_1<T_2} + X(T_2) \mathds{1}_{T_2<T_1}$ and
we deduce that the cumulative distribution function of $T$ is equal to $F_T(t):=\mathbb{P}(T<t)=A_1(\{0\}\times [0,t_0+t])+A_2( [0,t_0+t]\times\{0\})$ and then $f_T$ the density of $T$ is equal to 
\begin{equation}
f_T(t)=f_1(t_0+t)+f_2(t_0+t).
\label{eq:fT}
\end{equation}
Note that to lighten the notation we have not noted the dependence at $z_0$ for $A_1$, $A_2$, $f_1$, $f_2$ and $f$.

\paragraph{Functional equation}
Let us define the Laplace transform of the Green's function for $(p,q)\in\mathbb{C}^2$ by
$$
L(p,q):=\iint_{\mathbb{R}_+^2} g_{z_0}(x,y)e^{px+qy} \mathrm{d}x\mathrm{d}y=\mathbb{E}\left(\int_0^\infty e^{(p,q)\cdot Z_t} \mathds{1}_{t<T} \mathrm{d}t \right) .
$$
Remembering that $Z=(X,Y)$ the Laplace transform of the exit densities are defined by
$$
L_1(q):=\int_0^\infty f_1(y) e^{q y}\mathrm{d}y=
\mathbb{E}\left(e^{q Y({T_1})} \mathds{1}_{T_1<T_2} \right) ,
\quad
L_2(p):=\int_0^\infty f_2(x) e^{p x}\mathrm{d}x
=\mathbb{E}\left(e^{p X({T_2})} \mathds{1}_{T_2<T_1} \right) .
$$
Once again, we omit to note the dependence at $z_0$ for $L$, $L_1$ and $L_2$. One may remark that $L(0,0)=\mathbb{E}[T]=\infty$ since $\mathbb{P}(T=\infty)>0$ and
$$
L_1(0)=  \mathbb{P}(T_1<T_2) ,
\quad
L_2(0)= \mathbb{P}(T_2<T_1),
\quad
L_1(0)+L_2(0)=\mathbb{P}(T<\infty).
$$
\begin{proposition}[Functional equation]
For $(p,q)\in\mathbb{C}^2$ such that $\Re p <0$ and $\Re q<0$ the Laplace transforms converge and we have
\begin{equation}
K(p,q)L(p,q)=L_1(q)+ L_2(p)-e^{px_0+qy_0}
\label{eq:eqfunc}
\end{equation}
where the kernel $K$ is defined by
\begin{equation}
K(p,q):=\frac{1}{2}(p-q)^2+(1-\gamma) p+\gamma q.
\label{eq:K}
\end{equation}
\label{prop:eqfunc}
\end{proposition}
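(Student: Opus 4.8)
The plan is to test the dynamics of $Z$ against the exponential $e^{px+qy}$, which is an eigenfunction of the generator of $Z$, and then let the time horizon tend to infinity. Writing $\mathcal L:=\tfrac12(\partial_x-\partial_y)^2+(1-\gamma)\partial_x+\gamma\partial_y$ for the generator of the degenerate diffusion $Z=z_0+\mathbf W+\mu t$, a direct computation yields the key algebraic identity
\begin{equation*}
\mathcal L\,e^{px+qy}=\Bigl(\tfrac12(p-q)^2+(1-\gamma)p+\gamma q\Bigr)e^{px+qy}=K(p,q)\,e^{px+qy}.
\end{equation*}
Equivalently, applying It\^o's formula to $s\mapsto e^{(p,q)\cdot Z_s}$ and using $\mathbf W=(-W,W)$, so that $(p,q)\cdot\mathrm d\mathbf W=(q-p)\,\mathrm dW$ and $\mathrm d[(p,q)\cdot Z]_s=(q-p)^2\,\mathrm ds$, I obtain
\begin{equation*}
e^{(p,q)\cdot Z_{t\wedge T}}=e^{px_0+qy_0}+(q-p)\int_0^{t\wedge T}e^{(p,q)\cdot Z_s}\,\mathrm dW(s)+K(p,q)\int_0^{t\wedge T}e^{(p,q)\cdot Z_s}\,\mathrm ds.
\end{equation*}

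Next I would take expectations and then send $t\to\infty$. The crucial simplification, and the reason the hypothesis $\Re p<0$, $\Re q<0$ appears, is that for $s\leq T$ the process stays in the closed quadrant, so $X_s,Y_s\geq0$ and hence $\bigl|e^{(p,q)\cdot Z_s}\bigr|=e^{\Re(p)X_s+\Re(q)Y_s}\leq1$. The stochastic integrand is therefore bounded by $|q-p|$, the integral is a genuine martingale up to the bounded time $t\wedge T$, and its expectation vanishes. This leaves
\begin{equation*}
\mathbb E\bigl[e^{(p,q)\cdot Z_{t\wedge T}}\bigr]=e^{px_0+qy_0}+K(p,q)\,\mathbb E\Bigl[\int_0^{t\wedge T}e^{(p,q)\cdot Z_s}\,\mathrm ds\Bigr].
\end{equation*}

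The final step is to pass to the limit $t\to\infty$ in the two remaining terms. For the left-hand side I would use dominated convergence against the constant $1$: on $\{T<\infty\}$ one has $Z_{t\wedge T}=Z_T$ eventually, lying either on $\{0\}\times\mathbb R_+$ (when $T_1<T_2$, giving $e^{qY(T_1)}$) or on $\mathbb R_+\times\{0\}$ (when $T_2<T_1$, giving $e^{pX(T_2)}$), whereas on $\{T=\infty\}$ the positive drift $\mu=(1-\gamma,\gamma)$ forces $X_t,Y_t\to+\infty$ and thus $e^{(p,q)\cdot Z_t}\to0$. Hence the left-hand side converges to $\mathbb E[e^{qY(T_1)}\mathds 1_{T_1<T_2}]+\mathbb E[e^{pX(T_2)}\mathds 1_{T_2<T_1}]=L_1(q)+L_2(p)$. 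For the integral term the envelope $\int_0^T\bigl|e^{(p,q)\cdot Z_s}\bigr|\,\mathrm ds$ has expectation $L(\Re p,\Re q)$, finite precisely because the Laplace transform converges on $\{\Re p<0,\Re q<0\}$; dominated convergence then gives $\mathbb E\bigl[\int_0^T e^{(p,q)\cdot Z_s}\,\mathrm ds\bigr]=L(p,q)$. Rearranging $L_1(q)+L_2(p)=e^{px_0+qy_0}+K(p,q)L(p,q)$ produces~\eqref{eq:eqfunc}.

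The routine-looking but genuinely necessary part of the argument is the integrability bookkeeping that makes these interchanges legitimate. The pointwise bound $\bigl|e^{(p,q)\cdot Z_s}\bigr|\leq1$ on $\{s\leq T\}$ does almost all the work for the martingale term and the left-hand limit; the only point requiring a separate input is the finiteness of $\mathbb E\bigl[\int_0^T|e^{(p,q)\cdot Z_s}|\,\mathrm ds\bigr]$, that is, the asserted convergence of $L$ for $\Re p,\Re q<0$, which is part of the statement. I expect this convergence to be the main obstacle: it amounts to controlling the Green's measure near the corner and exploiting the exponential decay of $e^{\Re(p)X_s+\Re(q)Y_s}$ along the trajectory, which in turn rests on the positivity of both drift components $1-\gamma$ and $\gamma$ together with the local integrability of $g_{z_0}$.
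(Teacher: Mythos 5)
Your proof is correct and follows essentially the same route as the paper: Dynkin's formula (your It\^o computation followed by taking expectations) applied to $e^{px+qy}$ up to time $t\wedge T$, then $t\to\infty$. The one loose end you flag --- finiteness of $\mathbb{E}\int_0^T\bigl|e^{(p,q)\cdot Z_s}\bigr|\,\mathrm{d}s$ --- does not require a separate estimate on the Green's measure near the corner: for real $p,q<0$ the pre-limit identity itself bounds the monotone quantity $\mathbb{E}\int_0^{t}e^{pX_s+qY_s}\mathds{1}_{s<T}\,\mathrm{d}s$ uniformly in $t$ (the left-hand side is at most $1$, and one divides by $K(p,q)$, shifting $(p,q)$ slightly and using monotonicity of the integrand in $(p,q)$ if $K(p,q)=0$), which is exactly how the paper closes this point before invoking dominated convergence for complex $(p,q)$.
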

\begin{proof}
We denote 
\begin{equation}
\mathcal{G}:=\frac{1}{2}\left(\frac{\partial}{\partial x}-\frac{\partial}{\partial y}\right)^2+(1-\gamma) \frac{\partial}{\partial x}+\gamma \frac{\partial}{\partial y}
\label{eq:Generateur}
\end{equation} 
the infinitesimal generator of $Z$. We apply Dynkin's formula, for $f$ twice differentiable we obtain
$$
\mathbb{E} f(Z(t\wedge T))=f(z_0)+ \mathbb{E} \int_0^{t \wedge T} \mathcal{G}f(Z(s))\mathrm{d}s 
$$
which can be rewritten
$$
\mathbb{E} (f(Z(t\wedge T_1))\mathds{1}_{T_1<T_2})+\mathbb{E} (f(Z(t\wedge T_2))\mathds{1}_{T_2<T_1})=f(z_0)+ \mathbb{E} \int_0^{t} \mathcal{G}f(Z(s)) \mathds{1}_{s<T}\mathrm{d}s .
$$
By taking $f(x,y)=e^{px+qy}$ we obtain 
$$
\mathbb{E}\left(e^{q Y({t\wedge T_1})} \mathds{1}_{T_1<T_2} \right)+\mathbb{E}\left(e^{p X({t\wedge T_2})} \mathds{1}_{T_2<T_1} \right)=e^{px_0+qy_0}+ K(p,q) \mathbb{E}\left(\int_0^t e^{(p,q)\cdot Z_s} \mathds{1}_{s<T} \mathrm{d}s \right) .
$$
The two expectations of the left-hand side are bounded by $1$ since $\Re p <0$ and $\Re q<0$ and then the expectation of the right-hand side is also finite.
By making $t$ tend towards infinity 
and by definition of $L$, $L_1$ and $L_2$ we obtain the functional equation~\eqref{eq:eqfunc}.
\end{proof}

\paragraph{Study of the kernel and analytic continuation}

The kernel $K(p,q)$ defined in~\eqref{eq:K}
leads us to introduce four algebraic functions $P^+$, $P^-$, $Q^+$ and $Q^-$ satisfying
$$
K(P^\pm(q),q)=K(p,Q^\pm(p))=0,
$$
analytic on $\mathbb{C}\setminus [(1-\gamma)^2/2,\infty)$ for $P^\pm$, on $\mathbb{C}\setminus [\gamma^2/2,\infty)$ for $Q^\pm$, and defined by 
\begin{equation}
P^\pm(q):=\gamma-1+q\pm\sqrt{(1-\gamma)^2-2q}
\quad\text{and}\quad
Q^\pm(p):=-\gamma+p\pm\sqrt{\gamma^2-2p}.
\label{eq:PQ}
\end{equation}
Thanks to the functional equation~\eqref{eq:eqfunc} we can now continue meromorphically $L_1$ to the domain $\{q\in\mathbb{C}:\Re q < (1-\gamma)^2 /2\}$ by the formula
$$
L_1(q)= e^{P^-(q) x_0 +q y_0} - L_2(P^-(q)).
$$
We just have to verify that $\Re P^-(q) <0$ when $\Re q < (1-\gamma)^2 /2$ and to remember that $L_2$ is analytic in the complex half-plane with negative real part.
In the same way, we extend $L_2$ analytically to the domain $\{p\in\mathbb{C}:\Re p < \gamma^2 /2\}$.

\paragraph{PDE and derivative of the Green's functions}

Let $z=(x,y)\in\mathbb{R}^2$.
Let us recall that we defined in~\eqref{eq:Generateur}
the infinitesimal generator $\mathcal{G}$. We now define its dual $\mathcal{G}^*$ by
$$\mathcal{G}^*:=\frac{1}{2}\left(\frac{\partial}{\partial x}-\frac{\partial}{\partial y}\right)^2-(1-\gamma) \frac{\partial}{\partial x}-\gamma \frac{\partial}{\partial y} .$$
The Green's function $g_{z_0}$ satisfies the following classical parabolic partial differential equation~\cite[Chap IX \S 17]{Doob2001}, 
denoting $\delta_{z_0}$ the Dirac measure in $z_0\in\mathbb{R}^2$ we have
\begin{equation}
\begin{cases}
\mathcal{G}^* g_{z_0}=-\delta_{z_0} & \text{on } (\mathbb{R}_+^*)^2,
\\
g_{z_0}=0 & \text{on } \mathbb{R}_+\times \{0\} \cup  \{0\} \times \mathbb{R}_+.
\end{cases}
\label{eq:pde}
\end{equation}

\begin{proposition}[Exit densities seen as derivative of the Green's function]
We have the following links between the Green's density and the exit densities:
$$
f_1(y)= \frac{1}{2}\frac{\partial g_{z_0}}{\partial x} (0,y) ,
\quad
f_2(x)=\frac{1}{2}\frac{\partial g_{z_0}}{\partial y} (x,0).
$$
\label{prop:greenderiv}
\end{proposition}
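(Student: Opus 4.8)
The plan is to rederive the functional equation of Proposition~\ref{prop:eqfunc} a second time, now \emph{analytically} from the dual PDE~\eqref{eq:pde} rather than probabilistically from Dynkin's formula, and then to read off the boundary contributions by comparison. Concretely, I would fix $(p,q)$ with $\Re p<0$ and $\Re q<0$, multiply the identity $\mathcal{G}^* g_{z_0}=-\delta_{z_0}$ by the test function $\phi(x,y):=e^{px+qy}$, and integrate over the quadrant $(\mathbb{R}_+^*)^2$. The right-hand side immediately yields $-\iint_{\mathbb{R}_+^2}\delta_{z_0}\,\phi=-e^{px_0+qy_0}$, so everything reduces to computing $\iint_{\mathbb{R}_+^2}(\mathcal{G}^* g_{z_0})\,\phi$ by transferring the operator onto $\phi$ through integration by parts.

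The point of this transfer is that $\mathcal{G}^*$ is the formal adjoint of the generator $\mathcal{G}$ of~\eqref{eq:Generateur}, so moving all derivatives onto $\phi$ replaces $\mathcal{G}^*$ by $\mathcal{G}$ acting on $\phi$; since $\mathcal{G}\phi=K(p,q)\phi$ by the very definition~\eqref{eq:K} of the kernel, the bulk term reconstructs exactly $K(p,q)L(p,q)$. The only subtlety is the boundary flux along the two axes, and here the Dirichlet condition $g_{z_0}=0$ on $\mathbb{R}_+\times\{0\}\cup\{0\}\times\mathbb{R}_+$ does double duty: it kills every term containing $g_{z_0}$ itself, and, because $g_{z_0}$ vanishes identically along each axis, it also kills the tangential derivatives $\partial_y g_{z_0}(0,y)$ and $\partial_x g_{z_0}(x,0)$. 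Consequently the first-order drift terms and the mixed second derivative $\partial_x\partial_y$ contribute no boundary flux, and only the two normal second-derivatives survive, each weighted by the diffusion coefficient $\tfrac12$. The computation should collapse to the clean identity
\begin{equation*}
K(p,q)L(p,q)=\tfrac12\int_0^\infty \frac{\partial g_{z_0}}{\partial x}(0,y)\,e^{qy}\,\mathrm{d}y+\tfrac12\int_0^\infty \frac{\partial g_{z_0}}{\partial y}(x,0)\,e^{px}\,\mathrm{d}x-e^{px_0+qy_0}.
\end{equation*}

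Comparing this with~\eqref{eq:eqfunc} gives $L_1(q)+L_2(p)=\tfrac12\int_0^\infty \partial_x g_{z_0}(0,y)e^{qy}\mathrm{d}y+\tfrac12\int_0^\infty \partial_y g_{z_0}(x,0)e^{px}\mathrm{d}x$. Since the first summand on each side depends only on $q$ and the second only on $p$, the difference $L_1(q)-\tfrac12\int_0^\infty \partial_x g_{z_0}(0,y)e^{qy}\mathrm{d}y$ is simultaneously a function of $q$ alone and of $p$ alone, hence a constant; letting $q\to-\infty$ along the reals forces $L_1(q)\to 0$ (as $f_1$ is integrable) and, granting the same decay for the boundary integral, pins the constant to $0$. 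This identifies $L_1(q)=\tfrac12\int_0^\infty \partial_x g_{z_0}(0,y)e^{qy}\mathrm{d}y$ and $L_2(p)=\tfrac12\int_0^\infty \partial_y g_{z_0}(x,0)e^{px}\mathrm{d}x$, whence injectivity of the Laplace transform yields the two claimed formulas. The main obstacle is not the algebra but justifying the integration by parts rigorously: one must control the decay of $g_{z_0}$ and of its first derivatives at infinity and near the corner so that all limit terms as $x,y\to\infty$ vanish, and handle the singular source $\delta_{z_0}$ with care (for instance by working on $(\mathbb{R}_+^*)^2$ minus a small ball around $z_0$, or through a smoothing/approximation argument combined with standard parabolic regularity for $g_{z_0}$). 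A fully probabilistic alternative, identifying the normal derivative of the Green's function with the harmonic-measure (exit) density directly, would give the same conclusion but requires the same boundary-regularity input.
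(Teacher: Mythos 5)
Your proposal is correct and follows essentially the same route as the paper: a weak formulation of the dual PDE \eqref{eq:pde} tested against $e^{px+qy}$, integration by parts to produce $K(p,q)L(p,q)=\widehat L_1(q)+\widehat L_2(p)-e^{px_0+qy_0}$ with $\widehat L_1,\widehat L_2$ the Laplace transforms of the normal derivatives, and identification with \eqref{eq:eqfunc}. Your extra care on the identification step (separating the $p$- and $q$-dependence and pinning the constant by letting $q\to-\infty$) and on the boundary/decay justifications only makes explicit what the paper leaves implicit.
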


\begin{proof}
A weak formulation of the partial differential equation~\eqref{eq:pde} applied to an exponential test function leads to a functional equation linking the Laplace transforms of the Green's function and their derivatives. 
To do this, it is enough to integrate the partial differential equation~\eqref{eq:pde} and to make successive integration by parts on the integral 
$$\iint_{\mathbb{R}_+^2} \mathcal{G}^*g(z_0,z)e^{z\cdot(p,q)}\mathrm{d}z = -e^{z_0\cdot (p,q)}.$$
Then, defining $\widehat L_1 (q):=\frac{1}{2}\int_0^\infty \frac{\partial g}{\partial x} (0,y) e^{q y}\mathrm{d}y $ and symmetrically $\widehat L_2 (p)$ we obtain
$$
K(p,q)L(p,q)=\widehat L_1(q)+ \widehat L_2(p)-e^{px_0+qy_0}.
$$
Comparing this equation with the functional equation~\eqref{eq:eqfunc}, we obtain by identification that $\widehat L_1 =L_1$ and $\widehat L_2 =L_2$ and then we can conclude.
\end{proof}

\begin{remark}[Harmonic functions]
A function $u(t,y)$ satisfies~\eqref{eq:harmyt} if and only if $h(x,y):=u(x+y,y)$ satisfies $\mathcal{G}h=0$ on $(\mathbb{R}_+^*)^2$ and $h=0$ on the boundaries $\mathbb{R}_+\times \{0\} \cup  \{0\} \times \mathbb{R}_+$. We say that $h$ (resp. $u$) is harmonic for the killed degenerate Brownian motion $Z$ (resp. is harmonic for the killed space-time Brownian motion $B$).
\end{remark}

\section{Asymptotics via Malyshev's analytical approach}
\label{sec:martin}

\subsection{Asymptotics of Green's functions}

By using the functional equation~\eqref{eq:eqfunc}, inverting the Laplace transform and applying the steepest descent method, it is possible to compute the asymptotics of Green's function.

\begin{theorem}[Asymptotics of Green's function and exit densities]
Let $\alpha\in (0,\pi/2)$ and $z=(x,y)\in\mathbb{R}_+^2$. When $z\to\infty$ and $y/x \to \tan\alpha$ we have
\begin{equation}
g_{z_0}(z)\sim  \frac{h^\alpha(z_0)}{\sqrt{|z|}} \frac{e^{-z\cdot (p(\alpha),q(\alpha))}}{\sqrt{2\pi (\cos\alpha+\sin\alpha)}}
\label{eq:asymptmain}
\end{equation}
where the so-called saddle point $(p(\alpha),q(\alpha))$ is defined by
\begin{align}
(p(\alpha),q(\alpha))&:=\mathrm{argmax} \{p\cos\alpha+q\sin\alpha :(p,q)\in\mathbb{R}^2, K(p,q)=0 \}
\label{eq:saddlepoint}
\\
&= \left( \frac{\gamma^2 }{2} - \frac{1}{2(1+\frac{1}{\tan \alpha})^2} , \frac{(1-\gamma)^2}{ 2} - \frac{1}{2(1+{\tan \alpha})^2} \right)
\label{eq:saddlepoint2}
\end{align}
and $h^\alpha$ is a harmonic function for the process (i..e $\mathcal{G} h^\alpha=0 $ on $\mathbb{R}_+^2$ and $h^\alpha=0$ on $\mathbb{R}\times \{0\} \cup  \{0\} \times \mathbb{R}$) defined for $\alpha\in(0,\pi/2)$ by
\begin{equation}
h^\alpha(z_0):= e^{p(\alpha) x_0+q(\alpha) y_0}-L_1(q(\alpha))-L_2(p(\alpha)) .
\label{eq:halpha}
\end{equation}
We also have the asymptotics of the exit densities on the boundaries
\begin{equation}
f_1(y)\underset{y\to\infty}{\sim} \frac{h^{\pi/2}(z_0)}{y^{3/2}}\frac{e^{-y q(\pi/2)}}{\sqrt{2\pi}}
\quad \text{and} \quad
f_2(x)\underset{x\to\infty}{\sim} \frac{h^0(z_0)}{x^{3/2}}\frac{e^{-x p(0)}}{\sqrt{2\pi}}
\label{eq:asymptboundaries}
\end{equation}
where we define
$$
h^{\pi/2}(z_0):=x_0 e^{x_0 p(\pi/2) +y_0 q(\pi/2)}-L_2'(p(\pi/2))
\quad \text{and} \quad
h^0(z_0):=y_0 e^{x_0 p(0) +y_0 q(0)} - L_1'(q(0))
.
$$
\label{prop:asympt}
\end{theorem}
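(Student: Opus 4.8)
The plan is to recover $g_{z_0}$ from its Laplace transform by a double Bromwich inversion and to feed in the functional equation~\eqref{eq:eqfunc}:
$$g_{z_0}(x,y)=\frac{1}{(2\pi i)^2}\int_{\Re q=c_q}\int_{\Re p=c_p}\frac{L_1(q)+L_2(p)-e^{px_0+qy_0}}{K(p,q)}\,e^{-px-qy}\,\mathrm{d}p\,\mathrm{d}q,$$
with $c_p,c_q<0$ in the convergence domain of Proposition~\ref{prop:eqfunc}. Since $K$ is quadratic in $p$ with the two roots $P^\pm(q)$ of~\eqref{eq:PQ}, I would integrate out $p$ by residues, using the analytic continuations of $L_1,L_2$ recorded after~\eqref{eq:eqfunc} to deform the contour rightward (as dictated by $x>0$) and to pick up the pole of $1/K$ at $p=P^+(q)$, the branch that carries the saddle below. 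The residue then supplies the factor $1/\partial_p K\bigl(P^+(q),q\bigr)=1/\sqrt{(1-\gamma)^2-2q}$ together with the numerator $L_1(q)+L_2\bigl(P^+(q)\bigr)-e^{P^+(q)x_0+qy_0}$, leaving a single contour integral living on the algebraic curve $\{K=0\}$, which is Malyshev's Riemann surface.

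Writing $x=|z|\cos\alpha$ and $y=|z|\sin\alpha$, the remaining integral has the form $\int A(q)\,e^{-|z|\psi(q)}\,\mathrm{d}q$ with phase $\psi(q)=P^+(q)\cos\alpha+q\sin\alpha$, so its $|z|\to\infty$ behaviour is governed by the steepest descent method. The dominant contribution comes from the critical point $\psi'(q)=0$, which is exactly the maximiser of $p\cos\alpha+q\sin\alpha$ along $\{K=0\}$, i.e. the saddle point~\eqref{eq:saddlepoint}; imposing the Lagrange condition $(\cos\alpha,\sin\alpha)=\lambda\nabla K$ on the curve, with $\lambda=\cos\alpha+\sin\alpha$, and solving yields the closed form~\eqref{eq:saddlepoint2}. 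Deforming onto the steepest descent path through $(p(\alpha),q(\alpha))$ and expanding $\psi$ to second order turns the integral into a Gaussian one; the curvature $\psi''$ at the saddle combines with the residue factor $1/\sqrt{(1-\gamma)^2-2q}$ to produce exactly the constant $1/\sqrt{2\pi(\cos\alpha+\sin\alpha)}$ and the $|z|^{-1/2}$ decay, while the numerator at the saddle is, up to the residue and orientation signs, $e^{p(\alpha)x_0+q(\alpha)y_0}-L_1(q(\alpha))-L_2(p(\alpha))=h^\alpha(z_0)$. This gives~\eqref{eq:asymptmain}.

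For the boundary densities only a single inversion is needed, $f_1(y)=\frac{1}{2\pi i}\int L_1(q)\,e^{-qy}\,\mathrm{d}q$, and the tail is controlled by the singularity of the continued $L_1$ nearest to the contour on the right: the square-root branch point of $P^-$ at $q=(1-\gamma)^2/2=q(\pi/2)$, inherited through $L_1(q)=e^{P^-(q)x_0+qy_0}-L_2(P^-(q))$. Expanding $P^-(q)=p(\pi/2)-\sqrt{(1-\gamma)^2-2q}+\cdots$ and Taylor expanding $L_2$ gives $L_1(q)=(\text{holomorphic})-\sqrt2\,h^{\pi/2}(z_0)\sqrt{(1-\gamma)^2/2-q}+\cdots$, with coefficient $h^{\pi/2}(z_0)=x_0e^{x_0p(\pi/2)+y_0q(\pi/2)}-L_2'(p(\pi/2))$. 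A Tauberian transfer theorem then converts a singularity $-c\sqrt{q(\pi/2)-q}$ of the Laplace transform into a tail $\frac{c}{2\sqrt{\pi}}\,y^{-3/2}e^{-y q(\pi/2)}$; taking $c=\sqrt2\,h^{\pi/2}(z_0)$ reproduces the claimed $h^{\pi/2}(z_0)\,y^{-3/2}e^{-y q(\pi/2)}/\sqrt{2\pi}$, and symmetrically for $f_2$, establishing~\eqref{eq:asymptboundaries}. This branch-point mechanism is precisely what the interior saddle degenerates to as $\alpha\to0,\pi/2$, when $(p(\alpha),q(\alpha))$ coalesces with the branch point of $P^\pm$.

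The main difficulty I anticipate is the rigorous justification of the contour deformation underlying the steepest descent. One must describe the full singularity structure of $L$ (equivalently of the continuations of $L_1,L_2$) on the Riemann surface, verify that the Bromwich contour can be slid onto the steepest descent path through the saddle without sweeping across uncontrolled singularities — or collect their residues when it does — and show that the off-saddle contribution is exponentially negligible, uniformly as $y/x\to\tan\alpha$. The degeneracy of the covariance makes $\{K=0\}$ a parabola rather than an ellipse, so the global geometry of the surface and the admissible deformations must be analysed with care; the most delicate point is the transition regime where the interior saddle approaches the boundary branch points, that is, matching~\eqref{eq:asymptmain} with~\eqref{eq:asymptboundaries} uniformly near $\alpha=0,\pi/2$.
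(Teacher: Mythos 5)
Your overall strategy is the paper's: double Bromwich inversion combined with the functional equation~\eqref{eq:eqfunc}, a residue reduction to simple integrals, steepest descent at the saddle point on $\{K=0\}$ for the interior directions, and a branch-point expansion of the continued $L_1$ at $q^+=q(\pi/2)$ followed by a Tauberian transfer for the boundary densities. The boundary part of your argument reproduces Lemma~\ref{lem:branch} and the constants check out ($\sqrt{2}\,h^{\pi/2}/(2\sqrt{\pi})=h^{\pi/2}/\sqrt{2\pi}$), and your Lagrange-multiplier identity $\lambda=\cos\alpha+\sin\alpha$ (from $\partial_pK+\partial_qK=1$) is a clean way to get~\eqref{eq:saddlepoint2}.

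There is, however, a genuine gap in your reduction step. You integrate out $p$ by residues for \emph{all three} numerator terms, producing a single $q$-integral whose amplitude contains $L_2\bigl(P^+(q)\bigr)$. This is not justified for the $L_2(p)$ term: $L_2$ is only analytic (after continuation) on $\{\Re p<\gamma^2/2\}$, so the arc at infinity in the right half $p$-plane cannot be discarded --- $L_2(p)e^{-px}$ is not even defined there --- and the composite $L_2(P^+(q))$ is meaningless for $q$ far out on the Bromwich line, since $\Re P^+(q)\sim\sqrt{|\Im q|}\to+\infty$ there. Making this rigorous would require continuing $L_2$ globally on the Riemann surface of $\{K=0\}$ and tracking the singularities swept by the deformation, which you flag as a difficulty but do not carry out. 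The paper sidesteps the issue entirely in Lemma~\ref{lem:I123}: it applies the residue theorem \emph{term by term}, integrating the $L_1(q)$ term in $p$ (pole at $P^+(q)$) but the $L_2(p)$ term and the exponential term in $q$ (pole at $Q^+(p)$), so that each resulting amplitude ($L_1(q)$, $L_2(p)$, $e^{px_0+Q^+(p)y_0}$) is evaluated only on its original contour where it is known to be analytic. This yields three integrals $I_1,I_2,I_3$, each amenable to the same saddle-point analysis (all three saddles project to $(p(\alpha),q(\alpha))$), and their contributions $-L_1(q(\alpha))$, $-L_2(p(\alpha))$, $+e^{p(\alpha)x_0+q(\alpha)y_0}$ assemble into $h^\alpha(z_0)$. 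You should adopt this three-term splitting; the rest of your argument (shift to the steepest descent path, noting $q(\alpha)<(1-\gamma)^2/2$ so no singularity is crossed, and a parameter-dependent Morse lemma for uniformity in $\alpha\to\alpha_0$) then goes through as in the paper.
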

The proof is done below in Section~\ref{subsec:proofthm}.

\begin{remark}[$h^0$ and $h^{\pi/2}$ as derivatives]
It should be noted that for $\alpha>0$ we have $h^\alpha \to 0$ when $\alpha\to 0$ (it derives from Proposition~\ref{prop:conditionedescapeproba} below). That is why we have defined $h^0$ differently. We can interpret $h^0$ 
as the $q$-derivative of $h^\alpha$ when $\alpha>0$ evaluated in $q(0)$,
i.e. $h^0= \left. \frac{\partial h^\alpha}{\partial q} \right|_{q(0)}$. 
A similar remark holds for $h^{\pi /2}$.
\label{rem:qder}
\end{remark}

\begin{figure}[hbtp]
\centering
\includegraphics[scale=0.4]{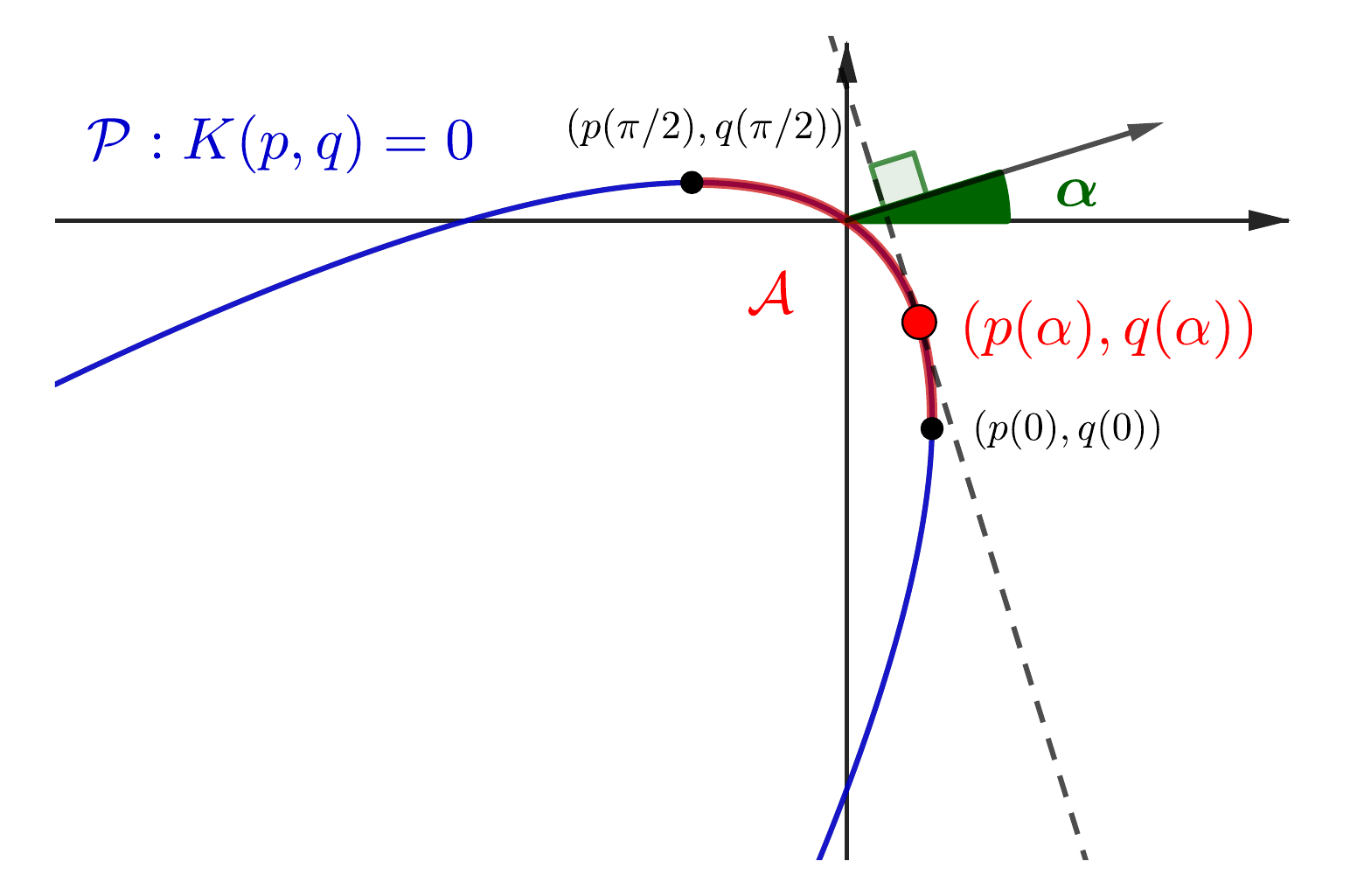}
\caption{The parabola $\mathcal{P}:=\{(p,q)\in\mathbb{R}_+^2 : K(p,q)=0 \}$ in blue and the arc of parabola $\overline{\mathcal{A}}:=\{(p(\alpha),q(\alpha)): \alpha\in[0,\pi/2] \} \subset \mathcal{P}$ which parametrizes the Martin's boundary in red   
}
\label{fig:pointcol}
\end{figure}

\subsection{Proof of Theorem~\ref{prop:asympt}}
\label{subsec:proofthm}

The proof of  Theorem~\ref{prop:asympt} follows the same steps as in \cite{ernst_franceschi_asymptotic_2021} and \cite{franceschi_kourkova_petit_2023}. We will first show asymptotics for the Green functions $g_{z_0}$~\eqref{eq:asymptmain}  and then asymptotics for the exit densities $f_1$ and $f_2$~\eqref{eq:asymptboundaries}.

\paragraph{From double to simple integral via the residue theorem}
The Laplace inversion formula, see \cite[Theorem 24.3 and 24.4]{doetsch_introduction_1974} and
\cite{brychkov_multidimensional_1992}, implies that for $\epsilon>0$ we have 

$$
g_{z_0}(z)= \frac{1}{(2i\pi)^2}
 \int_{-\epsilon-i\infty}^{-\epsilon+i\infty} \int_{-\epsilon-i\infty}^{-\epsilon+i\infty} L(p,q) e^{-px-qy} \mathrm{d}p\mathrm{d}q.
$$
We are now going to use the functional equation~\eqref{eq:eqfunc} and the residue theorem to transform the double integral into a sum of simple integrals.
\begin{lemma}[Reduction to simple integrals]
The Green's function satisfies $g_{z_0}(z)
= I_1+I_2+I_3$ where
\begin{equation}
\begin{cases}
I_1
:=
\frac{-1}{2i\pi}
\int_{-\epsilon-i\infty}^{-\epsilon+i\infty}
\frac{L_1(q) }{\sqrt{(1-\gamma)^2-2q}} e^{-P^+(q)x-qy}\mathrm{d}q,
\\
I_2:=
\frac{-1}{2i\pi}
\int_{-\epsilon-i\infty}^{-\epsilon+i\infty}
\frac{L_2(p) }{\sqrt{\gamma^2 -2p}} e^{-px-Q^+(p)y} \mathrm{d}p,
\\
I_3:=
\frac{1}{2i\pi}
\int_{-\epsilon-i\infty}^{-\epsilon+i\infty}
\frac{e^{p x_0+Q^+(p)y_0} }{\sqrt{\gamma^2 -2p}} e^{-px-Q^+(p)y}\mathrm{d}p.
\end{cases}
\label{eq:I1I2I3}
\end{equation}
\label{lem:I123}
\end{lemma}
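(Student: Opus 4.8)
The plan is to substitute the functional equation~\eqref{eq:eqfunc} into the double Bromwich integral and then reduce each resulting piece to a single integral via the residue theorem. Writing $L(p,q)=\bigl(L_1(q)+L_2(p)-e^{px_0+qy_0}\bigr)/K(p,q)$ and using linearity of the integral, I split $g_{z_0}(z)$ into three double integrals. The algebraic fact that drives everything is that the kernel factorises as a quadratic in each variable,
\begin{equation*}
K(p,q)=\tfrac12\bigl(p-P^+(q)\bigr)\bigl(p-P^-(q)\bigr)=\tfrac12\bigl(q-Q^+(p)\bigr)\bigl(q-Q^-(p)\bigr),
\end{equation*}
so that, for a fixed value of one variable, the integrand is meromorphic in the other with only the two simple poles furnished by the algebraic functions of~\eqref{eq:PQ}. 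The residues will produce the square roots through the identities $P^+(q)-P^-(q)=2\sqrt{(1-\gamma)^2-2q}$ and $Q^+(p)-Q^-(p)=2\sqrt{\gamma^2-2p}$.

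For each of the three pieces I would integrate first in the variable that appears in the numerator only through $K$, so that the inner integrand stays rational. For the $L_1(q)$ piece I integrate first in $p$: since $x>0$ the factor $e^{-px}$ decays in the right half-plane, I close the contour to the right and pick up only the pole $p=P^+(q)$, which is the root lying to the right of the line $\Re p=-\epsilon$ (this follows from $\Re P^-(q)<0$ for $\Re q<(1-\gamma)^2/2$ together with a direct estimate giving $\Re P^+(q)>-\epsilon$ there). The clockwise orientation contributes a factor $-1$ after the residue, and the calculation yields exactly the factor $1/\sqrt{(1-\gamma)^2-2q}$ and the exponential $e^{-P^+(q)x}$, leaving the single integral $I_1$. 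Symmetrically, for the $L_2(p)$ piece I integrate first in $q$, close to the right, collect the residue at $q=Q^+(p)$, and obtain $I_2$; this order is the important one because it keeps the $p$-contour on the line $\Re p=-\epsilon$, to the left of the branch cut $[\gamma^2/2,\infty)$ where $L_2$ is analytic, so no branch cut is crossed. For the third piece $-e^{px_0+qy_0}/K$ I again integrate first in $q$: for $z$ large enough that $y>y_0$ the factor $e^{q(y_0-y)}$ decays to the right, I close to the right and collect the residue at $q=Q^+(p)$, and recombining the exponentials produces $e^{px_0+Q^+(p)y_0}e^{-px-Q^+(p)y}$, which is precisely $I_3$.

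I expect the main obstacle to be the rigorous justification of these contour manipulations rather than the residue bookkeeping. The full double integrand is not absolutely integrable over the two lines: parametrising $p=-\epsilon+is$ and $q=-\epsilon+it$ one finds $\Re K=-\tfrac12(s-t)^2-\epsilon$ and $\Im K=(1-\gamma)s+\gamma t$, so along the direction $s=t$ one has $|K|\sim|s|$ while the numerator stays bounded, and Fubini cannot be invoked naively. The clean way around this is to read the Bromwich formula as a genuinely iterated integral: for a fixed value of the outer variable the inner integrand is $O(1/|p|^2)$ (resp. $O(1/|q|^2)$), hence absolutely convergent, and the semicircular arc of radius $R$ contributes $O(1/R)\to0$, so the residue theorem applies cleanly to each inner integral. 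The remaining points to control are the interchange of the three-term splitting with the integrations and the verification that the relevant pole lies on the correct side of the contour for \emph{every} value of the outer variable (not only near the real axis); both are handled using the uniform bounds $|L_1|,|L_2|\le1$ on the vertical lines together with their Riemann--Lebesgue decay, exactly as in~\cite{ernst_franceschi_asymptotic_2021,franceschi_kourkova_petit_2023}.
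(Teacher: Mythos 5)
Your proposal follows the same route as the paper: substitute the functional equation into the double Bromwich integral, split into three pieces, and for each piece close the inner contour to the right to collect the single residue at $P^+(q)$ (resp.\ $Q^+(p)$), the factor $\frac{1}{2}\bigl(P^+(q)-P^-(q)\bigr)=\sqrt{(1-\gamma)^2-2q}$ producing the stated square roots. Your extra care about the iterated (rather than absolutely convergent double) integral and about locating the poles relative to the line $\Re p=-\epsilon$ only fleshes out details the paper delegates to \cite{franceschi_kourkova_petit_2023,ernst_franceschi_asymptotic_2021}.
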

\begin{proof}
The following equalities are explained below.
\begin{align*}
g(z_0,z) &= \frac{1}{(2i\pi)^2}\int_{-\epsilon-i\infty}^{-\epsilon+i\infty} \int_{-\epsilon-i\infty}^{-\epsilon+i\infty}
\frac{L_1(q)+L_2(p)-e^{px_0+qy_0}}{K(p,q)}
e^{-px-qy} \mathrm{d}p\mathrm{d}q
\\ &=
\frac{1}{2i\pi}
\int_{-\epsilon-i\infty}^{-\epsilon+i\infty}
\frac{L_1(q)  }{\frac{1}{2}(P^+(q)-P^-(q))} e^{-P^+(q)x-qy}\mathrm{d}q
+
\frac{1}{2i\pi}
\int_{-\epsilon-i\infty}^{-\epsilon+i\infty}
\frac{L_2(p) }{\frac{1}{2}(Q^+(p)-Q^-(p))} e^{-px-Q^+(p)y} \mathrm{d}p
\\ &-
\frac{1}{2i\pi} \int_{-\epsilon-i\infty}^{-\epsilon+i\infty}
\frac{-e^{p x_0+Q^+(p)y_0} }{\frac{1}{2}(Q^+(p)-Q^-(p))} e^{-px-Q^+(p)y}\mathrm{d}p
\\ &=I_1+I_2+I_3
\end{align*}
The first equality above comes from the functional equation~\eqref{eq:eqfunc}. The second equality above comes from the residue theorem applied to an integration contour $\mathcal{C}_R$ represented in Figure~\ref{fig:intcont} to obtain $I_1$ (and a similar contour to obtain $I_2$ and $I_3$). In a classic way, the residue theorem implies that
$\frac{1}{2i\pi}\int_{\mathcal{C}_R}
\frac{L_1(q)}{K(p,q)}
e^{-px-qy} \mathrm{d}p = \frac{L_1(q)}{\frac{1}{2}(P^+(q)-P^-(q))} e^{-P^+(q)x-qy} $ since $P^+(q)$ is the only pole of the integrand inside $\mathcal{C}_R$.
Furthermore, the integral on the red half circle and the blue segments tends to $0$ when $R \to \infty$ and we obtain that $-\int_{-\epsilon-i\infty}^{-\epsilon+i\infty}
\frac{L_1(q)}{K(p,q)}
e^{-px-qy} \mathrm{d}p =\lim_{R\to\infty} \int_{\mathcal{C}_R}
\frac{L_1(q)}{K(p,q)}
e^{-px-qy} \mathrm{d}p$ taking into account the orientation of the contour. Details are omitted and one can refer to \cite[Lemma 4.1]{franceschi_kourkova_petit_2023} or \cite[Proposition 5.]{ernst_franceschi_asymptotic_2021} which perform similar calculations. 
The third equality just comes from the definition of $P^\pm$ and $Q^\pm$ in~\eqref{eq:PQ}.
\begin{figure}[hbtp]
\centering
\includegraphics[scale=0.13]{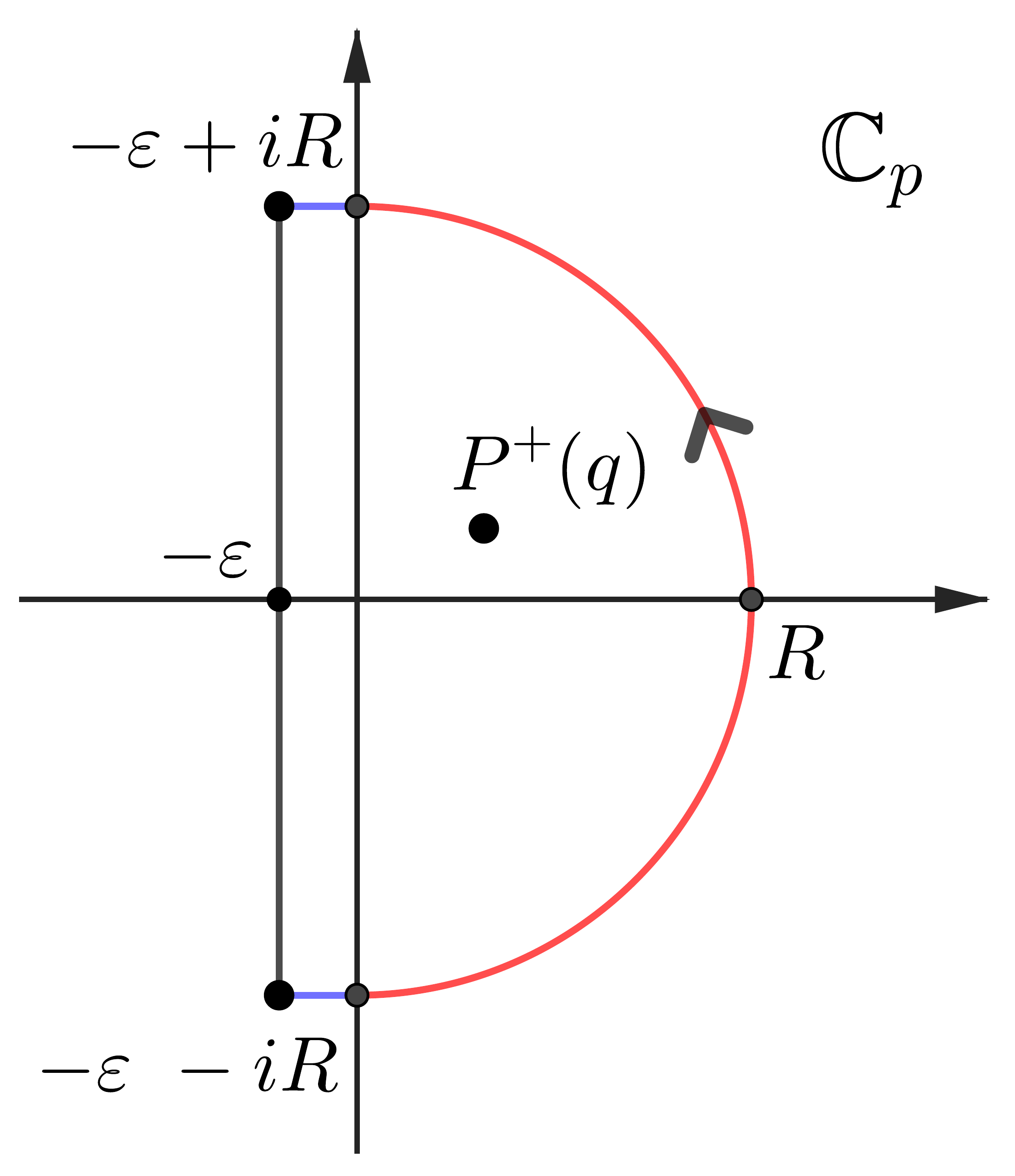}
\caption{Integration contour $\mathcal{C}_R$ and $Q^+(p)$ the only pole of the integrand inside $\mathcal{C}_R$}
\label{fig:intcont}
\end{figure}

\end{proof}

\paragraph{Steepest descent method}
We are going to denote  by $(r,\alpha)\in\mathbb{R}_+\times [0,\pi/2]$ the polar coordinate of the point $z=(x,y)\in \mathbb{R}_+^2$, i.e. $(x,y)=(r\cos\alpha,r \sin \alpha)$ and $(r,\alpha)=(\sqrt{x^2+y^2}, \arctan (y/x))$.
Let us define the functions
$$
F(p,\alpha):= p \cos \alpha +Q^+(p) \sin \alpha
\quad
\text{and}
\quad
G(q,\alpha):= P^+(q) \cos \alpha +q \sin \alpha.
$$
The point $(p(\alpha),q(\alpha))$ defined in~\eqref{eq:saddlepoint} is a saddle point for these functions. Indeed, 
for $\alpha\in (0,\pi/2)$ 
the first derivatives (according to $p$ for $F$ and $q$ for $G$) satisfy
$$
F'_p(p(\alpha),\alpha)=G'_q(q(\alpha),\alpha)=0,
$$
and the second derivatives satisfy
\begin{equation}
F''_p(p(\alpha),\alpha)
=-\frac{(\cos\alpha+\sin\alpha)^3}{\sin^2\alpha}
< 0, \quad 
G''_q(q(\alpha),\alpha)=-\frac{(\cos\alpha+\sin\alpha)^3}{\cos^2\alpha}< 0.
\label{eq:F''}
\end{equation}

\begin{lemma}[Asymptotics of $I_1$] 
When $r\to\infty$ and $\alpha\to\alpha_0$ the following asymptotics holds:
$$
I_1  = - L_1(q(\alpha_0))
\frac{e^{-z\cdot (p(\alpha_0),q(\alpha_0))}}{\sqrt{2\pi r (\cos\alpha_0+\sin\alpha_0) }}
+o\left( \frac{1}{\sqrt{r}} e^{-z\cdot (p(\alpha_0),q(\alpha_0))} \right) .
$$
Similar asymptotics hold for $I_2$ and $I_3$. 
\label{lem:asymptI1}
\end{lemma}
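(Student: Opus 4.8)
The plan is to apply the classical steepest descent (saddle point) method to the contour integral defining $I_1$. Recall
$$
I_1 = \frac{-1}{2i\pi}\int_{-\epsilon-i\infty}^{-\epsilon+i\infty}
\frac{L_1(q)}{\sqrt{(1-\gamma)^2-2q}}\, e^{-P^+(q)x-qy}\,\mathrm{d}q,
$$
and that with $(x,y)=(r\cos\alpha, r\sin\alpha)$ the exponent is $-r\, G(q,\alpha)$ with $G(q,\alpha)=P^+(q)\cos\alpha+q\sin\alpha$. First I would rewrite $I_1 = \frac{-1}{2i\pi}\int \varphi(q)\, e^{-rG(q,\alpha)}\,\mathrm{d}q$, isolating the slowly varying amplitude $\varphi(q)=L_1(q)/\sqrt{(1-\gamma)^2-2q}$ from the rapidly oscillating/decaying factor $e^{-rG(q,\alpha)}$. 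The idea is that as $r\to\infty$ the integral concentrates near the saddle point $q(\alpha)$, where $G'_q(q(\alpha),\alpha)=0$, so the leading contribution comes from a neighbourhood of that point.

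The key technical step is to deform the vertical integration line $\Re q=-\epsilon$ onto the steepest descent path through the saddle point $q(\alpha)$ without crossing any singularity of the integrand. This requires checking that $L_1$ is analytic (already meromorphically continued to $\Re q<(1-\gamma)^2/2$ earlier in the excerpt) and that $q(\alpha)$ lies in the admissible region, together with a Cauchy/Jordan-type argument that the contributions of the connecting horizontal segments vanish as they are pushed to $\pm i\infty$. Once the contour passes through the saddle, I would localize: writing $q=q(\alpha)+i t$ along the steepest descent direction, expand $G(q,\alpha)=G(q(\alpha),\alpha)+\tfrac{1}{2}G''_q(q(\alpha),\alpha)(q-q(\alpha))^2+\cdots$, and note $G(q(\alpha),\alpha)=z\cdot(p(\alpha),q(\alpha))/r$ by definition of the saddle point in~\eqref{eq:saddlepoint}. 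Using the value $G''_q(q(\alpha),\alpha)=-(\cos\alpha+\sin\alpha)^3/\cos^2\alpha$ from~\eqref{eq:F''}, the resulting Gaussian integral $\int e^{-\frac{r}{2}|G''_q| t^2}\,\mathrm{d}t=\sqrt{2\pi/(r|G''_q|)}$ produces the prefactor, and after simplification (noting $\varphi(q(\alpha))=L_1(q(\alpha))/\sqrt{(1-\gamma)^2-2q(\alpha)}$ and that $\sqrt{(1-\gamma)^2-2q(\alpha)}$ combines with the Jacobian factors) one recovers exactly $-L_1(q(\alpha))\, e^{-z\cdot(p(\alpha),q(\alpha))}/\sqrt{2\pi r(\cos\alpha+\sin\alpha)}$.

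The main obstacle I anticipate is twofold. First, the uniformity as $\alpha\to\alpha_0$: the statement allows $\alpha$ to vary with $r$, so I must control the saddle point location, the curvature $G''_q$, and the steepest descent contour continuously in $\alpha$ near $\alpha_0$, ensuring the error term is genuinely $o(r^{-1/2})$ uniformly rather than merely for fixed $\alpha$. Second, justifying that the tails of the steepest descent integral (away from the saddle) are exponentially negligible relative to the leading term, and that the amplitude $\varphi$ does not itself blow up or vanish along the relevant portion of the contour; in particular one must verify $\varphi$ has no singularity between the original line and the steepest descent path and that the branch of $\sqrt{(1-\gamma)^2-2q}$ is handled consistently. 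Since the same saddle point structure and estimates appear in \cite{ernst_franceschi_asymptotic_2021} and \cite{franceschi_kourkova_petit_2023}, I would transport their uniform steepest descent estimates to the present kernel, reducing the argument to verifying the three analytic inputs above (analyticity of $\varphi$, the sign and value of $G''_q$, and contour admissibility) and then citing the standard Laplace-type asymptotic expansion. The analogous asymptotics for $I_2$ and $I_3$ follow identically with $F$ in place of $G$, the saddle contributing the same exponential rate $z\cdot(p(\alpha),q(\alpha))$; for $I_3$ the amplitude $e^{px_0+Q^+(p)y_0}/\sqrt{\gamma^2-2p}$ evaluated at $p(\alpha)$ supplies the $e^{p(\alpha)x_0+q(\alpha)y_0}$ term, so that summing $I_1+I_2+I_3$ assembles the harmonic function $h^\alpha(z_0)$ of~\eqref{eq:halpha} as the total prefactor.
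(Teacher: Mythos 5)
Your proposal follows essentially the same route as the paper: isolate the amplitude $L_1(q)/\sqrt{(1-\gamma)^2-2q}$ from the phase $e^{-rG(q,\alpha)}$, deform the contour onto the steepest descent path through $q(\alpha)$ (licit because $q(\alpha)<(1-\gamma)^2/2$ keeps the meromorphic continuation of $L_1$ singularity-free), apply the local Gaussian approximation with $G''_q(q(\alpha),\alpha)=-(\cos\alpha+\sin\alpha)^3/\cos^2\alpha$, and handle the joint limit $\alpha\to\alpha_0$ by the parameter-dependent (Morse-lemma) uniform estimates imported from \cite{ernst_franceschi_asymptotic_2021,franceschi_kourkova_petit_2023}. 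The algebraic simplification you describe is exactly the paper's identity $\frac{1}{2\pi}\frac{1}{\sqrt{(1-\gamma)^2-2q(\alpha)}}\sqrt{-2\pi/G''_q(q(\alpha))}=1/\sqrt{2\pi(\cos\alpha+\sin\alpha)}$, so the argument is correct and matches the paper's proof.
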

\begin{proof}
The integral $I_1$ is typical to apply the classical saddle point method. A famous reference is Fedoryuk's book
\cite[\S 4]{fedoryuk_asymptotic_1989}.
Starting from~\eqref{eq:I1I2I3} we then obtain
\begin{align*}
I_1
&=\frac{-1}{2i\pi} \int_{-\epsilon-i\infty}^{-\epsilon+i\infty}
\frac{L_1(q) }{\sqrt{(1-\gamma)^2-2q}} e^{-rG(q,\alpha)}\mathrm{d}q
\\
&
\underset{r\to\infty \atop \alpha\to\alpha_0}{=} \frac{-1}{2i\pi} \frac{L_1(q(\alpha_0)) }{\sqrt{(1-\gamma)^2-2q(\alpha_0)}}
i\sqrt{\frac{-2\pi}{r G''_q (q(\alpha_0))}}
e^{-r G(q(\alpha_0),\alpha_0)}
+o\left( \frac{1}{\sqrt{r}} e^{-r G(q(\alpha_0),\alpha_0)} \right) .
\end{align*}
Let us give some details about the application of this method.
To obtain the asymptotics we shift the integration contour onto the steepest descent line which passes through the saddle point $q(\alpha)$. This is possible since the integrand is a meromorphic function and we do not cross the only pole since $q(\alpha)<(1-\gamma)^2/2$ for $\alpha\in(0,\pi/2)$, see~\eqref{eq:saddlepoint}. Then, the asymptotics derives from the classical Laplace's method, see for example~\cite[Lemma 16]{ernst_franceschi_asymptotic_2021}. To handle the convergence in $\alpha\to\alpha_0$ we just apply a simple parameter-dependent Morse lemma, see \cite[Appendix A]{franceschi_kourkova_petit_2023} which details this technique.
To conclude, a straightforward calculation using~\eqref{eq:saddlepoint2} and~\eqref{eq:F''} shows that
$$
\frac{1}{2\pi} \frac{1 }{\sqrt{(1-\gamma)^2-2q(\alpha)}}
\sqrt{\frac{-2\pi}{ G''_q (q(\alpha))}}
=
\frac{1}{\sqrt{2\pi  (\cos\alpha+\sin\alpha) }}
$$ 
and it just remains to remark that $r G(q(\alpha),\alpha)=z\cdot (p(\alpha),q(\alpha))$.
\end{proof}

\begin{proof}[Proof of~\eqref{eq:asymptmain}]
The main asymptotics~\eqref{eq:asymptmain} of  Theorem~\ref{prop:asympt} directly derives from Lemmas~\ref{lem:I123} and~\ref{lem:asymptI1}. One will compute more explicitly $h^\alpha(z_0)=e^{p(\alpha) x_0+q(\alpha) y_0}-L_1(q(\alpha))-L_2(p(\alpha))$ in the following section which will show that this function is non-zero for $\alpha\in(0,\pi/2)$.
\end{proof}


\paragraph{Asymptotics on the boundaries via Tauberian lemmas}

\begin{lemma}[Branching point behaviour]
When $q\to q^+ := q(\pi/2)=(1-\gamma)^2/2 $ we have
$$
L_1(q)=L_1(q^+)-
(x_0 e^{P^-(q^+) x_0 +q^+ y_0} -L_2' (P^-(q^+))) \sqrt{2(q^+-q)} +  o(\sqrt{q^+-q})
$$
and a symmetrical result holds for $L_2$.
\label{lem:branch}
\end{lemma}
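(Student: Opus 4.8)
The plan is to read off the local behaviour of $L_1$ directly from its meromorphic continuation. Recall from the paragraph following the functional equation that for $\Re q < (1-\gamma)^2/2$ one has the explicit formula
$$
L_1(q)= e^{P^-(q) x_0 +q y_0} - L_2(P^-(q)),
$$
and observe that $q^+=(1-\gamma)^2/2$ is precisely the branch point of $P^-$, since the square root $\sqrt{(1-\gamma)^2-2q}$ in $P^-$ degenerates there. The whole singular behaviour of $L_1$ at $q^+$ therefore comes from this square root, and the proof is a Puiseux expansion of the right-hand side around $q^+$.

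First I would isolate the square-root behaviour of $P^-$. Writing $(1-\gamma)^2-2q=2(q^+-q)$ and setting $p^-:=P^-(q^+)=\gamma-1+q^+$, one gets
$$
P^-(q)=p^- -\sqrt{2(q^+-q)}-(q^+-q)=p^- -\sqrt{2(q^+-q)}+O(q^+-q)
$$
as $q\to q^+$ from below. Next I would expand the two terms of $L_1$ separately. The exponential is a smooth function of $P^-(q)$ and $q$, so
$$
e^{P^-(q) x_0 +q y_0}=e^{p^- x_0 + q^+ y_0}\bigl(1-x_0\sqrt{2(q^+-q)}+O(q^+-q)\bigr)=e^{p^- x_0 + q^+ y_0}-x_0 e^{p^- x_0 + q^+ y_0}\sqrt{2(q^+-q)}+o(\sqrt{q^+-q}).
$$
For the second term I must check that $L_2$ is analytic at $p^-$: indeed $p^-=\gamma-1+(1-\gamma)^2/2$ is real and a one-line computation gives $p^-<\gamma^2/2$, so $p^-$ lies in the half-plane $\{\Re p<\gamma^2/2\}$ to which $L_2$ was analytically continued. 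A first-order Taylor expansion then yields
$$
L_2(P^-(q))=L_2(p^-)-L_2'(p^-)\sqrt{2(q^+-q)}+o(\sqrt{q^+-q}).
$$

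Finally I would subtract, recognising $L_1(q^+)=e^{p^- x_0 + q^+ y_0}-L_2(p^-)$ as the constant term; the coefficient of $\sqrt{2(q^+-q)}$ is then exactly $-\bigl(x_0 e^{P^-(q^+) x_0 + q^+ y_0}-L_2'(P^-(q^+))\bigr)$, which is the claimed expansion. The symmetric statement for $L_2$ follows verbatim by exchanging the roles of $(p,q)$ and of $(P^\pm,Q^\pm)$, using the continuation $L_2(p)=e^{x_0 p+y_0 Q^-(p)}-L_1(Q^-(p))$ near $p^+:=p(0)=\gamma^2/2$. The only point requiring genuine care is the bookkeeping of the remainders: one must confirm that the analytic/smooth corrections enter at order $O(q^+-q)=o(\sqrt{q^+-q})$ and hence do not pollute the $\sqrt{q^+-q}$ coefficient; this is immediate once the Puiseux expansion of $P^-$ above is in hand, so I do not expect any substantial obstacle beyond the analyticity verification $p^-<\gamma^2/2$.
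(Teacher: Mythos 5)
Your proposal is correct and follows essentially the same route as the paper: both start from the continuation formula $L_1(q)=e^{P^-(q)x_0+qy_0}-L_2(P^-(q))$ (obtained by evaluating the functional equation on the branch $p=P^-(q)$ of the kernel) and perform a first-order expansion driven by $P^-(q)-P^-(q^+)=-\sqrt{2(q^+-q)}+O(q^+-q)$. Your version is slightly more careful than the paper's on two points it leaves implicit — the verification $P^-(q^+)=\gamma^2/2-1/2<\gamma^2/2$ ensuring $L_2$ is analytic there, and the bookkeeping showing the $O(q^+-q)$ corrections do not affect the $\sqrt{q^+-q}$ coefficient — but the argument is the same.
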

\begin{proof}
Evaluating 
\eqref{eq:eqfunc} at $(P^-(q),q)$ and making a Taylor series expansion when $q\to q^+ $ we obtain 
\begin{align*}
L_1(q) &= e^{P^-(q) x_0 +q y_0} -L_2(P^-(q))
\\ &= 
L_1(q^+)+
(x_0 e^{P^-(q^+) x_0 +q^+ y_0} -L_2' (P^-(q^+))) (P^-(q)) - P^-(q^+)) +  o(P^-(q)) - P^-(q^+))
\end{align*}
and it just remains to notice that
$P^-(q)) - P^-(q^+)= - \sqrt{2(q^+-q)}$.
\end{proof}

\begin{proof}[Proof of~\eqref{eq:asymptboundaries}] Lemma~\ref{lem:branch} and classical Tauberian inversion 
lemmas
\cite[Theorem 37.1]{doetsch_introduction_1974},
\cite[Lemma C.2]{dai_reflecting_2011},
imply that

$$
f_1(y)\underset{y\to\infty}{\sim} (x_0 e^{P^-(q^+) x_0 +q^+ y_0} -L_2' (P^-(q^+)))\frac{-\sqrt{2}}{\Gamma(-1/2)} y^{-3/2}e^{-q^+ y}
$$
and we deduce the asymptotics~\eqref{eq:asymptboundaries} of Theorem~\ref{prop:asympt} noticing that $\Gamma(-1/2)=-2\sqrt{\pi}$.
\end{proof}

\section{Martin boundary and persistence probabilities}
\label{sec:mart}

\subsection{Doob's $h$-transform and persistence}

In this short section, we give a probabilist interpretation of the harmonic functions $h^\alpha$, found in Theorem~\ref{prop:asympt}, in term of the persistence probabilities of the process conditioned to drift in a given direction.

For $\alpha\in[0,\pi/2]$ we consider the probability $\mathbb{P}^\alpha$ defined by the Doob's $h$-transform for the harmonic function $e^{xp(\alpha)+y q(\alpha)}$ associated to the process $Z(t)=z_0+\mathbf{W}(t)+\mu t$.
We have
$$
\mathbb{P}^\alpha (Z(t) \in S)=\mathbb{E} ( e^{(p(\alpha),q(\alpha)) \cdot (\mathbf{W}(t)+\mu t)} \mathds{1}_{Z(t) \in S})
$$
for a set $S\subset \mathbb{R}^2$. 
Under $\mathbb{P}^\alpha$ the process $Z(t)$ is a Markov process of transition kernel 
$$\widetilde P_t(z_0,z)=e^{(x-x_0)p(\alpha)+(y-y_0) q(\alpha)}P_t(z_0,z)$$
where $P_t$ is the transition kernel of $Z(t)$ under $\mathbb{P}$. Under $\mathbb{P}^\alpha$ the process $Z(t)$ is conditioned to drift in the direction $\alpha$ (in the sense of the Doob's $h$-transform).

We introduce the corresponding modified Green's measure, for $S\subset \mathbb{R}_+^2$ we set
$$
G^\alpha_{z_0}(S):=\int_0^\infty \mathbb{P}^\alpha(Z(t)\in S ,t<T)\mathrm{d}t
$$
and its density
$$
g^\alpha_{z_0} (z):=e^{(x-x_0)p(\alpha)+(y-y_0) q(\alpha)} g_{z_0}(z).
$$
We define $L^\alpha$ the Laplace transform relative to these modified Green's functions and we have
$$
L^\alpha (p,q)=L(p+p(\alpha),q+q(\alpha)) e^{-x_0 p(\alpha)-y_0 q(\alpha)}.
$$
On the same way, we introduce the modified exit measures
$$
A_1^\alpha(S ) :=\mathbb{P}^\alpha(Z(T_1)\in S , T_1<T_2)
\quad\text{and}\quad
A_2^\alpha(S) :=\mathbb{P}^\alpha(Z(T_2)\in S , T_2<T_1)$$
and their densities 
$$f_1^\alpha(y):=e^{-x_0 p(\alpha)+(y-y_0) q(\alpha)} f_1(y)
\quad\text{and}\quad
f_2^\alpha (x):= e^{(x-x_0)p(\alpha)-y_0 q(\alpha)} f_2(x).$$
We denote their Laplace transforms $L_1^\alpha$ and $L_2^\alpha$ and we have
$$
L_1^\alpha (q)=L_1(q+q(\alpha)) e^{-x_0 p(\alpha)-y_0 q(\alpha)},
\quad
L_2^\alpha (p)=L_2(p+p(\alpha)) e^{-x_0 p(\alpha)-y_0 q(\alpha)}
.
$$
One may remark that
$$
L_1^\alpha(0)=  \mathbb{P}^\alpha(T_1<T_2) ,
\quad
L_2^\alpha(0)= \mathbb{P}^\alpha(T_2<T_1),
\quad
L_1^\alpha(0)+L_2^\alpha(0)=\mathbb{P}^\alpha(T<\infty).
$$

\begin{proposition}[Conditioned persistence probabilities]
Let $z_0=(x_0,y_0)$, for $\alpha\in (0,\pi)$ we have
\begin{equation}
h^{\alpha} (z_0)= e^{x_0 p(\alpha)+y_0 q(\alpha)} \mathbb{P}^\alpha(T=\infty)
=e^{x_0 p(\alpha)+y_0 q(\alpha)} - \mathbb{E} (e^{X_T p(\alpha)+Y_T q(\alpha)} \mathds{1}_{T<\infty}) 
\label{eq:escapeprobah}  
\end{equation}
and 
$$
h^0(z_0)=y_0e^{x_0 p(0) +y_0 q(0)}- \mathbb{E}\left(Y_{T_1} e^{q(0) Y_{T_1}} \mathds{1}_{T_1<T_2} \right),
\quad
h^{\pi/2}(z_0)=x_0e^{x_0 p(\pi/2) +y_0 q(\pi/2)}- \mathbb{E}\left(X_{T_2} e^{p(\pi/2) X_{T_2}} \mathds{1}_{T_2<T_1} \right)
.
$$
When $x_0\to\infty $ and $y_0\to\infty$
we obtain
\begin{equation}
h^\alpha(z_0) \sim e^{x_0 p(\alpha)+y_0 q(\alpha)},
\quad
h^0(z_0) \sim y_0e^{x_0 p(0) +y_0 q(0)} 
\quad \text{and} \quad
h^{\pi/2}(z_0) \sim x_0 e^{x_0 p(\pi/2)} .
\label{eq:asymptz0}
\end{equation}
\label{prop:conditionedescapeproba}
\end{proposition}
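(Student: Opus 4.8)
The plan is to build everything on the exponential martingale $M_t := e^{p(\alpha)X_t + q(\alpha)Y_t}$ underlying the $h$-transform $\mathbb{P}^\alpha$. The key starting observation is that, since $X_t = x_0 - W_t + (1-\gamma)t$ and $Y_t = y_0 + W_t + \gamma t$, one has $M_t = M_0\, e^{(q(\alpha)-p(\alpha))W_t + (p(\alpha)(1-\gamma)+q(\alpha)\gamma)t}$ with $M_0 = e^{x_0 p(\alpha)+y_0 q(\alpha)}$, and the condition for the drift term to cancel the It\^o correction (making this a genuine $\mathbb{P}$-martingale) is exactly $K(p(\alpha),q(\alpha))=0$, which holds because the saddle point lies on the kernel curve~\eqref{eq:saddlepoint}. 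Thus $M$ is a true exponential martingale and $\mathrm{d}\mathbb{P}^\alpha/\mathrm{d}\mathbb{P}|_{\mathcal{F}_t}=M_t/M_0$.

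First I would prove the second equality of~\eqref{eq:escapeprobah}. On $\{T_1<T_2\}$ one has $X_{T_1}=0$ and on $\{T_2<T_1\}$ one has $Y_{T_2}=0$, so $e^{q(\alpha)Y_{T_1}}=e^{p(\alpha)X_{T_1}+q(\alpha)Y_{T_1}}$ and $e^{p(\alpha)X_{T_2}}=e^{p(\alpha)X_{T_2}+q(\alpha)Y_{T_2}}$. Summing the definitions of $L_1(q(\alpha))$ and $L_2(p(\alpha))$ (the ties $T_1=T_2$ having probability zero) gives $L_1(q(\alpha))+L_2(p(\alpha))=\mathbb{E}(e^{p(\alpha)X_T+q(\alpha)Y_T}\mathds{1}_{T<\infty})$, and comparison with~\eqref{eq:halpha} yields $h^\alpha(z_0)=M_0-\mathbb{E}(e^{p(\alpha)X_T+q(\alpha)Y_T}\mathds{1}_{T<\infty})$. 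For the first equality I would apply optional stopping at the bounded time $t\wedge T$ to get $\mathbb{E}(M_{t\wedge T})=M_0$, split $M_{t\wedge T}=M_t\mathds{1}_{T>t}+M_T\mathds{1}_{T\le t}$, and let $t\to\infty$: monotone convergence gives $\mathbb{E}(M_T\mathds{1}_{T\le t})\to\mathbb{E}(M_T\mathds{1}_{T<\infty})$, hence $\mathbb{E}(M_t\mathds{1}_{T>t})\to M_0-\mathbb{E}(M_T\mathds{1}_{T<\infty})=h^\alpha(z_0)$, while the change of measure gives $\mathbb{E}(M_t\mathds{1}_{T>t})=M_0\,\mathbb{P}^\alpha(T>t)\to M_0\,\mathbb{P}^\alpha(T=\infty)$, closing the chain. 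The boundary identities for $h^0$ and $h^{\pi/2}$ are simpler: since $q(0)<(1-\gamma)^2/2$ and $p(\pi/2)<\gamma^2/2$ lie strictly inside the domains of analyticity, one may differentiate under the expectation to obtain $L_1'(q(0))=\mathbb{E}(Y_{T_1}e^{q(0)Y_{T_1}}\mathds{1}_{T_1<T_2})$ and symmetrically for $L_2'$, then substitute into the definitions of $h^0,h^{\pi/2}$.

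For the asymptotics~\eqref{eq:asymptz0} I would exploit the representations just obtained. Under $\mathbb{P}^\alpha$ the Girsanov shift turns $X$ and $Y$ into one-dimensional Brownian motions whose drifts are, up to a positive scalar, the components of $\nabla K(p(\alpha),q(\alpha))$; the Lagrange condition characterizing the saddle point forces this gradient to be a positive multiple of $(\cos\alpha,\sin\alpha)$, so both drifts $d_X(\alpha),d_Y(\alpha)$ are strictly positive for $\alpha\in(0,\pi/2)$. A one-sided first-passage estimate then bounds $\mathbb{P}^\alpha(T<\infty)\le e^{-2d_X(\alpha)x_0}+e^{-2d_Y(\alpha)y_0}\to 0$, so $\mathbb{P}^\alpha(T=\infty)\to 1$ and $h^\alpha(z_0)\sim e^{x_0 p(\alpha)+y_0 q(\alpha)}$. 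For $h^0$, the same $h$-transform (now with a driftless $Y$ and a positive-drift $X$) recasts the identity as $h^0(z_0)/e^{x_0 p(0)+y_0 q(0)}=y_0-\mathbb{E}^0(Y_{T_1}\mathds{1}_{T_1<T_2})$, and it remains to show the expectation is $o(y_0)$; this holds because $\mathbb{P}^0(T_1<\infty)$ is exponentially small in $x_0$, making $\mathbb{E}^0(Y_{T_1}\mathds{1}_{T_1<T_2})$ exponentially small in $x_0$ and hence negligible against $y_0$. The case $h^{\pi/2}$ is symmetric.

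The algebraic identities are essentially immediate once the martingale is in hand; the real work lies in Part~3. The main obstacles are establishing strict positivity of the conditioned drifts from the saddle-point/Lagrange geometry, and controlling the exit-density term $\mathbb{E}^0(Y_{T_1}\mathds{1}_{T_1<T_2})$ for the degenerate direction $\alpha=0$, where $Y$ loses its drift. The exponential decay of first-passage probabilities for Brownian motion against a linear barrier is what renders these terms negligible, and assembling those estimates carefully — together with the limit/expectation interchange in the optional-stopping step — is where I expect the bulk of the technical care to be required.
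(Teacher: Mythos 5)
Your proof takes essentially the same route as the paper: the identity $h^\alpha(z_0)=e^{x_0p(\alpha)+y_0q(\alpha)}\,\mathbb{P}^\alpha(T=\infty)$ is obtained by recognizing $L_1(q(\alpha))+L_2(p(\alpha))=\mathbb{E}(e^{p(\alpha)X_T+q(\alpha)Y_T}\mathds{1}_{T<\infty})$ (using $X_{T_1}=0$, $Y_{T_2}=0$) and then applying the Doob $h$-transform — the paper simply invokes the relations $L_1^\alpha(0)=\mathbb{P}^\alpha(T_1<T_2)$, $L_2^\alpha(0)=\mathbb{P}^\alpha(T_2<T_1)$ established just before the proposition, which is exactly what your exponential-martingale/optional-stopping step re-derives; the boundary cases come from differentiating $L_1,L_2$ under the expectation, and the asymptotics from $\mathbb{P}^\alpha(T=\infty)\to1$, which you justify (more explicitly than the paper) via the positivity of the conditioned drifts at the saddle point.

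The one place where your variant needs a little more care is the asymptotics of $h^0$. The paper bounds the unconditioned quantity directly: since $Y_{T_1}=t_0+T_1>y_0$, $q(0)<0$, and $u\mapsto u e^{q(0)u}$ is decreasing for $u$ large, one gets $\mathbb{E}(Y_{T_1}e^{q(0)Y_{T_1}}\mathds{1}_{T_1<T_2})\leq y_0 e^{q(0)y_0}$, which is $o(y_0e^{x_0p(0)+y_0q(0)})$ because $p(0)=\gamma^2/2>0$ and $x_0\to\infty$. Your route through $\mathbb{E}^0(Y_{T_1}\mathds{1}_{T_1<T_2})$ is correct in principle, but exponential smallness of $\mathbb{P}^0(T_1<T_2)$ alone does not control the expectation of the unbounded variable $Y_{T_1}=t_0+T_1$; you would need an additional moment estimate (e.g.\ Cauchy--Schwarz with a second-moment bound on $T_1$ under $\mathbb{P}^0$). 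This is a fixable detail rather than a flaw in the strategy, and the paper's deterministic bound is the quicker way to close it.
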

\begin{proof}
By Theorem~\ref{prop:asympt}, we have
\begin{align*}
h^\alpha(z_0)&= e^{p(\alpha) x_0+q(\alpha) y_0}-L_1(q(\alpha))-L_2(p(\alpha))
\\
&= e^{p(\alpha) x_0+q(\alpha)y_0}(1-L_1^\alpha(0)-L_2^\alpha(0))
\\
&= e^{p(\alpha) x_0+q(\alpha)y_0}(1-\mathbb{P}^\alpha(T_1<T_2)-\mathbb{P}^\alpha(T_2<T_1))
\\
&= e^{p(\alpha) x_0+q(\alpha)y_0}\mathbb{P}^\alpha (T=\infty) 
\\
&= e^{p(\alpha) x_0+q(\alpha)y_0}-e^{p(\alpha) x_0+q(\alpha)y_0}\mathbb{P}^\alpha (T<\infty) 
\\ &=
e^{x_0 p(\alpha)+y_0 q(\alpha)} - \mathbb{E} (e^{X_T p(\alpha)+Y_T q(\alpha)} \mathds{1}_{T<\infty}) 
,
\end{align*}
and also
\begin{align*}
h^0(z_0) &=y_0 e^{x_0 p(0) +y_0 q(0)}-  L_1'(q(0))
\\ &=
y_0 e^{x_0 p(0) +y_0 q(0)}- 
\mathbb{E}\left(Y_{T_1} e^{q(0) Y_{T_1}} \mathds{1}_{T_1<T_2} \right) .
\end{align*}
We now assume that $x_0\to\infty $ and $y_0\to\infty$. Noticing that $\mathbb{P}^\alpha(T=\infty) {\to} 1$ we obtain the asymptotics for $h^\alpha$ when $\alpha\in(0,\pi/2)$. Remembering that $p(0)>0$, $q(0)<0$ and that $Y_{T_1}>y_0$, we see that $\mathbb{E}\left(Y_{T_1} e^{q(0) Y_{T_1}} \mathds{1}_{T_1<T_2} \right)=o(y_0 e^{x_0 p(0) +y_0 q(0)})$ and the asymptotics of $h^0$ follows. A symmetrical proof holds for $h^{\pi/2}$.
\end{proof}
These asymptotics will be useful in the proof of Theorem~\ref{prop:minimalharmonic} to show that the Martin boundary is minimal. 

\subsection{Martin boundary}

We refer to the classic book by Doob~\cite{Doob2001} which gives a comprehensive presentation of Martin boundary theory.
\begin{theorem}[Martin boundary]
For $\alpha\in[0,\pi/2]$, the function $z_0\mapsto h^\alpha(z_0)$ is harmonic for the process $Z$, i.e. $\mathcal{G} h^\alpha =0 $ inside the quarter plane and $h^\alpha =0$ on its boundaries. The Martin boundary of the process $Z$ killed outside of the quarter plane is denoted $\partial_M^{\mathbb{R}_+^2} Z$ and is homeomorphic to $[0,\pi/2]$ through the map $\Phi$ given by
\renewcommand{\arraystretch}{1.7}
$$
\begin{array}{ccccc}
 \Phi : [0,\frac{\pi}{2}] & \longrightarrow & \partial_M^{\mathbb{R}_+^2} Z \\
 \alpha & \longmapsto & \displaystyle \frac{h^\alpha (\cdot)}{h^\alpha (1,1)} . \\
\end{array}
$$
\label{cor:martinboundary}
\end{theorem}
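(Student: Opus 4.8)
The plan is to realise the map $\Phi$ through the convergence of Martin kernels, using the asymptotics of Theorem~\ref{prop:asympt}. Recall that, as a function of the \emph{starting} point, the Green's density $z_0\mapsto g_{z_0}(z)$ is $\mathcal{G}$-harmonic on $(\mathbb{R}_+^*)^2\setminus\{z\}$ and vanishes on $\partial\mathbb{R}_+^2$ (a process started on the boundary is killed instantly), so the normalised Martin kernel $k_z(z_0):=g_{z_0}(z)/g_{(1,1)}(z)$ is harmonic away from $z$ and vanishes on the boundary. By definition the Martin boundary $\partial_M^{\mathbb{R}_+^2}Z$ is the set of limit points of $k_{z_n}$ along sequences $z_n\to\infty$, with the topology of local uniform convergence. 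The task splits into showing $\Phi([0,\pi/2])\subseteq\partial_M^{\mathbb{R}_+^2}Z$, the reverse inclusion, and that $\Phi$ is a homeomorphism onto its image.

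First I would establish convergence of the Martin kernels along each direction. For $z_n\to\infty$ with $y_n/x_n\to\tan\alpha_0$ and $\alpha_0\in(0,\pi/2)$, I divide the asymptotic \eqref{eq:asymptmain} written for $g_{z_0}(z_n)$ by the same asymptotic written for $g_{(1,1)}(z_n)$; the common factors $|z_n|^{-1/2}e^{-z_n\cdot(p(\alpha_0),q(\alpha_0))}/\sqrt{2\pi(\cos\alpha_0+\sin\alpha_0)}$ cancel and leave
$$
k_{z_n}(z_0)\xrightarrow[n\to\infty]{}\frac{h^{\alpha_0}(z_0)}{h^{\alpha_0}(1,1)}=\Phi(\alpha_0)(z_0),
$$
the denominator being nonzero since $h^{\alpha_0}(1,1)=e^{p(\alpha_0)+q(\alpha_0)}\mathbb{P}^{\alpha_0}(T=\infty)>0$ by Proposition~\ref{prop:conditionedescapeproba}. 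As the convergence is locally uniform in $z_0$ and each $k_{z_n}$ is harmonic, the limit $\Phi(\alpha_0)$, hence $h^{\alpha_0}$, is harmonic and vanishes on the boundary. For the endpoints $\alpha_0\in\{0,\pi/2\}$ the saddle-point regime degenerates and is replaced by the boundary-layer asymptotics \eqref{eq:asymptboundaries}, whose prefactors are exactly $h^0$ and $h^{\pi/2}$; harmonicity of these two functions alternatively follows from the derivative interpretation in Remark~\ref{rem:qder}, a parameter-derivative of the harmonic family being again harmonic.

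Next I would prove that $\Phi$ is a continuous injection whose image exhausts the Martin boundary. Continuity follows from the continuity of $\alpha\mapsto(p(\alpha),q(\alpha))$ in \eqref{eq:saddlepoint2}, that of $\alpha\mapsto L_1(q(\alpha))+L_2(p(\alpha))$, and the non-vanishing of $h^\alpha(1,1)$. Injectivity is read from the growth of the harmonic functions: by \eqref{eq:asymptz0}, $h^\alpha(z_0)\sim e^{x_0p(\alpha)+y_0q(\alpha)}$ as $z_0\to\infty$, so the exponential rate $(p(\alpha),q(\alpha))$ is recovered from $\Phi(\alpha)$, and since $\alpha\mapsto(p(\alpha),q(\alpha))$ parametrises the arc $\overline{\mathcal{A}}$ of Figure~\ref{fig:pointcol} injectively, distinct $\alpha$ give non-proportional functions. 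For the reverse inclusion, any point of $\partial_M^{\mathbb{R}_+^2}Z$ is a limit of some $k_{z_n}$ with $z_n\to\infty$; by compactness of $[0,\pi/2]$ I pass to a subsequence with $\arctan(y_n/x_n)\to\alpha_0$, and the previous step forces the limit to be $\Phi(\alpha_0)$. Hence $\partial_M^{\mathbb{R}_+^2}Z=\Phi([0,\pi/2])$, and a continuous injection from the compact $[0,\pi/2]$ onto a Hausdorff space is a homeomorphism.

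The main obstacle is the behaviour at the two endpoints $\alpha=0$ and $\alpha=\pi/2$. There the second derivative in \eqref{eq:F''} blows up, the saddle point coalesces with the branch point, and the interior asymptotic \eqref{eq:asymptmain} degenerates into the different $|z|^{-3/2}$ boundary regime of \eqref{eq:asymptboundaries}. The delicate point is a uniform estimate interpolating between these two regimes, ensuring that \emph{every} sequence with $y_n/x_n\to0$ (or $\to\infty$) and $|z_n|\to\infty$ — including sequences staying genuinely inside the quadrant rather than on an axis — still produces $\Phi(0)$ (resp.\ $\Phi(\pi/2)$) as its Martin limit, so that no spurious boundary point is created near the axes.
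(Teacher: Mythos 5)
Your proposal follows essentially the same route as the paper: form the Martin kernel $k(z_0,z)=g_{z_0}(z)/g_{(1,1)}(z)$, obtain its directional limits from the interior asymptotics \eqref{eq:asymptmain} for $\alpha\in(0,\pi/2)$, get injectivity from the growth rates \eqref{eq:asymptz0}, and conclude by compactness of $[0,\pi/2]$. The only point where you stop short — the uniform treatment of sequences with $y_n/x_n\to 0$ or $\infty$ that stay inside the quadrant — is exactly where the paper invokes Proposition~\ref{prop:greenderiv} (identifying $f_1,f_2$ as normal derivatives of $g_{z_0}$ on the axes) together with l'H\^opital's rule and the continuity argument of Remark~\ref{rem:qder}, so you have correctly located the delicate step and the intended resolution.
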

\begin{proof}
We define the Martin's kernel for the arbitrary point $(1,1)\in\left( \mathbb{R}_+^*\right)^2$ by
$$
k(z_0,z):=\frac{g_{z_0}(z)}{g_{(1,1)}(z)}.
$$
By the Martin's boundary theory in a parabolic context~\cite[Chap XIX]{Doob2001} we have to show that for $\alpha\in [0,\pi/2]$, $z=(x,y)$, $z\to\infty$ and $y/x \to \tan\alpha$ we have
\begin{equation*}
k(z_0,z)\rightarrow k^\alpha(z_0):= \frac{h^\alpha (z_0)}{h^\alpha (1,1)}.
\end{equation*}
For $\alpha\in(0,\pi/2)$ it directly derives from the asymptotics of the Green's function $g_{z_0}$ \eqref{eq:asymptmain} found in Theorem~\ref{prop:asympt}. For $\alpha=0$ or $\pi/2$ it is enough to use formula \eqref{eq:asymptboundaries} of Theorem~\ref{prop:asympt} and Proposition~\ref{prop:greenderiv} and to apply l'Hôpital's rule. The continuity of $\Phi$ is clear on $(0,\pi/2)$. The continuity at $0$ derives from Remark~\ref{rem:qder}, noticing that $p'(0)=0$, with a simple Taylor approximation we obtain that $h^\alpha=\alpha q'(0)h^0 +o(\alpha) $ when $\alpha\to 0$ and then $k^\alpha \to k^0$ when $\alpha\to 0$. The same reasoning holds for the continuity at $\pi/2$.
The injectivity of $\Phi$ follows from the asymptotics of $h^\alpha$ when $x_0\to\infty$ and $y_0\to\infty$ which are all different for different values of $\alpha$, see~\eqref{eq:asymptz0} of Proposition~\ref{prop:conditionedescapeproba}.
Since $[0,\pi/2]$ is compact and $\Phi$ is a continuous bijection, $\Phi$ is an homeomorphism.
\end{proof}
In Theorem~\ref{prop:minimalharmonic}, we will deduce from the asymptotics~\eqref{eq:asymptz0} that the Martin boundary is minimal. 

\section{Harmonic functions via the compensation approach}
\label{sec:harmcompensation}

\subsection{Minimal harmonic functions}

The harmonic function $h^\alpha(z_0)$ defined in~\eqref{eq:halpha} has been expressed in terms of the Laplace transforms $L_1$ and $L_2$ and then interpreted in~\eqref{eq:escapeprobah} as a persistence probability of a conditioned process. But this is not a very tractable expression since these Laplace transforms and these probabilities are not known explicitly. 
In this section, we compute explicitly all the harmonic functions $h^\alpha(z_0)$ thanks to a recursive compensation approach.  

\begin{proposition}[Harmonic functions via compensation approach, $\alpha\in(0,\pi/2)$]
For $z=(x,y)\in\mathbb{R}_+^2$, we define the functions $\widetilde h^{(p_0,q_0)}$ by
\begin{equation}
\widetilde h^{(p_0,q_0)}(x,y)
:=
\sum_{n\in\mathbb{Z}} (-1)^n e^{x p_n +y q_n}
\label{eq:defhpq}
\end{equation}
where $(p_n,q_n)$ is a recursive sequence of points on the parabola $\mathcal{P}:=\{(p,q)\in\mathbb{R}_+^2 : K(p,q)=0 \}$ starting from $(p_0,q_0)\in\mathcal{A}:=\{(p(\alpha),q(\alpha)): \alpha\in(0,\pi/2) \}$ the red arc of the parabola, see Figures~\ref{fig:pointcol} and~\ref{fig:seq}, and defined by
\begin{equation}
\begin{cases}
p_{2n}:=p_0+2n(p_0-q_0)-2n(n+\gamma)
\\
q_{2n}:=q_0+2n(p_0-q_0)-2n(n+\gamma-1)
\end{cases}
\quad\text{and}\quad
(p_{2n+1},q_{2n+1})=(p_{2n},q_{2n+2}).
\label{eq:pnqn}
\end{equation} 
Then, the functions $\widetilde h^{(p_0,q_0)}$ are positive and harmonic for the process $Z$ killed at the boundary of the quadrant, that is
\begin{equation}
\begin{cases}
\mathcal{G} \widetilde h^{(p_0,q_0)}  =0 
& \text{on } (\mathbb{R}_+^*)^2
\\
\widetilde h^{(p_0,q_0)} =0 & \text{on } \mathbb{R}\times \{0\} \cup  \{0\} \times \mathbb{R}.
\end{cases}
\label{eq:harm}
\end{equation}
\label{prop:compensationharm}
\end{proposition}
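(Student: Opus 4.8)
The plan is to treat the bi-infinite series \eqref{eq:defhpq} termwise: each summand $e^{xp_n+yq_n}$ will be an exponential solution of $\mathcal G h=0$, the alternating signs will produce the two boundary conditions by pairwise cancellation, and positivity will be extracted from a maximum/optional-stopping argument. First I would check that every node lies on the kernel curve, i.e. $K(p_n,q_n)=0$. For the even nodes this is a direct computation: from \eqref{eq:pnqn} one gets $p_{2n}-q_{2n}=(p_0-q_0)-2n$, and substituting into \eqref{eq:K} the quadratic and linear parts telescope to give $K(p_{2n},q_{2n})=K(p_0,q_0)=0$. The odd nodes are defined by $(p_{2n+1},q_{2n+1})=(p_{2n},q_{2n+2})$, so I would verify via Vieta's relation for the two roots of $q\mapsto K(p_{2n},q)$ that $q_{2n}+q_{2n+2}=2p_{2n}-2\gamma=Q^+(p_{2n})+Q^-(p_{2n})$; hence $q_{2n+2}$ is exactly the $Q$-conjugate of $q_{2n}$ and $(p_{2n+1},q_{2n+1})\in\{K=0\}$. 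Equivalently, the whole sequence is the orbit of $(p_0,q_0)$ under the infinite dihedral group generated by the two involutions that fix $p$ (resp. $q$) and swap the two roots of $K$, each of which preserves the parabola. Since $K(p_n,q_n)=0$, the identity $\mathcal G\,e^{xp_n+yq_n}=K(p_n,q_n)e^{xp_n+yq_n}=0$ shows every summand is harmonic in the interior.

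Next I would settle convergence and the boundary values. From \eqref{eq:pnqn} both $p_n$ and $q_n$ tend to $-\infty$ like $-2\lfloor n/2\rfloor^2$, so on any compact subset of the closed quadrant minus the origin the exponents $xp_n+yq_n$ go to $-\infty$ quadratically in $n$; this gives absolute, locally uniform convergence of \eqref{eq:defhpq} together with all its $x$- and $y$-derivatives, legitimising termwise application of $\mathcal G$ and hence $\mathcal G\widetilde h^{(p_0,q_0)}=0$ on $(\mathbb R_+^*)^2$. The boundary conditions come from the \emph{compensation} structure: on $\{y=0\}$ the relation $p_{2n+1}=p_{2n}$ makes the consecutive terms $n=2n,\,2n+1$ equal in modulus and opposite in sign, so they cancel in pairs and $\widetilde h^{(p_0,q_0)}(x,0)=0$; on $\{x=0\}$ the relation $q_{2n+1}=q_{2n+2}$ produces the complementary pairing $(2n+1,2n+2)$ and gives $\widetilde h^{(p_0,q_0)}(0,y)=0$.

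The delicate point is positivity; this is where I expect the real work. Writing $v_n:=xp_n+yq_n$, a short computation using $(p_0,q_0)\in\mathcal A$ (so that $p_0-q_0\in(\gamma-1,\gamma)$) shows that for every interior direction the map $n\mapsto v_n$ is strictly unimodal with its unique maximum at the \emph{even} index $n=0$, and that the gaps $v_0-v_{\pm1}$ grow linearly with $|z|$; consequently $\widetilde h^{(p_0,q_0)}(z)\sim e^{xp_0+yq_0}>0$ as $z\to\infty$ in every direction of the open quadrant. Note that the naive bound $\widetilde h^{(p_0,q_0)}>e^{v_0}-e^{v_1}-e^{v_{-1}}$ coming from the two alternating tails is \emph{not} enough, since near the axes the gaps $v_0-v_{\pm1}$ are small; the cancellations that force $\widetilde h^{(p_0,q_0)}=0$ on the boundary are genuinely global. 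I would therefore argue by the minimum principle for the degenerate space-time operator $\mathcal G$: take $(p_0,q_0)=(p(\alpha^*),q(\alpha^*))$, pass to the Doob transform $\mathbb P^{\alpha^*}$ of Section~\ref{sec:mart}, under which $\phi:=e^{-xp_0-yq_0}\widetilde h^{(p_0,q_0)}$ becomes bounded and $\widetilde{\mathcal G}$-harmonic, and apply optional stopping to the martingale $t\mapsto \phi(Z_{t\wedge T})$. Since $\phi$ vanishes on $\{T<\infty\}$ and tends to $1$ on $\{T=\infty\}$ (by the asymptotic above), its value equals $\mathbb P^{\alpha^*}_z(T=\infty)\in(0,1)$, which is $>0$ by Proposition~\ref{prop:conditionedescapeproba}; this yields $\widetilde h^{(p_0,q_0)}>0$ and reproduces \eqref{eq:escapeprobah}.

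The main obstacle is making this optional-stopping step rigorous: one must control the exponential growth of $\widetilde h^{(p_0,q_0)}$ to secure uniform integrability of the martingale on the unbounded quadrant. An alternative that sidesteps the analytic estimate is to observe that $\widetilde h^{(p_0,q_0)}$ and the harmonic function $h^{\alpha^*}$ of Theorem~\ref{prop:asympt} are both $\mathcal G$-harmonic, vanish on $\partial(\mathbb R_+^2)$, and share the leading asymptotic $e^{xp_0+yq_0}$; uniqueness of minimal harmonic functions then forces $\widetilde h^{(p_0,q_0)}=c\,h^{\alpha^*}$ with $c>0$, so positivity is inherited from \eqref{eq:escapeprobah}.
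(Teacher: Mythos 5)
Your treatment of harmonicity and of the boundary conditions is correct and coincides with the paper's: you verify $K(p_n,q_n)=0$ for every node (your Vieta/dihedral-orbit description is a clean way to see that $q_{2n+2}$ is the $Q$-conjugate of $q_{2n}$ over $p_{2n}$), you justify termwise application of $\mathcal{G}$ by the quadratic decay of the exponents, and you obtain the two boundary identities from the two interleaved pairings $(2n,2n+1)$ on $\{y=0\}$ and $(2n+1,2n+2)$ on $\{x=0\}$. You are also right that positivity is the delicate point and that the crude bound by the three central terms fails near the axes.

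For positivity, however, your argument is not closed. Your primary route (Doob transform by $e^{xp_0+yq_0}$, then optional stopping for $\phi:=e^{-xp_0-yq_0}\widetilde h^{(p_0,q_0)}$) is viable and in fact proves more than the proposition --- it identifies $\widetilde h^{(p_0,q_0)}$ with $h^{\alpha}$, i.e.\ the content of Theorem~\ref{prop:minimalharmonic} --- but you explicitly leave the uniform-integrability step as an ``obstacle''. The missing observation is the one the paper exploits: since $X(t)+Y(t)=t_0+t$ is \emph{deterministic}, the stopped process lives at each time on a compact cross-section of the quadrant, and moreover $\phi$ is globally bounded on $\{x+y\geq t_0\}$ because $p_n\leq p_0$ and $q_n\leq q_0$ for all $n$ (a consequence of $p_0-q_0\in(\gamma-1,\gamma)$, which you already computed), so that $\sum_n e^{x(p_n-p_0)+y(q_n-q_0)}$ is uniformly bounded away from the origin. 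With that remark your martingale is bounded and the passage to the limit is immediate. Your fallback argument, on the other hand, is circular as stated: the Martin representation and the minimality of the boundary apply only to \emph{positive} harmonic functions, and in the paper's logical order minimality (Theorem~\ref{prop:minimalharmonic}) is itself deduced from the present proposition; you cannot invoke ``uniqueness of minimal harmonic functions'' to prove the positivity that this uniqueness presupposes. For comparison, the paper's own route is more elementary: it groups the series so as to show $\widetilde h^{(p_0,q_0)}\geq 0$ on $\{x+y>t\}$ for some large $t$, and then concludes from $\widetilde h^{(p_0,q_0)}(z_0)=\mathbb{E}\bigl[\widetilde h^{(p_0,q_0)}(Z(t\wedge T))\bigr]$, again using that $X+Y$ grows deterministically so the stopped process is either on the boundary (value $0$) or in the region where the sign is known.
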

\begin{proof}
This sum is typical of the compensation approach. First, we may notice that the convergence of the sum is trivial since the sequence $(p_n,q_n)$ tends to minus infinity quadratically. Let us notice that a function $f(z)=e^{z\cdot (p,q)}$ satisfies $\mathcal{G}f=0$ if and only if $K(p,q)=0$, i.e. $(p,q)\in\mathcal{P}$. It is easy to verify that for all $n\in\mathbb{Z}$ we have $(p_n,q_n)\in\mathcal{P}$, see Figure~\ref{fig:seq}. Therefore, we deduce that $\mathcal{G} \widetilde h^{(p_0,q_0)}  =0$ for $z=(x,y)\in\mathbb{R}_+^2$. Furthermore, using $(p_{2n+1},q_{2n+1})=(p_{2n},q_{2n+2})$ we have the following recursive compensations
$$
\widetilde h^{(p_0,q_0)}(x,y) =  \overunderbraces{& &\br{4}{=0  \text{ when } x=0 }& \br{4}{=0  \text{ when } x=0 }}%
  {\cdots + & e^{p_{-2}x+q_{-2}y} &-& e^{p_{-1}x+q_{-1}y}  &+&e^{p_{0}x+q_{0}y} &-&e^{p_{1}x+q_{1}y} &+ & e^{p_{2}x+q_{2}y} &+ \cdots}%
  {& \br{3}{=0  \text{ when } y=0 } & &\br{3}{=0  \text{ when } y=0 }}
$$
and we deduce that $\widetilde h^{(p_0,q_0)}$ satisfies the boundary conditions of~\eqref{eq:harm}. 

It only remains to show the positivity. First, there exists $t>0$ large enough such that when $x+y>t$ we have $\widetilde h^{(p_0,q_0)}(z) >0$, indeed
\begin{align*}
\widetilde h^{(p_0,q_0)}(x,y) =  
\overbrace{ e^{p_{0}x+q_{0}y} -e^{p_{1}x+q_{1}y} - e^{p_{-1}x+q_{-1}y}  }^{\geqslant 0  \text{ when } x+y \text{ large enough}  }     %
 &+ \sum_{\mathbb{N}^*}
  \overbrace{e^{p_{2n}x+q_{2n}y} - e^{p_{2n+1}x+q_{2n+1}y}  }^{\geqslant 0 \text{ always}}
  \\ 
 &+  \sum_{\mathbb{N}^*} 
 \overbrace{e^{p_{-2n}x+q_{-2n}y} - e^{p_{-2n-1}x+q_{-2n-1}y} .}^{\geqslant 0 \text{ always}}
\end{align*}
For all starting point $z_0=(x_0,y_0)\in\mathbb{R}_+^2$ of the process $Z=(X,Y)$, remembering that $X(t)+Y(t)=t_0+t$ and since $\widetilde h^{(p_0,q_0)}$ is harmonic, we deduce that
$$
\widetilde h^{(p_0,q_0)} (x_0,y_0)=\mathbb{E}[\widetilde h^{(p_0,q_0)}(Z(t))]
\geqslant 0 .
$$
\end{proof}

\begin{figure}[hbtp]
\centering
\includegraphics[scale=0.67]{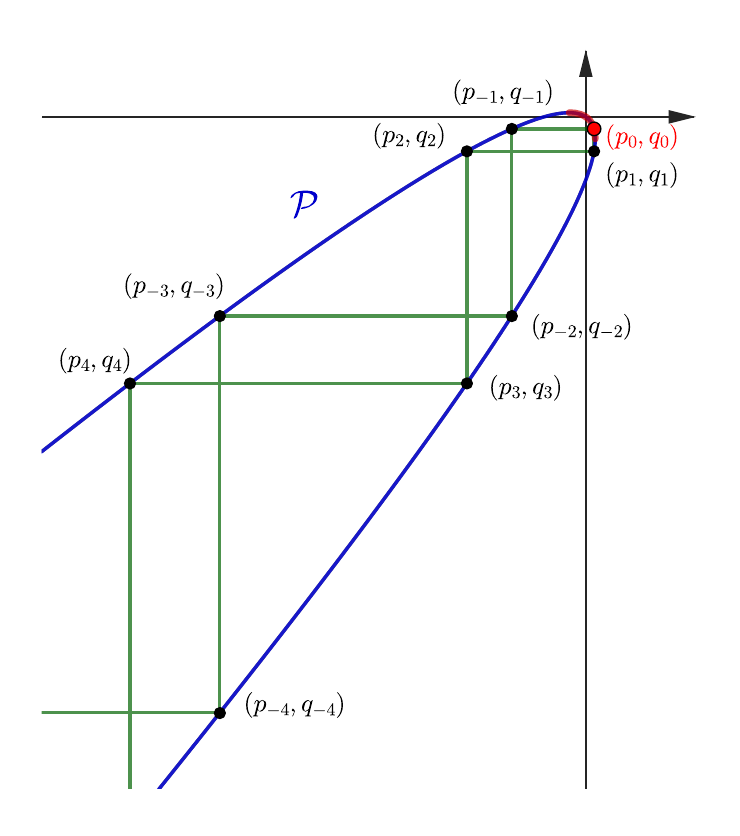}
\includegraphics[scale=0.67]{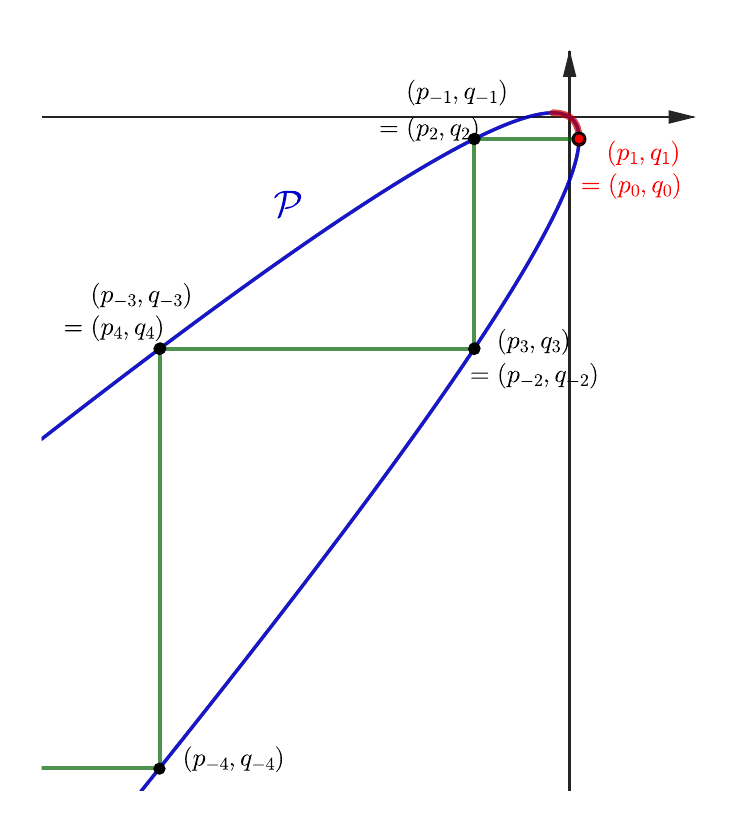}
\caption{Sequel of the points $(p_n,q_n)$ on the parabola $\mathcal{P}:=\{(p,q)\in\mathbb{R}^2 : K(p,q)=0\}$. On the left $(p_0,q_0)\in \mathcal{A}:=\{(p(\alpha),q(\alpha)): \alpha\in(0,\pi/2) \}$, on the right, $(p_0,q_0)=(p(0),q(0)) \in \partial \mathcal{A}$.}
\label{fig:seq}
\end{figure}

It remains to define the two harmonic functions associated with the extremal points of the arc $\partial{\mathcal{A}}= \{(p(0),q(0)),(p(\pi/2),q(\pi/2)) \}$. Indeed, when $(p_0,q_0)\to(p(0),q(0))$ or 
$(p(\pi/2),q(\pi/2))$ we have $\widetilde h^{(p_0,q_0)}\to 0$. That is why we have to define $\widetilde h^{(p(0),q(0))}$ and $\widetilde h^{(p(\pi/2),q(\pi/2))}$ differently. 
\begin{proposition}[Harmonic functions via compensation approach, $\alpha=0$ or $\pi/2$]
For $z=(x,y)\in\mathbb{R}_+^2$, we define the function $\widetilde h^{(p(0),q(0))}$ by
\begin{equation}
\widetilde h^{(p(0),q(0))}(x,y)
:=
\sum_{n\in\mathbb{Z}} ( -2n x +(1-2n)y) e^{xp_{2n} +y q_{2n}}
\label{eq:defhpq0}
\end{equation}
where $(p_{2n},q_{2n})$ is a recursive sequence of points on the parabola $\mathcal{P}$ starting from $(p_0,q_0)=(p(0),q(0))=(\gamma^2/2,-\gamma+\gamma^2/2)\in\partial\mathcal{A}$, and defined by~\eqref{eq:pnqn}, which means in this case
\begin{equation}
\begin{cases}
p_{2n}:=\gamma^2/2 -2n^2,
\\
q_{2n}:=\gamma^2/2 -\gamma -2n(n-1).
\end{cases}
\label{eq:pnqnalpha0}
\end{equation} 
Then, the function $\widetilde h^{(p_0,q_0)}$ is positive and harmonic for the process $Z$ killed at the boundary of the quadrant, that is satisfying~\eqref{eq:harm}. A symmetric expression holds for $\widetilde h^{(p(\pi/2),q(\pi/2))}$.
\label{prop:compensationharm0}
\end{proposition}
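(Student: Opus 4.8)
The plan is to follow the four-step template of the proof of Proposition~\ref{prop:compensationharm}: convergence, interior harmonicity, boundary conditions, and positivity, the last being the delicate point. Convergence of the series~\eqref{eq:defhpq0} and the legitimacy of differentiating it term by term are immediate, since by~\eqref{eq:pnqnalpha0} one has $p_{2n}=\gamma^2/2-2n^2$ and $q_{2n}=\gamma^2/2-\gamma-2n(n-1)$, which tend to $-\infty$ quadratically in $n$; the exponentials therefore decay super-exponentially and dominate the affine prefactors $-2nx+(1-2n)y$ uniformly on compact sets.

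For interior harmonicity I would first record, applying $\mathcal{G}$ from~\eqref{eq:Generateur}, the elementary identity
\[
\mathcal{G}\bigl((ax+by)e^{px+qy}\bigr)=\Bigl((ax+by)K(p,q)+(a-b)(p-q)+(1-\gamma)a+\gamma b\Bigr)e^{px+qy}.
\]
Hence a single term $(ax+by)e^{px+qy}$ is $\mathcal{G}$-harmonic on all of $\mathbb{R}^2$ as soon as $(p,q)\in\mathcal{P}$, so that $K(p,q)=0$, and the linear condition $(a-b)(p-q)+(1-\gamma)a+\gamma b=0$ holds. For $(a,b)=(-2n,1-2n)$ one has $a-b=-1$, so this condition reduces exactly to $p-q=\gamma-2n$. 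It then suffices to check from~\eqref{eq:pnqnalpha0} the two identities $K(p_{2n},q_{2n})=0$ and $p_{2n}-q_{2n}=\gamma-2n$, both direct computations, to conclude that every term of~\eqref{eq:defhpq0} is separately annihilated by $\mathcal{G}$, and therefore so is the whole sum.

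For the boundary conditions I would exploit two symmetries of the even sequence visible in~\eqref{eq:pnqnalpha0}: the map $p_{2n}=p_{-2n}$ is even in $n$, and $q_{2n}=q_{2(1-n)}$ is invariant under $n\mapsto 1-n$. On the line $y=0$ the series reduces to $\sum_n(-2nx)\,e^{xp_{2n}}$; pairing the indices $n$ and $-n$, whose exponentials coincide while the prefactors $-2nx$ are opposite (the term $n=0$ vanishing on its own), gives termwise cancellation, so $\widetilde h^{(p(0),q(0))}(x,0)=0$. On the line $x=0$ the series is $\sum_n(1-2n)y\,e^{yq_{2n}}$; pairing $n$ with $1-n$, whose exponentials coincide by the second symmetry while $1-2(1-n)=-(1-2n)$, again yields cancellation, so $\widetilde h^{(p(0),q(0))}(0,y)=0$. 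This establishes~\eqref{eq:harm} except for positivity.

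The main obstacle is positivity, for which the direct grouping of Proposition~\ref{prop:compensationharm} is unavailable, since here no two terms share an exponent and the prefactors change sign. I would instead obtain positivity as a limit of the already-established nonnegative functions, in the spirit of Remark~\ref{rem:qder}. Fix $z=(x,y)\in\mathbb{R}_+^2$ and set $D(\alpha):=\widetilde h^{(p(\alpha),q(\alpha))}(z)$. At $\alpha=0$ the symmetries above force the odd-indexed points of~\eqref{eq:pnqn} to merge with even-indexed ones, namely $(p_{2n+1},q_{2n+1})=(p_{-2n},q_{-2n})$, so the alternating series~\eqref{eq:defhpq} collapses and $D(0)=0$, whereas $D(\alpha)\geq 0$ for $\alpha\in(0,\pi/2)$ by Proposition~\ref{prop:compensationharm}. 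Differentiating $D$ term by term at $\alpha=0^+$, and using $\dot p_{2m}=\dot p_0+2m(\dot p_0-\dot q_0)$ and $\dot q_{2m}=\dot q_0+2m(\dot p_0-\dot q_0)$ from~\eqref{eq:pnqn} together with the merging of points, the coefficient of $e^{xp_{2m}+yq_{2m}}$ is found to be $-2(\dot p_0-\dot q_0)\,(-2mx+(1-2m)y)$; that is, $\widetilde h^{(p(0),q(0))}(z)=\tfrac12\,D'(0^+)$ after checking that $\tfrac{d}{d\alpha}(p(\alpha)-q(\alpha))|_{0^+}=-1$ via~\eqref{eq:saddlepoint2}. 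Since $D\geq 0$ and $D(0)=0$, the one-sided derivative $D'(0^+)=\lim_{\alpha\to0^+}D(\alpha)/\alpha$ is $\geq 0$, and positivity follows; the delicate points are precisely the velocity computation identifying the limit of difference quotients with~\eqref{eq:defhpq0} and the verification that the proportionality constant is positive. (Alternatively one may argue directly that $\widetilde h^{(p(0),q(0))}\geq 0$ on $\{x+y\ \text{large}\}\cap\mathbb{R}_+^2$, grouping the indices $n$ and $-n$ which share the same $p_{2n}$ so that the leading subdominant contributions recombine with a nonnegative coefficient, and then conclude by optional stopping for the killed process using $X(t)+Y(t)=t_0+t\to\infty$, exactly as at the end of the proof of Proposition~\ref{prop:compensationharm}.) The symmetric function $\widetilde h^{(p(\pi/2),q(\pi/2))}$ is treated identically after exchanging $x\leftrightarrow y$, $p\leftrightarrow q$ and $\gamma\leftrightarrow 1-\gamma$.
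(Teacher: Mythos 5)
Your proof is correct. For convergence, interior harmonicity and the boundary conditions it coincides with the paper's argument: your identity for $\mathcal{G}\bigl((ax+by)e^{px+qy}\bigr)$ is exactly the paper's condition $\lambda_1((1-\gamma)+p-q)+\lambda_2(\gamma+q-p)=0$ specialised to $(\lambda_1,\lambda_2)=(-2n,1-2n)$, and your pairings $n\leftrightarrow -n$ on $\{y=0\}$ and $n\leftrightarrow 1-n$ on $\{x=0\}$ are precisely the telescoping compensation displayed in the paper, which rests on the same identities $p_{2n}=p_{-2n}$ and $q_{2n}=q_{-2n+2}$. Where you genuinely diverge is positivity. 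The paper asserts that $\widetilde h^{(p(0),q(0))}\geqslant 0$ for $x+y$ large enough and concludes by the optional-stopping argument of Proposition~\ref{prop:compensationharm}; you instead realise $\widetilde h^{(p(0),q(0))}(z)$ as $\tfrac12 D'(0^+)$ with $D(\alpha):=\widetilde h^{(p(\alpha),q(\alpha))}(z)$, using that $D(0)=0$ (collapse of the alternating series via $(p_{2n+1},q_{2n+1})=(p_{-2n},q_{-2n})$ at $\alpha=0$) and $D\geqslant 0$ on $(0,\pi/2)$. This is in the spirit of Remark~\ref{rem:qder} and has the advantage of sidestepping the slightly delicate verification that the sign of the series is controlled near the boundary on the set $\{x+y \text{ large}\}$; its cost is that you must justify term-by-term differentiation in $\alpha$ of the doubly infinite series and carry out the velocity computation $\dot p_0-\dot q_0=-1$, both of which you correctly flag and which do go through since the exponents decay like $e^{-cn^2}$ uniformly in $\alpha$ near $0$. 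Either route completes the proof, and you also record the paper's route as a parenthetical alternative.
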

\begin{proof}
First, we again notice that the convergence of the sum is trivial since the sequence $(p_{2n},q_{2n})$ tends to minus infinity quadratically. 
Here we cannot compensate in the same way that in the previous proposition because we would obtain the null function since $(p_{2n},q_{2n})=(p_{-2n+1},q_{-2n+1})$. Therefore, we compensate using linear polynomials in front of the exponential terms.
With a straightforward computation we notice that a function $f(x,y)=(\lambda_1x+\lambda_2y)e^{xp+yq}$ satisfies $\mathcal{G}f=0$ if and only if $K(p,q)=0$ and $\lambda_1((1-\gamma)+p-q)+\lambda_2(\gamma+q-p)=0$. It is easy to verify that 
$K(p_{2n},q_{2n})=0$ and $-2n((1-\gamma)+p_{2n}-q_{2n})+(1-2n)(\gamma+q_{2n}-p_{2n})=0$.
Therefore we deduce that $\mathcal{G} \widetilde h^{(p_0,q_0)}  =0$ for $z=(x,y)\in\mathbb{R}_+^2$. Furthermore, using the fact that $(p_{2n},q_{2n})=(p_{-2n},q_{-2n+2})$ we have the following recursive compensations, we obtain that $\widetilde h^{(p(0),q(0))}(x,y)$ is equal to
\begin{multline*}
 \overunderbraces{ & \br{1}{=0 \text{ when } y=0} &\br{4}{=0  \text{ when } y=0 }& \br{4}{=0  \text{ when } y=0 }}%
  { & y e^{p_{0}x+q_{0}y} &- (2x+y)& e^{p_{2}x+q_{2}y}  &+&(2x+3y)e^{p_{-2}x-q_{-2}y} &-(4x+3y)&e^{p_{4}x+q_{4}y} &+(4x+5y) & e^{p_{-4}x+q_{-4}y} &+ \cdots}%
  {& \br{3}{=0  \text{ when } x=0 } & &\br{3}{=0  \text{ when } x=0 }}
\end{multline*}
and we deduce that $\widetilde h^{(p(0),q(0))}$ satisfies the boundary conditions of~\eqref{eq:harm}. Positivity is proved in the same way as in the proof of Proposition~\ref{prop:compensationharm}. It is easy to see that the function $\widetilde h^{(p(0),q(0))}$ is positive for $x+y$ large enough and we conclude in the same way thanks to the harmonic character of the function.
\end{proof}

The following proposition shows that the compensation approach actually
allows to construct all positive minimal harmonic functions.
This implies a correspondence between minimal positive harmonic functions and the arc of parabola $\overline{\mathcal{A}}$.
\begin{theorem}[Minimal harmonic functions]
For all $\alpha\in[0,\pi/2]$ we have
\begin{equation}
\widetilde h^{(p(\alpha),q(\alpha))} = h^\alpha
\label{eq:htildeh}
\end{equation}
where $(p(\alpha),q(\alpha))$ is the saddle point defined in \eqref{eq:saddlepoint}. The Martin boundary given in Theorem~\ref{cor:martinboundary} is minimal. It is homeomorphic to the arc of parabola $\overline{\mathcal{A}}$ through the map 
\renewcommand{\arraystretch}{1.7}
$$
\begin{array}{ccccc}
 \overline{\mathcal{A}} & \longrightarrow & \partial_M^{\mathbb{R}_+^2} Z \\
(p(\alpha),q(\alpha)) & \longmapsto & \displaystyle \frac{\widetilde h^{(p(\alpha),q(\alpha))}(z_0)}{\widetilde h^{(p(\alpha),q(\alpha))}(1,1)} . \\
\end{array}
$$
 See Figures~\ref{fig:pointcol} and~\ref{fig:seq}.
\label{prop:minimalharmonic}
\end{theorem}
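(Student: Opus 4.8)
The plan is to prove the three assertions in order: the pointwise identity \eqref{eq:htildeh}, the minimality of the boundary, and the homeomorphism with $\overline{\mathcal{A}}$; the last two follow almost formally once \eqref{eq:htildeh} is in hand, so the heart of the matter is the identity. Throughout I write $e_\alpha(x,y):=e^{p(\alpha)x+q(\alpha)y}$.

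To prove $\widetilde h^{(p(\alpha),q(\alpha))}=h^\alpha$ for $\alpha\in(0,\pi/2)$, I pass to the Doob transform $\mathbb{P}^\alpha$ and set $u:=\widetilde h^{(p(\alpha),q(\alpha))}/e_\alpha$. By Proposition~\ref{prop:compensationharm} the function $u$ is nonnegative, vanishes on the two axes, and is harmonic for the $\mathbb{P}^\alpha$-process; in view of \eqref{eq:escapeprobah} it is therefore enough to show $u(z_0)=\mathbb{P}^\alpha(T=\infty)$. For this I apply optional stopping to the nonnegative martingale $M_t:=u(Z(t\wedge T))$ under $\mathbb{P}^\alpha$. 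Since $\widetilde h^{(p(\alpha),q(\alpha))}$ vanishes on the boundary, $M_t=0$ on $\{T\le t\}$, while on $\{T=\infty\}$ the conditioned process escapes to infinity in the direction $\alpha$: indeed the $\mathbb{P}^\alpha$-drift equals $\mu+\Sigma(p(\alpha),q(\alpha))=\nabla K(p(\alpha),q(\alpha))$ (where $\Sigma$ is the covariance matrix), which by the Lagrange (argmax) characterization \eqref{eq:saddlepoint} of the saddle point is colinear with $(\cos\alpha,\sin\alpha)$ and points into the open quadrant. Hence on $\{T=\infty\}$ one has $Z(t)/|Z(t)|\to(\cos\alpha,\sin\alpha)$ by the law of large numbers, and $u(Z(t))\to1$ because the $n=0$ term of the series \eqref{eq:defhpq} strictly dominates in the direction $\alpha$. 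Letting $t\to\infty$ gives $u(z_0)=\mathbb{P}^\alpha(T=\infty)$, which is precisely \eqref{eq:escapeprobah}.

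The one genuine technical point — and the step I expect to be the main obstacle — is the uniform integrability needed to pass to the limit, for which I would show that $u$ is bounded on $\mathbb{R}_+^2$. Writing $s_n:=p_n-q_n$, the recursion \eqref{eq:pnqn} gives $s_{2n}=s_0-2n$, and since $\mathcal{P}$ can be parametrized by $s$ with $p$ and $q$ each a downward parabola in $s$ maximized at the two endpoints of the arc $\overline{\mathcal{A}}$, one checks that $s_0\in[-(1-\gamma),\gamma]$ forces $p_n\le p_0$ and $q_n\le q_0$ for all $n$. Thus every exponent $(p_n-p_0)x+(q_n-q_0)y$ is nonpositive on the quadrant, so $M_t$ is a bounded martingale; the only delicate region is the corner, where the alternating signs produce a theta-type cancellation that makes $u\to0$. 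With $u$ bounded, $M_t$ is uniformly integrable, $M_\infty=\mathds{1}_{T=\infty}$, and optional stopping applies. The two endpoints $\alpha=0,\pi/2$ are then obtained by differentiating the identity in $\alpha$ and invoking Remark~\ref{rem:qder}, which matches the polynomial prefactors of \eqref{eq:defhpq0} with the derivative expressions for $h^0$ and $h^{\pi/2}$.

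For minimality I would use the asymptotics \eqref{eq:asymptz0}. Since Theorem~\ref{cor:martinboundary} already identifies the full Martin boundary with $\{h^\beta:\beta\in[0,\pi/2]\}$, the Martin representation writes $h^\alpha=\int_{[0,\pi/2]}h^\beta\,d\mu(\beta)$ for some measure $\mu$. Comparing growth rates as $z_0\to\infty$ in an arbitrary direction $\alpha'$: by \eqref{eq:asymptz0} the left side grows like $e^{r(p(\alpha)\cos\alpha'+q(\alpha)\sin\alpha')}$, whereas the integral grows at the rate $\max_{\beta\in\operatorname{supp}\mu}\bigl(p(\beta)\cos\alpha'+q(\beta)\sin\alpha'\bigr)$; the argmax property \eqref{eq:saddlepoint} shows the latter strictly exceeds the former as soon as $\mu$ charges a neighborhood of any $\alpha'\ne\alpha$. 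Hence $\operatorname{supp}\mu=\{\alpha\}$, so $\mu$ is a point mass and $h^\alpha$ is minimal; as this holds for every $\alpha\in[0,\pi/2]$, the minimal boundary coincides with the full one. Finally, $\alpha\mapsto(p(\alpha),q(\alpha))$ is a continuous injection of the compact interval $[0,\pi/2]$ onto $\overline{\mathcal{A}}$, hence a homeomorphism; composing with $\Phi$ from Theorem~\ref{cor:martinboundary} and using \eqref{eq:htildeh} to rewrite $\Phi(\alpha)=\widetilde h^{(p(\alpha),q(\alpha))}(\cdot)/\widetilde h^{(p(\alpha),q(\alpha))}(1,1)$ yields the stated homeomorphism between $\overline{\mathcal{A}}$ and the (now minimal) Martin boundary.
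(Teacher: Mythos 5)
Your proposal is correct in substance, but it reaches the key identity \eqref{eq:htildeh} by a genuinely different route from the paper. The paper proves the identity and minimality in one stroke: since Theorem~\ref{cor:martinboundary} already identifies the full Martin boundary with $\{h^\beta/h^\beta(1,1)\}_{\beta\in[0,\pi/2]}$, the positive harmonic function $\widetilde h^{(p(\alpha),q(\alpha))}$ admits a Martin representation $\widetilde h^{(p(\alpha),q(\alpha))}(z_0)=\int_0^{\pi/2}\frac{h^\beta(z_0)}{h^\beta(1,1)}\,\mathrm{d}m_\alpha(\beta)$, and comparing the $z_0\to\infty$ asymptotics of the two sides (the dominant $n=0$ term of the compensation series on the left, \eqref{eq:asymptz0} of Proposition~\ref{prop:conditionedescapeproba} on the right) forces $m_\alpha=h^\alpha(1,1)\,\delta_\alpha$, which yields $\widetilde h^{(p(\alpha),q(\alpha))}=h^\alpha$ and minimality simultaneously. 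You instead prove the identity probabilistically: under the Doob transform $\mathbb{P}^\alpha$ the ratio $u=\widetilde h^{(p(\alpha),q(\alpha))}e^{-p(\alpha)x-q(\alpha)y}$ is a bounded harmonic function vanishing on the axes, and optional stopping together with the law of large numbers for the conditioned drift $\nabla K(p(\alpha),q(\alpha))$, colinear with $(\cos\alpha,\sin\alpha)$ by the argmax characterization \eqref{eq:saddlepoint}, gives $u(z_0)=\mathbb{P}^\alpha(T=\infty)$, which by \eqref{eq:escapeprobah} is exactly $h^\alpha(z_0)e^{-p(\alpha)x_0-q(\alpha)y_0}$. This is a valid and more self-contained argument for \eqref{eq:htildeh}, at the cost of the uniform-integrability step you flag as the main obstacle; note, however, that the ``delicate corner region'' is actually vacuous here, since $X(t)+Y(t)=t_0+t\geq t_0>0$ along every trajectory, so boundedness of $u$ is only needed on $\{x+y\geq t_0\}$, where the series converges absolutely and uniformly because $p_n\leq p_0$, $q_n\leq q_0$ and $(p_n,q_n)\to-\infty$ quadratically. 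Your treatment of the endpoints $\alpha=0,\pi/2$ by differentiation along the parabola is only sketched but is consistent with Remark~\ref{rem:qder} (one checks that $\partial_{q_0}(xp_{2n}+yq_{2n})=-2nx+(1-2n)y$ at $(p(0),q(0))$, reproducing the prefactors of \eqref{eq:defhpq0}); the paper sidesteps this by running the same representation-measure argument with the $y_0e^{x_0p(0)+y_0q(0)}$ asymptotics. Your minimality argument is then essentially the paper's: a representing measure charging any $\beta\neq\alpha$ would produce a strictly larger exponential growth rate in the direction $\beta$, by the argmax property of $(p(\beta),q(\beta))$, and the homeomorphism statement follows formally as you say.
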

\begin{proof} Recall that the Martin boundary $\partial_M^{\mathbb{R}_+^2} Z$ is determined in Theorem~\ref{cor:martinboundary} and is homeomorphic to $[0,\pi/2]$. In Propositions~\ref{prop:compensationharm} and~\ref{prop:compensationharm0} we have seen that the functions $\widetilde h^{(p(\alpha),q(\alpha))} $ are harmonic for $\alpha\in[0,\pi/2]$. By Martin boundary theory~\cite{Doob2001}, 
we have
\begin{equation}
\widetilde h^{(p(\alpha),q(\alpha))} (z_0) =\int_0^{\pi/2} \frac{h^\beta (z_0)}{h^\beta (1,1)} \mathrm{d}m_\alpha(\beta)
\label{eq:htildeMB}
\end{equation} 
for some measure $m_\alpha$ on $[0,\pi/2]$. Let $z_0=(x_0,y_0)$ and $\alpha\in(0,\pi/2)$, when $x_0\to\infty$ and $y_0\to\infty$ we have $\widetilde h^{(p(\alpha),q(\alpha))} (z_0)\sim e^{x_0 p(\alpha)+ y_0 q(\alpha)}$ by~\eqref{eq:defhpq} and $h^\beta (z_0) \sim e^{x_0 p(\beta)+y_0 q(\beta)}$ by Proposition~\ref{prop:conditionedescapeproba}. For $\alpha=0$, we have $\widetilde h^{(p(0),q(0))} (z_0)\sim y_0 e^{x_0 p(0)+ y_0 q(0)}$ by~\eqref{eq:defhpq0} and $h^\beta (z_0) \sim y_0 e^{x_0 p(\beta)+y_0 q(\beta)}$ and a symetrical result holds for $\alpha=\pi/2$. Then, from these asymptotics and~\eqref{eq:htildeMB} we can deduce that $m_\alpha =h^\alpha (1,1) \delta_\alpha$ (where $\delta_\alpha$ is the dirac measure at $\alpha$) and therefore $\widetilde h^{(p(\alpha),q(\alpha))} = h^\alpha$. Indeed, it is not possible to create an exponential asymptotic of a certain parameter by averaging exponential asymptotics of other parameters.
We deduce that these harmonic functions are minimal since there is a unique measure on the Martin boundary which satisfies~\eqref{eq:htildeMB}. The Martin boundary is then equal to the minimal Martin boundary and is homeomorphic to the arc of parabola $\overline{\mathcal{A}}$ through the composition between the map $\Phi$ given in Theorem~\ref{cor:martinboundary} and the homeomorphism $[0,\pi/2]\to \overline{\mathcal{A}} :\alpha \mapsto (p(\alpha),q(\alpha))$.
\end{proof}

The same kind of phenomenon appears for non-singular random walks~\cite{ignatiouk_loree_2010} and singular one~\cite{hoang_raschel_tarrago_harmonic_22}.

\subsection{Persistence and exit probabilities}

From~\eqref{eq:escapeprobah},~\eqref{eq:defhpq} and~\eqref{eq:htildeh} we find explicitly the persistence probability of the conditioned process: $\mathbb{P}^\alpha(T=\infty)=e^{-x_0 p(\alpha)-y_0 q(\alpha)}  h^\alpha(z_0)$. The next remark focuses on the non-conditioned process. 

\begin{remark}[Persistence probability]
Let $\alpha_\gamma \in (0,\pi/2)$ such that $\tan\alpha_\gamma=\gamma/(1-\gamma)$, we have $(p(\alpha_\gamma),q(\alpha_\gamma))=(0,0)$ and then $\mathbb{P}=\mathbb{P}^{\alpha_\gamma}$. Then, the probability 
that the process stays in the cone forever is equal to
$$
\mathbb{P}_{(t_0,y_0)}(\forall t>0 : X_t\in C)=\mathbb{P}(T=\infty)=\widetilde h^{(0,0)}(z_0)= h^{\alpha_\gamma} (z_0) .
$$
For $z_0=(x_0,y_0)=(t_0-y_0,y_0)$ and remembering equation~\eqref{eq:pnqn} we obtain
\begin{align*}
\mathbb{P}(T=\infty)
&=\sum_{n\in\mathbb{Z}} (-1)^n e^{p_n x_0 + q_n y_0}
=
\sum_{n\in\mathbb{Z}} e^{p_{2n} x_0 + q_{2n} y_0} - \sum_{n\in\mathbb{Z}} e^{\overbrace{p_{2n+1}}^{p_{2n}} x_0 + \overbrace{q_{2n+1}}^{q_{2n+2}} y_0}
\\ &=
\sum_{n\in\mathbb{Z}} 2 \sinh \left( \frac{q_{2n} - q_{2n+2}}{2} y_0 \right) e^{p_{2n} x_0 + \frac{q_{2n}+ q_{2n+2}}{2} y_0 }
\\ &= 2 \sum_{n\in\mathbb{Z}} \sinh((2n+\gamma)y_0) e^{-2n(n+\gamma)t_0-\gamma y_0}.
\end{align*}
We can also express this quantity in another way to match Defosseux's result~\cite[Proposition 2.3]{defosseux_2016}. By noting that $p_{-2n} x_0 + q_{-2n} y_0=-2n^2t_0+2n\gamma t_0-2ny_0$ and ${p_{2n}} x_0 +{q_{2n+2}} y_0=-2n^2t_0-2n\gamma t_0-2\gamma y_0-2ny_0$ 
and with a simple change of variable in the sum we obtain
\begin{align*}
\mathbb{P}(T=\infty)
&=
\sum_{n\in\mathbb{Z}} e^{p_{-2n} x_0 + q_{-2n} y_0} - \sum_{n\in\mathbb{Z}} e^{{p_{2n}} x_0 +{q_{2n+2}} y_0}
\\ &=  2 \sum_{n\in\mathbb{Z}} \sinh(\gamma (y_0+2nt_0)) e^{-2(n y_0+n^2 t_0)-\gamma y_0}.
\end{align*}
\label{rem:persistproba}
\end{remark}

We are now interested in $\mathbb{P}^\alpha(T_1<T_2)$ and $\mathbb{P}^\alpha(T_2<T_1)$ which are the exit probabilities on each side of the cone of the conditioned process. 
\begin{proposition}[Conditioned probabilities of exit on an edge]
For $\alpha\in [0,\pi]$ we have
\begin{align*}
L_{1}(q(\alpha))
= e^{p(\alpha) x_0+q(\alpha)y_0}\mathbb{P}^\alpha(T_1<T_2)
=\sum_{n\in -\mathbb{N}^*} (-1)^{n+1} e^{p_n x_0 + q_n y_0}
\end{align*}
and 
\begin{align*}
L_2(p(\alpha))
= e^{p(\alpha) x_0+q(\alpha)y_0}\mathbb{P}^\alpha(T_2<T_1)
=\sum_{n\in +\mathbb{N}^*} (-1)^{n+1} e^{p_n x_0 + q_n y_0}
\end{align*}
where $(p_0,q_0)=(p(\alpha),q(\alpha))$ and the sequence $(p_n,q_n)_{n\in\mathbb{Z}}$ is defined in~\eqref{eq:pnqn}.
\label{prop:L1explicit}
\end{proposition}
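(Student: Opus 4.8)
The plan is to prove the two displayed chains of equalities separately, the first being immediate and the second requiring a telescoping argument along the recursive sequence. The left equalities $L_1(q(\alpha))=e^{p(\alpha)x_0+q(\alpha)y_0}\mathbb{P}^\alpha(T_1<T_2)$ and its analogue for $L_2$ follow directly from the Doob $h$-transform dictionary set up at the start of Section~\ref{sec:mart}: since $L_1^\alpha(q)=L_1(q+q(\alpha))e^{-x_0 p(\alpha)-y_0 q(\alpha)}$ and $L_1^\alpha(0)=\mathbb{P}^\alpha(T_1<T_2)$, evaluating at $q=0$ gives the claim, and symmetrically for $L_2$. So the entire content lies in the explicit series. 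Throughout I abbreviate $E_n:=e^{p_n x_0+q_n y_0}$, $a_n:=L_1(q_n)$ and $b_n:=L_2(p_n)$ for the sequence of~\eqref{eq:pnqn} started at $(p_0,q_0)=(p(\alpha),q(\alpha))$, so that the goals become $a_0=\sum_{n\in-\mathbb{N}^*}(-1)^{n+1}E_n$ and $b_0=\sum_{n\in+\mathbb{N}^*}(-1)^{n+1}E_n$.

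Two families of identities drive the computation. The first are the \emph{evaluation identities} $a_{2n+1}=a_{2n+2}$ and $b_{2n+1}=b_{2n}$, which are simply the relations $q_{2n+1}=q_{2n+2}$ and $p_{2n+1}=p_{2n}$ of~\eqref{eq:pnqn} fed into $L_1$ and $L_2$; these hold for every $n$ and need no hypothesis beyond the definition of the sequence. The second is the \emph{kernel identity} $a_n+b_n=E_n$, valid for every $n\neq 0$, obtained by evaluating the functional equation~\eqref{eq:eqfunc} on the zero set $\{K=0\}$.

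The crux, and the step I expect to demand the most care, is precisely this kernel identity. On $\mathcal{P}$ the functional equation formally reads $L_1(q)+L_2(p)=e^{px_0+qy_0}$, but it holds only where $L$ has no pole, that is on the branches reached by the analytic continuations $L_1(q)=e^{P^-(q)x_0+qy_0}-L_2(P^-(q))$ and its symmetric counterpart for $L_2$. Parametrizing $\mathcal{P}$ by $s:=p-q$, so that $p=-\tfrac12 s^2+\gamma s$ and $q=-\tfrac12 s^2-(1-\gamma)s$, one checks that $p=P^-(q)$ exactly on $\{s<\gamma-1\}$ (where $L_1$'s continuation applies, since there $q<(1-\gamma)^2/2$) and $q=Q^-(p)$ exactly on $\{s>\gamma\}$ (where $L_2$'s continuation applies). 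The closed gap $s\in[\gamma-1,\gamma]$ is nothing but the arc $\overline{\mathcal{A}}$ — one verifies $s=\gamma$ at $\alpha=0$ and $s=\gamma-1$ at $\alpha=\pi/2$ — and there the identity fails by the amount $h^\alpha$. I must therefore confirm that~\eqref{eq:pnqn} carries every index $n\neq 0$ out of this gap: a direct computation yields $s_{2n}=s_0-2n$ for the even points, while each odd point, sharing its $p$-coordinate with the even point below it, is the reflection of $s_0-2n$ about $s=\gamma$, namely $s_{2n+1}=2\gamma-s_0+2n$. Since $s_0\in(\gamma-1,\gamma)$ for $\alpha\in(0,\pi/2)$, every $(p_n,q_n)$ with $n\neq 0$ then lands strictly in $\{s<\gamma-1\}$ or $\{s>\gamma\}$, so the continued functional equation gives $a_n+b_n=E_n$ there, whereas $n=0$ is the lone point sitting in the gap where it fails. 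This branch bookkeeping is the real obstacle.

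Granting both families of identities, the conclusion is a telescoping. Combining $b_{2n+1}=b_{2n}$ and $a_{2n+1}=a_{2n+2}$ with the kernel identity at the odd index $2n+1$ (always permissible) gives $a_{2n+2}+b_{2n}=E_{2n+1}$; subtracting the kernel identity at $2n+2$ yields $b_{2n+2}-b_{2n}=E_{2n+2}-E_{2n+1}$ for $n\neq -1$, and subtracting the one at $2n$ yields $a_{2n+2}-a_{2n}=E_{2n+1}-E_{2n}$ for $n\neq 0$. Because $(p_n,q_n)$ tends to $(-\infty,-\infty)$ quadratically and $x_0,y_0>0$, we have $E_n\to 0$; and since $X(T_2)=t_0+T_2>0$ and $Y(T_1)=t_0+T_1>0$, dominated convergence gives $b_n\to 0$ as $n\to+\infty$ and $a_n\to 0$ as $n\to-\infty$. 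Telescoping the $b$-relation over $n\geq 0$ (which never meets the excluded value $n=-1$) produces $b_0=\sum_{m\geq 1}(-1)^{m+1}E_m$, and telescoping the $a$-relation over $n\leq -1$ (which never meets $n=0$) produces $a_0=\sum_{n\leq -1}(-1)^{n+1}E_n$; thus the single failure of the kernel identity at $n=0$ is harmlessly avoided by choosing the direction of each telescope. Finally, the endpoint cases $\alpha\in\{0,\pi/2\}$, where the bare sequence degenerates, are treated identically after replacing the plain exponentials by the polynomial-weighted terms of Proposition~\ref{prop:compensationharm0}, or by differentiating the $\alpha$-family exactly as in Remark~\ref{rem:qder}.
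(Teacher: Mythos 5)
Your proof is correct in substance but follows a genuinely different route from the paper's. The paper treats both sides of the identity as functions of the starting point $z_0$: it checks that the series $\widetilde L_1^\alpha$ is $\mathcal{G}$-harmonic with boundary values $0$ on $\mathbb{R}\times\{0\}$ and $e^{q(\alpha)y_0}$ on $\{0\}\times\mathbb{R}$ (by pairing consecutive terms), observes via the Doob $h$-transform that $z_0\mapsto L_1(q(\alpha))$ is harmonic with the same boundary values and decay at infinity, and concludes by the maximum principle. You instead work at fixed $z_0$ and derive the series directly from the functional equation restricted to $\{K=0\}$, telescoping $L_1(q_n)+L_2(p_n)=e^{p_nx_0+q_ny_0}$ along the compensation sequence using $p_{2n+1}=p_{2n}$, $q_{2n+1}=q_{2n+2}$ and the decay $L_1(q_{-2N}),L_2(p_{2N})\to0$. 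What your route buys: it dispenses with any uniqueness statement for the parabolic boundary value problem on the unbounded quadrant, and it makes explicit \emph{why} the index $n=0$ is the unique obstruction (it is the only point of the sequence on the closed arc $\overline{\mathcal{A}}$, where the kernel identity fails by exactly $h^\alpha>0$) — a fact the paper's proof encodes implicitly in the boundary value $e^{q(\alpha)y_0}$. What it costs is the branch bookkeeping on the curve, which you correctly identify and carry out ($s_{2n}=s_0-2n$, $s_{2n+1}=2\gamma-s_0+2n$, with the gap $s\in[\gamma-1,\gamma]$ avoided for all $n\neq0$); this is the genuinely delicate step and you handle it properly.

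The one shaky spot is your treatment of the endpoints $\alpha\in\{0,\pi/2\}$. Unlike the two-sided sum of Proposition~\ref{prop:compensationharm0}, the one-sided sums of this proposition do \emph{not} collapse at the endpoints and need no polynomial-weighted replacement; your suggested fix imports machinery that is irrelevant here. The correct repair is either to observe that both sides of the identity are continuous in $\alpha$ on $[0,\pi/2]$ (the series converges locally uniformly and $\alpha\mapsto L_1(q(\alpha))$ is continuous), so the endpoint cases follow from the interior ones, or to note that at $\alpha=0$ the point $s=\gamma$ reached by the indices $n=0$ and $n=1$ satisfies the kernel identity after all, since $h^\alpha\to0$ as $\alpha\to0$, so the telescope goes through unchanged. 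This is a minor blemish, not a gap in the main argument.
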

\begin{proof} The proof is again based on a compensation method.
Let us define
$$\widetilde L_1^\alpha (z_0):=\sum_{n\in -\mathbb{N}^*} (-1)^{n+1} e^{p_n x_0 + q_n y_0}.$$ 
Let us verify that the function $\widetilde L_1^\alpha$ is harmonic and satisfies the following conditions:
\begin{equation}
\begin{cases}
(\mathcal{G}  \widetilde L_1^\alpha) (z_0)  =0 
& \text{for } z_0\in(\mathbb{R}_+^*)^2,
\\
 \widetilde L_1^\alpha (z_0) =0 & \text{for } z_0=(x_0,0)\in \mathbb{R}\times \{0\} ,
\\
 \widetilde L_1^\alpha (z_0) =e^{ q(\alpha)y_0} & \text{for } z_0=(0,y_0)\in  \{0\} \times \mathbb{R}.
\end{cases}
\label{eq:uharm}
\end{equation}
The first condition is trivial by the construction of the sequel $(p_n,q_n)\in \mathcal{P}$, see Proposition~\ref{prop:compensationharm}. The boundary conditions can be seen by summing in packets of two (taking into account the first term which cannot be compensated on one side) as follows
$$
\widetilde L_1^\alpha (z_0)=  \overunderbraces{& \br{1}{=e^{ q(\alpha)y_0}  \text{ when } x_0=0 } &\br{4}{=0  \text{ when } x_0=0 }& \br{4}{=0  \text{ when } x_0=0 }}%
  {&e^{p_{-1}x_0+q_{-1}y_0} &-& e^{p_{-2}x_0+q_{-2}y_0}  &+&e^{p_{-3}x_0-q_{-3}y_0} &-&e^{p_{-4}x_0+q_{-4}y_0} &+ & e^{p_{-5}x_0+q_{-5}y_0} &+ \cdots}%
  {& \br{3}{=0  \text{ when } y_0=0 } & &\br{3}{=0  \text{ when } y_0=0 }}.
$$
With a classical martingale argument we remark that the function $z_0\mapsto L_1(q(\alpha))$ seen as a function of $z_0$ is also harmonic, i.e. $\mathcal{G}_{z_0}  L_1(q(\alpha))=0$. This function also satisfies the same boundary conditions, to verify it, it is enough to remark that via the Doob's $h$-transform we have
$$L_1(q(\alpha))
= e^{p(\alpha) x_0+q(\alpha)y_0} L_1^\alpha(0)
= e^{p(\alpha) x_0+q(\alpha)y_0}\mathbb{P}_{z_0}^\alpha(T_1<T_2)$$
and of course $\mathbb{P}^\alpha(T_1<T_2)=0$ if $z_0\in \mathbb{R}\times \{0\} $ and $\mathbb{P}^\alpha(T_1<T_2)=1$ if $z_0\in  \{0\} \times \mathbb{R} $.
The functions $\widetilde L_1^\alpha$ and $L_1(q(\alpha))$ seen as a function of $z_0$ both satisfy~\eqref{eq:uharm} and tend to $0$ at infinity. We deduce by the maximum principle that these functions are equals, i.e. $L_1(q(\alpha))=\widetilde L_1^\alpha$. 
\end{proof}

\begin{remark}[Probability of exit on an edge]
For $\alpha_\gamma \in (0,\pi/2)$ such that $\tan\alpha_\gamma=\gamma/(1-\gamma)$ we have $(p(\alpha_\gamma),q(\alpha_\gamma))=(0,0)$ and we find that the probability $\mathbb{P}(T_1<T_2)$ 
that the process exit of the cone by the vertical axis is equal to
\begin{align*}
\mathbb{P}(T_2<T_1)=L_2(0)
&=\sum_{n\in\mathbb{N}^*} (-1)^{n+1} e^{p_n x_0 + q_n y_0}
=
\sum_{n\in\mathbb{N}} e^{\overbrace{p_{2n+1}}^{p_{2n}} x_0 + \overbrace{q_{2n+1}}^{q_{2n+2}} y_0} - \sum_{n\in\mathbb{N}} e^{p_{2n+2} x_0 + {q_{2n+2}} y_0}
\\ &=
\sum_{n\in\mathbb{N}} 2 \sinh \left( \frac{p_{2n} - p_{2n+2}}{2} x_0 \right) e^{ \frac{p_{2n}+ p_{2n+2}}{2} x_0 + q_{2n+2} y_0}
\\ &= 2 \sum_{n\in\mathbb{N}} \sinh((2n+1+\gamma)x_0) e^{-(2n+2)(n+\gamma)t_0 -(\gamma+1) x_0}
\end{align*}
where $(p_0,q_0)=(0,0)$.
Similarly, we obtain
$$
\mathbb{P}(T_1<T_2)= 2 \sum_{n\in\mathbb{N}} \sinh((2n+2-\gamma)y_0) e^{-(2n+2)(n+1-\gamma)t_0 -(2-\gamma) y_0}.
$$
This result matches with Anderson's result \cite[Thm 4.1]{anderson_1960}.
\end{remark}

\begin{corollary}[Laplace transform on the boundary]
The Laplace transform $L_1$ can be extended analytically on $\mathbb{C}\setminus [q^+,\infty)$ by the formula
\begin{align*}
&L_1(q)  \\
&=
\sum_{n\in -\mathbb{N}^*} 2 \sinh \left( (2n-p_0-q_0+\gamma+1) x_0 \right) e^{ (p_0+(2n+1)(p_0-q_0)-2n(n+\gamma+1)-\gamma-1) x_0 + (q+(2n+2)(P^+(q)-q)-(2n+2)(n+\gamma)) y_0}
\\ &=
\sum_{n\in -\mathbb{N}^*}  
 e^{(p_0 +2n(p_0-q_0)-2n(n+\gamma) ) x_0 + (q_0 +(2n+2)(p_0-q_0)-(2n+2)(n+\gamma)) y_0}
\\
& \quad - e^{(p_0 +(2n+2)(p_0-q_0)-(2n+2)(n+1+\gamma) ) x_0 + (q_0 +(2n+2)(p_0-q_0)-(2n+2)(n+\gamma)) y_0}
\end{align*}
where $(p_0,q_0)=(P^+(q),q)$ and
\begin{equation}
\begin{cases}
p_{2n}=P^+(q)+2n(P^+(q)-q)-2n(n+\gamma)
\\
q_{2n}=q+2n(P^+(q)-q)-2n(n+\gamma-1)
\end{cases}
\quad\text{and}\quad
(p_{2n+1},q_{2n+1})=(p_{2n},q_{2n+2}).
\label{eq:pnqnP+}
\end{equation}
\label{cor:L1explicit}
\end{corollary}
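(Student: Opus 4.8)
The plan is to prove the formula by analytic continuation from the arc of parabola, where Proposition~\ref{prop:L1explicit} already supplies it, and then to read off the two displayed expressions by the same packet regrouping used there. First I would introduce the candidate continuation
$$
\Psi(q):=\sum_{n\in -\mathbb{N}^*}(-1)^{n+1}e^{p_n x_0+q_n y_0},
\qquad (p_0,q_0)=(P^+(q),q),
$$
with $(p_n,q_n)$ the sequence \eqref{eq:pnqnP+}. The choice of the branch $P^+$ is dictated by a direct computation from \eqref{eq:saddlepoint2} and \eqref{eq:PQ} showing that $p(\alpha)=P^+(q(\alpha))$ for every $\alpha\in(0,\pi/2)$. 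Consequently, for such $\alpha$ one has $(p_0,q_0)=(p(\alpha),q(\alpha))$, so that $\Psi(q(\alpha))$ is exactly the series of Proposition~\ref{prop:L1explicit} and therefore $\Psi$ and $L_1$ coincide on the real interval $\{q(\alpha):\alpha\in(0,\pi/2)\}=(\gamma^2/2-\gamma,\,q^+)$.

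Next I would check that $\Psi$ is holomorphic on $\mathbb{C}\setminus[q^+,\infty)$. The map $P^+$ is analytic there, its only branch cut being the ray $[q^+,\infty)$ along which the radicand $(1-\gamma)^2-2q$ becomes nonpositive; since each $p_n$ and $q_n$ is a polynomial in $P^+(q)$ and $q$, every term $e^{p_n x_0+q_n y_0}$ is analytic on the slit plane. On a compact $K\subset\mathbb{C}\setminus[q^+,\infty)$ the quantity $P^+(q)$ is bounded, while the dominant contribution $-2n(n+\gamma)$ forces $\Re p_n$ and $\Re q_n$ to tend to $-\infty$ quadratically in $n$; as $x_0,y_0>0$ this bounds the general term by $e^{-cn^2}$ uniformly on $K$, so the series converges normally on compacts and $\Psi$ is analytic by Weierstrass's theorem.

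It then remains to invoke the identity theorem. The functional equation continues $L_1$ analytically to the connected half-plane $\{\Re q<q^+\}\subset\mathbb{C}\setminus[q^+,\infty)$, since $L_1(q)=e^{P^-(q)x_0+qy_0}-L_2(P^-(q))$ with $\Re P^-(q)<0$ there. Because $L_1$ and the holomorphic function $\Psi$ agree on $(\gamma^2/2-\gamma,\,q^+)$, a set accumulating inside that half-plane, they agree on all of $\{\Re q<q^+\}$, and $\Psi$ furnishes the claimed analytic continuation of $L_1$ to the entire slit plane. Finally, the two displayed expressions are obtained from $\Psi$ exactly as in the proof of Proposition~\ref{prop:L1explicit} and the remark following it: using $(p_{2n+1},q_{2n+1})=(p_{2n},q_{2n+2})$, one pairs the consecutive terms sharing the common $q$-coordinate $q_{2n+2}$, factors out $e^{\frac{1}{2}(p_{2n+1}+p_{2n+2})x_0+q_{2n+2}y_0}$, and recognizes the resulting bracket as a $2\sinh$; substituting the explicit polynomials \eqref{eq:pnqnP+} yields the stated arguments.

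I expect the main obstacle to be the analyticity of $\Psi$ on the full slit plane rather than merely on the half-plane $\{\Re q<q^+\}$ where $L_1$ is a priori defined, since it is precisely this enlargement that upgrades the equality into a genuine analytic continuation. The accompanying uniform quadratic-decay estimate, needed to guarantee that the formal series defines a holomorphic function, is the technical heart of that step; by contrast, the passage between the two displayed forms is pure index bookkeeping.
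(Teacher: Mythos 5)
Your argument is correct and follows essentially the same route as the paper: identify $(p_0,q_0)=(P^+(q),q)$ with the saddle point on the arc where Proposition~\ref{prop:L1explicit} applies, extend by the analyticity of $P^+$ on $\mathbb{C}\setminus[q^+,\infty)$, and regroup consecutive terms via $(p_{2n+1},q_{2n+1})=(p_{2n},q_{2n+2})$ into the $2\sinh$ form. You merely make explicit the normal-convergence and identity-theorem steps that the paper leaves implicit, which is a welcome precision rather than a deviation.
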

\begin{proof}
The result directly derives from Proposition~\ref{prop:L1explicit}, the fact that $(p(\alpha),q(\alpha))=(P^+(q(\alpha)),q(\alpha))$ and the analyticity of $P^+$ on $\mathbb{C}\setminus [q^+,\infty)$. Taking $(p_0,q_0)=(P^+(q),q)$ we obtain that
\begin{align*}
L_1(q) &=\sum_{n\in -\mathbb{N}^*} (-1)^{n+1} e^{p_n x_0 + q_n y_0}
=
\sum_{n\in -\mathbb{N}^*}  
(e^{p_{2n} x_0 }
- e^{p_{2n+2} x_0 })e^{ q_{2n+2} y_0}
\\ &=
\sum_{n\in -\mathbb{N}^*} 2 \sinh \left( \frac{p_{2n} - p_{2n+2}}{2} x_0 \right) e^{ \frac{p_{2n}+ p_{2n+2}}{2} x_0 + q_{2n+2} y_0}
\end{align*}
and we conclude noticing that~\eqref{eq:pnqn} implies~\eqref{eq:pnqnP+}.
\end{proof}

\section{Exit densities and transition kernel}
\label{sec:transitionkernel}


This last section aims at computing the inverse Laplace transforms of $L_1$ and $L_2$ to obtain an explicit expression for the exit densities $f_1$ and $f_2$. Finally, these expressions are used to calculate the kernel $p_{(t_0,y_0)}^{k,C}(t,y)$.

\begin{theorem}[Exit densities on the boundaries] Let $z_0=(x_0,y_0)$ the starting point and $t_0=x_0+y_0$. Exit densities are given by the following formulas:
\begin{align*}
f_1(y)
=
\frac{e^{-t_0 (1-\gamma)-y_0 \gamma}}{\sqrt{2\pi}}
\frac{e^{-\frac{1}{2} (1-\gamma)^2(y-t_0) }}{\sqrt{(y-t_0)^3}} 
\sum_{n\in \mathbb{Z}}
((2n+1)t_0-y_0) e^{-\frac{1}{2} \frac{((2n+1)t_0-y_0)^2}{y-t_0} +(2n+1)y_0-2n(n+1)t_0}
\end{align*}
\begin{align*}
f_2(x)
=
\frac{e^{-t_0 \gamma-x_0(1- \gamma)}}{\sqrt{2\pi}}
\frac{e^{-\frac{1}{2} \gamma^2(x-t_0) }}{\sqrt{(x-t_0)^3}} 
\sum_{n\in \mathbb{Z}}
((2n+1)t_0-x_0) e^{-\frac{1}{2} \frac{((2n+1)t_0-x_0)^2}{x-t_0} +(2n+1)x_0-2n(n+1)t_0}
\end{align*}
\label{thm:f1f2}
\end{theorem}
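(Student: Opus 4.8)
The plan is to obtain $f_1$ by inverting its Laplace transform $L_1$, starting from the explicit series representation of $L_1(q)$ furnished by Corollary~\ref{cor:L1explicit}, and to invert the series term by term using a single elementary building block: the inverse Laplace transform of $e^{b\sqrt{(1-\gamma)^2-2q}}$, which is a shifted Brownian first-passage (stable-$1/2$) density. The expression for $f_2$ then follows by the coordinate symmetry ($p\leftrightarrow q$, $x\leftrightarrow y$, $\gamma\leftrightarrow 1-\gamma$) present throughout the paper, or equivalently by the same computation applied to $L_2$.

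First I would rewrite each summand of $L_1(q)$ from Corollary~\ref{cor:L1explicit}. Writing $\sigma(q):=\sqrt{(1-\gamma)^2-2q}$ and recalling $P^+(q)=\gamma-1+q+\sigma(q)$, a direct collection of exponents shows that every term has the form $e^{q t_0}\,e^{b\,\sigma(q)}\,e^{c}$, where $t_0=x_0+y_0$ and $b,c$ are explicit real constants depending on $n,x_0,y_0,\gamma$ but \emph{not} on $q$. The coefficient of $q$ in each exponent is exactly $x_0+y_0=t_0$, which will produce the common shift $y\mapsto y-t_0$ (consistent with the support $Y(T_1)=t_0+T_1\geq t_0$), while the coefficient $b$ of $\sigma(q)$ comes out as $(2n+1)x_0+(2n+2)y_0=(2n+1)t_0+y_0$ for one family of terms and $(2n+3)x_0+(2n+2)y_0=(2n+3)t_0-y_0$ for the other. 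Applying the Bromwich inversion $f_1(y)=\frac{1}{2i\pi}\int_{c-i\infty}^{c+i\infty}L_1(q)e^{-qy}\,\mathrm{d}q$ with $c<q^+:=(1-\gamma)^2/2$ then reduces the whole problem to evaluating $\mathcal{L}^{-1}\!\bigl[e^{b\sigma(q)}\bigr](y-t_0)$.

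This last inverse transform I would compute by deforming the contour to wrap the branch cut $[q^+,\infty)$ of $\sigma$. Across the cut $\sigma$ becomes purely imaginary: for $q=q^++s$ the jump of $e^{b\sigma(q)}$ equals $2i\sin(b\sqrt{2s})$, and the resulting real integral $\frac1\pi e^{-q^+(y-t_0)}\int_0^\infty \sin(b\sqrt{2s})\,e^{-s(y-t_0)}\,\mathrm{d}s$ is the classical sine integral, yielding the building block $\frac{b}{\sqrt{2\pi}(y-t_0)^{3/2}}\,e^{-b^2/(2(y-t_0))}\,e^{-\frac12(1-\gamma)^2(y-t_0)}$, where the factor $e^{-q^+(y-t_0)}$ records the branch point at $q^+$. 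One then checks that the two families of Corollary~\ref{cor:L1explicit} recombine, after an index shift on one of them and using that this density is \emph{odd} in $b$, into a single sum over $n\in\mathbb{Z}$ with signed prefactor $b=(2n+1)t_0-y_0$; collecting the $q$-independent factors $e^{c}$ reproduces the remaining exponentials $e^{-t_0(1-\gamma)-y_0\gamma}$ and $e^{(2n+1)y_0-2n(n+1)t_0}$ of the claimed formula.

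The main obstacle, and the step deserving the most care, is twofold. Analytically, the individual summands $e^{b\sigma(q)}$ with $b>0$ grow along the vertical Bromwich contour, so no single term can be inverted naively as a convergent transform in isolation — this is precisely why I would route the computation through the branch-cut deformation, where the $\sin(b\sqrt{2s})$ representation is well behaved for every real $b$; the interchange of summation and integration is then legitimated by dominated convergence, since $b=(2n+1)t_0-y_0$ grows linearly in $n$ and the Gaussian factors $e^{-b^2/(2(y-t_0))}$ decay super-exponentially, giving absolute and locally uniform convergence for $y>t_0$. Algebraically, the bookkeeping that merges the two families together with their distinct constants $e^{c}$ into the clean $\mathbb{Z}$-indexed series (in particular handling the junction terms correctly) must be carried out explicitly. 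A strictly parallel argument on $L_2$, exchanging the two coordinates and replacing $\gamma$ by $1-\gamma$, yields the stated formula for $f_2$; one could alternatively start from $f_2(x)=\tfrac12\partial_y g_{z_0}(x,0)$ of Proposition~\ref{prop:greenderiv}, but the direct inversion of $L_2$ is the shorter route.
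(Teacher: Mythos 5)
Your proposal follows essentially the same route as the paper's proof: term-by-term Laplace inversion of the explicit series for $L_1$ from Corollary~\ref{cor:L1explicit}, using the elementary inverse transform of $e^{-b\sqrt{a-2q}+qt_0}$ (the shifted stable-$1/2$ density), recombining the two families of terms into a single sum over $\mathbb{Z}$, and obtaining $f_2$ by the $\gamma\leftrightarrow 1-\gamma$, $x\leftrightarrow y$ symmetry. The only difference is presentational: you derive the building-block inversion via a branch-cut deformation and explicitly justify the interchange of summation and integration, whereas the paper simply quotes the inversion formula and inverts the series term by term without comment.
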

\begin{proof}
We seek to inverse the Laplace transform of Corollary~\ref{cor:L1explicit}. We have
\begin{align*}
L_1(q) &= 
\sum_{n\in \mathbb{N}}  
(e^{p_{-2n-2} x_0 +q_{-2n} y_0 }
- e^{p_{-2n-2} x_0 +q_{-2n-2} y_0 })
\\ &= \sum_{n\in \mathbb{N}}
e^{-(P^+(q)-q)((2n+1)t_0-y_0)+q t_0- (2n+2)(n+1-\gamma) t_0 + 2(n+1-\gamma)y_0}
\\ & \quad -
e^{-(P^+(q)-q)((2n+1)t_0+y_0)+q t_0- (2n+2)(n+1-\gamma) t_0 - (2n+2)y_0}
\\ &= \sum_{n\in \mathbb{N}}
e^{-((2n+1)t_0-y_0)\sqrt{(1-\gamma)^2-2q}+q t_0- t_0(2n(n+1)+1-\gamma)+y_0(2n+1-\gamma)}
\\ & \quad -
e^{-((2n+1)t_0+y_0)\sqrt{(1-\gamma)^2-2q}+q t_0- t_0(2n(n+1)+1-\gamma)-y_0(2n+1+\gamma)}
\end{align*}
We denote $\mathcal{L}^{-1}_q$ the inverse Laplace operator related to the $q$-variable.
For $a>0$, $b>0$ and $y>t_0$ we have
$$
\mathcal{L}^{-1}_q (e^{-b\sqrt{a-2q}+q t_0})(y)= \frac{b}{\sqrt{2\pi}}\frac{1}{\sqrt{(y-t_0)^3}}e^{-\frac{1}{2} (a(y-t_0) +\frac{b^2}{y-t_0})}.
$$
Taking the inverse Laplace transform of the sum term by term, it reads
\begin{align*}
f_1(y) &= 
\frac{e^{-t_0 (1-\gamma)-y_0 \gamma}}{\sqrt{2\pi}}\frac{1}{\sqrt{(y-t_0)^3}}
\\ & \sum_{n\in \mathbb{N}}
((2n+1)t_0-y_0) e^{-\frac{1}{2} ((1-\gamma)^2(y-t_0) +\frac{((2n+1)t_0-y_0)^2}{y-t_0})}
e^{- 2n(n+1)t_0+(2n+1)y_0}
\\ & \quad - 
((2n+1)t_0+y_0) e^{-\frac{1}{2} ((1-\gamma)^2(y-t_0) +\frac{((2n+1)t_0+y_0)^2}{y-t_0})}
e^{- 2n(n+1)t_0-(2n+1)y_0}
\end{align*}
which coincides with the stated result summing over $\mathbb{Z}$. The formula for $f_2(x)$ is obtained replacing $1-\gamma$ by $\gamma$ and $y$ by $x$.
\end{proof}

With some tedious computations, one may verify that this result matches with Anderson's result \cite[Thm 4.3]{anderson_1960} by deriving its formula (4.32).
\begin{remark}[Density of the first exit time and persistence asymptotics]
The previous result gives an explicit expression for the density of $T$ since we have seen in~\eqref{eq:fT} that $f_T(t)=f_1(t_0+t)+f_2(t_0+t)$. The persistence probability of the process after a time $t$ is given by 
\begin{align*}
\mathbb{P}(T>t &) =\mathbb{P}(T=\infty)+\int_t^\infty f_T(s)\mathrm{d}s
\\ &
\underset{t\to\infty}{=}
h^{\alpha_\gamma}(z_0) + 
\left( \frac{ h^{0}(z_0)}{\sqrt{2\pi}} \frac{e^{-\frac{\gamma^2}{2}(t+t_0)}}{t^{3/2}} + \frac{ h^{\pi/2}(z_0)}{\sqrt{2\pi}} \frac{e^{-\frac{(1-\gamma)^2}{2}(t+t_0)}}{t^{3/2}}  \right)(1+o(1))
\end{align*}
where the asymptotics derives from~\eqref{eq:asymptboundaries} and $\alpha_\gamma$ is defined in Remark~\ref{rem:persistproba}.
\end{remark}

We now use the previous result about the exit densities 
to obtain an 
explicit formula for the Green's function of the process $Z$, and then for the transition density of the space-time Brownian motion $B$, see \eqref{eq:green=densitytransition}.

\begin{corollary}[Transition density of the killed space-time Brownian motion in a cone]
For $(t_0+t,y)\in C$, the transition density defined in~\eqref{eq:defpkC} is equal to
\begin{align}
&p_{(t_0,y_0)}^{k,C}(t,y) =\frac{1}{\sqrt{2\pi t}} e^{-\frac{(y-y_0-\gamma t)^2}{2t}}- \mathbb{E}_{(t_0,y_0)}\left( \frac{1}{\sqrt{2\pi (t-T)}} e^{-\frac{(y-Y_T-\gamma (t-T))^2}{2(t-T)}}\mathds{1}_{T\leqslant t}\right)
\label{eq:pkC1}
\\
 &=
\frac{1}{\sqrt{2\pi t}} \sum_{n\in \mathbb{Z}} 
\left(
e^{-\frac{(y-2n t_0-y_0-\gamma t)^2}{2t}+\gamma (2n t_0 +y_0)}
-
e^{-\frac{(y+2n t_0+y_0-\gamma t)^2}{2t} - \gamma (2n t_0 +y_0)}
\right)
 e^{-\gamma y_0 -2n^2t_0 -2ny_0   }
 \label{eq:pkC2}
\end{align}
\label{cor:transitionkernel}
\end{corollary}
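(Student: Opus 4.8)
The plan is to establish the two displayed formulas in turn: \eqref{eq:pkC1} by a strong Markov decomposition of the free motion, and \eqref{eq:pkC2} by feeding the explicit exit densities of Theorem~\ref{thm:f1f2} into \eqref{eq:pkC1} and resumming. For \eqref{eq:pkC1}, I would start from the fact that $Y(t)=y_0+W(t)+\gamma t$ is a Brownian motion with drift $\gamma$, whose free marginal density is $\frac{1}{\sqrt{2\pi t}}e^{-(y-y_0-\gamma t)^2/(2t)}$, and split it according to the exit time: $\mathbb{P}(Y(t)\in\mathrm{d}y)=\mathbb{P}(Y(t)\in\mathrm{d}y,\,T>t)+\mathbb{P}(Y(t)\in\mathrm{d}y,\,T\leq t)$. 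By the definition~\eqref{eq:defpkC} the first term is $p^{k,C}_{(t_0,y_0)}(t,y)\,\mathrm{d}y$. For the second term I condition on $\mathcal{F}_T$ and use the strong Markov property at $T$: on $\{T\leq t\}$ the process restarts at $(t_0+T,Y_T)$ and, since $Y$ is never actually killed, it evolves afterwards as a free Brownian motion with drift $\gamma$ over the remaining time $t-T$, whose density at $y$ is $\frac{1}{\sqrt{2\pi(t-T)}}e^{-(y-Y_T-\gamma(t-T))^2/(2(t-T))}$. Taking expectations yields \eqref{eq:pkC1}.

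Next I would turn the expectation in \eqref{eq:pkC1} into integrals against $f_1$ and $f_2$. Writing $\{T\leq t\}=\{T_1<T_2,\,T_1\leq t\}\cup\{T_2<T_1,\,T_2\leq t\}$ and recalling that $X+Y=t_0+\cdot$, on the first event the process exits the upper edge with $Y_{T_1}=t_0+T_1$, so the substitution $u=t_0+T_1$ produces $\int_{t_0}^{t_0+t}f_1(u)\frac{1}{\sqrt{2\pi(t_0+t-u)}}e^{-(y-u-\gamma(t_0+t-u))^2/(2(t_0+t-u))}\,\mathrm{d}u$; on the second event it exits the lower edge with $Y_{T_2}=0$ and $X_{T_2}=t_0+T_2$, so the substitution $w=t_0+T_2$ produces $\int_{t_0}^{t_0+t}f_2(w)\frac{1}{\sqrt{2\pi(t_0+t-w)}}e^{-(y-\gamma(t_0+t-w))^2/(2(t_0+t-w))}\,\mathrm{d}w$. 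Thus the correction term in \eqref{eq:pkC1} becomes a sum of two time-convolutions of the exit densities with the free heat kernel over the complementary time.

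Then I would substitute the explicit series for $f_1,f_2$ from Theorem~\ref{thm:f1f2}. After the change of variable $s=u-t_0\in(0,t)$, each summand of $f_1$ is a constant times a first-passage kernel $\frac{b_n}{\sqrt{2\pi s^3}}e^{-(b_n-(1-\gamma)s)^2/(2s)}$ with level $b_n=(2n+1)t_0-y_0$, multiplied by the heat kernel over the remaining time $t-s$. The key analytic input is the reflection convolution identity $\int_0^t \frac{b}{\sqrt{2\pi s^3}}e^{-(b-\beta s)^2/(2s)}\,\frac{1}{\sqrt{2\pi(t-s)}}e^{-(c-\beta(t-s))^2/(2(t-s))}\,\mathrm{d}s$, which collapses each convolution into a single free Gaussian in time $t$ centred at an image point (with a Girsanov reflection factor $e^{2\beta b}$ when the endpoint lies on the far side of the level); this is the time-domain counterpart of the Laplace inversion $\mathcal{L}^{-1}_q$ already used in the proof of Theorem~\ref{thm:f1f2}. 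Collecting the images produced by the $f_1$-family (reflections in the upper edge $y=t_0+t$) and the $f_2$-family (reflections in the lower edge $y=0$), together with the $n=0$ free term, and reindexing, should reproduce exactly the sources at $\pm(2nt_0+y_0)$ and the prefactors $e^{-\gamma y_0-2n^2t_0-2ny_0}$ of \eqref{eq:pkC2}.

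The main obstacle is the bookkeeping in this last step: tracking the signs of the levels $b_n$ (so that the reflection cases of the convolution identity are applied correctly), the Girsanov factors, and the merging of the two families into a single sum over $n\in\mathbb{Z}$. To sidestep this and secure a clean proof, I would instead verify \eqref{eq:pkC2} directly and invoke uniqueness of the killed transition density. Each term of \eqref{eq:pkC2} is a constant times a free heat kernel from a source at $\pm(2nt_0+y_0)$, hence solves the forward equation $\partial_t p=\tfrac12\partial_y^2 p-\gamma\partial_y p$ on $\{0<y<t_0+t\}$. A direct computation then shows the series vanishes on $y=0$ (the positive and negative $n$-th terms coincide there) and on the moving edge $y=t_0+t$ (the positive $n$-th term cancels the negative $(n-1)$-th term, after matching both the Gaussian centres and the constant exponents), while as $t\to0^+$ only the $n=0$ positive term survives and tends to $\delta_{y_0}$ (its source is $y_0$, whereas all other sources $\pm(2nt_0+y_0)$ lie outside the starting interval since $0<y_0<t_0$). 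By uniqueness of the solution of this parabolic boundary-value problem, \eqref{eq:pkC2} equals $p^{k,C}_{(t_0,y_0)}$, which together with \eqref{eq:pkC1} completes the proof.
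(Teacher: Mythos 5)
Your derivation of \eqref{eq:pkC1} is the same as the paper's: decompose the free Gaussian marginal of $Y$ at the stopping time $T$ via the strong Markov property, and rewrite the correction term as time-convolutions of $f_1,f_2$ with the free heat kernel. For \eqref{eq:pkC2}, however, you take a genuinely different route. The paper computes the two convolution integrals $I_1(t),I_2(t)$ explicitly by taking Laplace transforms in $t$, multiplying $\mathcal{L}_t(p^\gamma_\cdot(y))$ by $\mathcal{L}_t(f_i(t_0+\cdot))$ term by term, and inverting; this is a constructive derivation of \eqref{eq:pkC2} from Theorem~\ref{thm:f1f2} and is exactly the frequency-domain version of the ``reflection convolution identity'' you mention and then abandon. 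You instead verify the candidate series directly: each summand is a multiple of a drifted heat kernel, the pairing $n\leftrightarrow n$ kills the series on $y=0$, the pairing $n\leftrightarrow n-1$ kills it on the moving edge $y=t_0+t$ (both cancellations do check out, including the matching of the constant exponents), and as $t\to 0^+$ only the $n=0$ positive term survives since all other image sources $\pm(2nt_0+y_0)$ lie outside $(0,t_0)$ --- note this uses $0<y_0<t_0$, which is the correct reading of the standing assumption. Your approach buys a cleaner computation at the price of (i) having to know the answer in advance, so it proves the identity but does not derive it from Theorem~\ref{thm:f1f2}, and (ii) needing two points you should make explicit: term-by-term differentiation of the series (immediate from the $e^{-2n^2t_0}$ decay) and a uniqueness statement for the Dirichlet problem in the moving domain with initial data $\delta_{y_0}$ in a class containing both \eqref{eq:pkC2} and the true kernel --- this is standard via the parabolic maximum principle and is in the spirit of the uniqueness argument the paper already uses in Proposition~\ref{prop:L1explicit}, but it is the one ingredient your sketch invokes without justification.
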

\begin{proof}
Let us define $p_t^\gamma(y):=\frac{1}{\sqrt{2\pi t}} e^{-\frac{(y-\gamma t)^2}{2t}}$ the transition kernel of the free Brownian motion with drift $\gamma$ starting from $0$ and recall that we note $p^{k,C}_{(t_0,y_0)}(t,y) \mathrm{d}y 
=\mathbb{P}_{(t_0,y_0)}(B(t)=(t_0+t,\mathrm{d}y), T>t)$ the transition kernel of the killed space-time Brownian motion in $C$. 
A direct calculus based on the Markov property gives
\begin{align*}
p^{k,C}_{(t_0,y_0)}(t,y) \mathrm{d}y 
&=
\mathbb{P}_{(t_0,y_0)}(B(t)=(t_0+t,\mathrm{d}y))-
\mathbb{P}_{(t_0,y_0)}(B(t)=(t_0+t,\mathrm{d}y), T\leqslant t)
\\ &=
\mathbb{P}_{(t_0,y_0)}(B(t)=(t_0+t,\mathrm{d}y))
-
\int_0^t
\mathbb{P}_{(t_0,y_0)}(B(t)=(t_0+t,\mathrm{d}y) | T=T_2=u)f_2(t_0+u)\mathrm{d}u
\\ &-
\int_0^t
\mathbb{P}_{(t_0,y_0)}(B(t)=(t_0+t,\mathrm{d}y) | T=T_1=v)f_1(t_0+v)\mathrm{d}v
\\ &=
\mathbb{P}_{(t_0,y_0)}(B(t)=(t_0+t,\mathrm{d}y))-
\int_0^t \mathbb{P}_{(t_0+u,0)}(B(t)=(t_0+t,\mathrm{d}y))f_2(t_0+u)\mathrm{d}u
\\ &-
\int_0^t \mathbb{P}_{(t_0+v,t_0+v)}(B(t)=(t_0+t,\mathrm{d}y))f_1(t_0+v)\mathrm{d}v
\\ &= \left( p_t^\gamma(y-y_0)
-
\underbrace{ \int_0^t p_{t-u}^\gamma(y)f_2(t_0+u)\mathrm{d}u}_{=:I_2(t)}
-
\underbrace{\int_0^t p_{t-v}^\gamma(y-t_0-v)f_1(t_0+v)\mathrm{d}v }_{=:I_1(t)}
\right) \mathrm{d}y
\end{align*}
and equality~\eqref{eq:pkC1} follows.
We defined in the previous formula the integrals $I_1(t)$ and $I_2(t)$ and we are going to compute these integrals by computing their Laplace transforms. We denote $\mathcal{L}_t$ the Laplace transform operator related to the $t$ variable and  $\star$ the convolution operation. We have
\begin{align}
\nonumber \mathcal{L}_t(I_2(t))(p) &=
\mathcal{L}_t \left( 
 \left( p_{\cdot}^\gamma(y)  \star f_2(t_0+\cdot)\right) (t) \right) (p)
\\
 &= 
 \mathcal{L}_t \left( 
 p_{t}^\gamma(y) \right) (p)  \times \mathcal{L}_t \left( f_2(t_0+t)  \right) (p)
\label{eq:LI2}
\end{align}
One may compute that
\begin{equation}
\mathcal{L}_t \left(   
 p_{t}^\gamma(y)
 \right) (p)
=\frac{1}{\sqrt{2(p+\gamma^2/2)}} e^{\gamma y-y\sqrt{2(p+\gamma^2/2)}}.
\label{eq:Lpgamma}
\end{equation}
Recalling that Theorem~\ref{thm:f1f2} gives
$$
f_2(t_0+u)
=
\frac{e^{-t_0 \gamma-x_0(1- \gamma)}}{\sqrt{2\pi}}
\frac{e^{-\frac{1}{2} \gamma^2u }}{{u^{3/2}}} 
\sum_{n\in \mathbb{Z}}
((2n+1)t_0-x_0) e^{-\frac{1}{2} \frac{((2n+1)t_0-x_0)^2}{u} +(2n+1)x_0-2n(n+1)t_0}
$$
we compute the following Laplace transform (we note $sgn$ the sign function)
$$
\mathcal{L}_t \left( 
\frac{((2n+1)t_0-x_0)}{\sqrt{2\pi}{t^{3/2}}} e^{-\frac{1}{2} \gamma^2t -\frac{1}{2} \frac{((2n+1)t_0-x_0)^2}{t}}   \right) (p)
= sgn(n) e^{-|(2n+1)t_0-x_0|\sqrt{2(p+\gamma^2/2)}}
$$
and we obtain
\begin{equation}
\mathcal{L}_t \left( 
 f_2(t_0+t)  \right) (p)=
 {e^{-t_0 \gamma-x_0(1- \gamma)}} 
\sum_{n\in \mathbb{Z}}
sgn(n)
 e^{-sgn(n)((2n+1)t_0-x_0)\sqrt{2(p+\gamma^2/2)} +(2n+1)x_0-2n(n+1)t_0}.
 \label{eq:Lf2}
\end{equation}
Using \eqref{eq:LI2}, \eqref{eq:Lpgamma} and \eqref{eq:Lf2} we obtain the Laplace transform of $I_2$. By inverting this Laplace transform we compute
\begin{align*}
I_2(t) &=  {e^{-t_0 \gamma-x_0(1- \gamma)}} 
\sum_{n\in \mathbb{Z}} sgn(n)
p_t^\gamma (y+sgn(n)((2n+1)t_0-x_0))
 e^{-sgn(n)\gamma((2n+1)t_0-x_0) +(2n+1)x_0-2n(n+1)t_0}
 \\ &=
 {e^{-\gamma y_0 }} 
\sum_{n\in \mathbb{Z}} sgn(n)
p_t^\gamma (y+sgn(n)(2n t_0+y_0))
 e^{-2n^2t_0 -2ny_0 - sgn(n) \gamma (2n t_0 +y_0)  }
 \\ &=
 {e^{-\gamma y_0 }} 
\sum_{n\in \mathbb{N}} 
p_t^\gamma (y+2n t_0+y_0)
 e^{-2n^2t_0 -2ny_0 - \gamma (2n t_0 +y_0)  }
 \\
 & - 
 {e^{-\gamma y_0 }} \sum_{n\in -\mathbb{N}^*} 
p_t^\gamma (y-2n t_0-y_0)
 e^{-2n^2t_0 -2ny_0 + \gamma (2n t_0 +y_0)  }
\end{align*}
In the same way, we show that for $x+y=t_0+t$ we have
\begin{align*}
I_1(t)
&=
 {e^{-(1-\gamma) x_0 }} 
\sum_{n\in \mathbb{N}} 
p_t^{1-\gamma} (x+2n t_0+x_0)
 e^{-2n^2t_0 -2nx_0 - (1-\gamma) (2n t_0 +x_0)  }
 \\
 & - 
 {e^{-(1-\gamma) x_0 }} \sum_{n\in -\mathbb{N}^*} 
p_t^{1-\gamma} (x-2n t_0-x_0)
 e^{-2n^2t_0 -2nx_0 + (1-\gamma) (2n t_0 +x_0)  }
 \\
 &=
 {e^{-\gamma y_0 }} 
\sum_{n\in \mathbb{N}} 
p_t^\gamma (y-2(n+1) t_0+y_0)
 e^{-2(n+1)^2t_0 +2(n+1)y_0 +\gamma (2(n+1) t_0 -y_0)  }
 \\
 & - 
 {e^{-\gamma y_0 }} \sum_{n\in -\mathbb{N}^*} 
p_t^\gamma (y+2n t_0-y_0)
 e^{-2n^2t_0 +2ny_0 - \gamma (2n t_0 -y_0)  }
 \\ &=
 {e^{-\gamma y_0 }} 
\sum_{n\in -\mathbb{N}^*} 
p_t^\gamma (y+2n t_0+y_0)
 e^{-2n^2t_0 -2ny_0 - \gamma (2n t_0 +y_0)  }
 \\
 & - 
 {e^{-\gamma y_0 }} \sum_{n\in \mathbb{N}^*} 
p_t^\gamma (y-2n t_0-y_0)
 e^{-2n^2t_0 -2ny_0 + \gamma (2n t_0 +y_0)  }
\end{align*}
Since $p^{k,C}_{(t_0,y_0)}(t,y)=p_t^\gamma(y-y_0)-I_1(t)-I_2(t)$ we obtain formula~\eqref{eq:pkC2} which concludes the proof.
\end{proof}
We find again the result obtained by Defosseux \cite[Proposition 2.2]{defosseux_2016}. 

The approach developed in this article which combines the steepest descent method and a recursive compensation method allowed us to determine the Martin boundary and all the harmonic functions associated with the space-time Brownian motion. By using these two methods together, we also developed a new and original approach which enabled us to recover some interesting results.

\subsection*{Acknowledgments} 
This project has received funding from Agence Nationale de la Recherche, ANR JCJC programme under the Grant Agreement RESYST ANR-22-CE40-0002.
The author would like to thank Manon Defosseux, Kilian Raschel and Pierre Tarrago for interesting discussions related to space-time Brownian motion, the compensation approach and Martin's boundary theory.

\newpage
\bibliographystyle{apalike}

\begin{thebibliography}{}

\bibitem[Adan et~al., 1993]{adan_wessels_zijm_compensation_93}
Adan, I. J.-B.~F., Wessels, J., and Zijm, W. H.~M. (1993).
\newblock A compensation approach for two-dimensional {M}arkov processes.
\newblock {\em Adv. in Appl. Probab.}, 25(4):783--817.

\bibitem[Anderson, 1960]{anderson_1960}
Anderson T. W. (1960).
\newblock A modification of the sequential probability ratio test to
reduce the sample size 
\newblock {\em Ann. Math. Statist.}, vol. 31, March 1960, pp. 165-197.

\bibitem{banuelos_smits_97}
Bañuelos, R., Smits, R. (1997).
\newblock Brownian motion in cones. 
\newblock {\em Probab Theory Relat Fields}, 108, 299–319.

\bibitem[Biane et~al., 2005]{biane_bougerol_oconnell_2005}
Biane, P., Bougerol, P., O'Connell N. (2005).
\newblock Littelmann paths and Brownian paths.
\newblock {\em Duke Math. J.}, 130(1):127--167.

\bibitem{brychkov_multidimensional_1992}
Y.~Brychkov, H.-J. Glaeske, A.~Prudnikov, and V.~K. Tuan. 
\newblock {\em Multidimensional {Integral} {Transformations}}.
\newblock CRC Press, Jan. 1992.

\bibitem{dai_reflecting_2011}
Dai, J.~G. , and~Miyazawa, M. (2011).
\newblock Reflecting {B}rownian motion in two dimensions: exact asymptotics for the stationary distribution.
\newblock {\em Stoch. Syst.}, 1(1):146--208.

\bibitem{deblassie_87}
DeBlassie, R.D. (1987).
\newblock Exit times from cones in $\mathbb{R}^n$ of Brownian motion. 
\newblock {\em Probab. Th. Rel. Fields}, 74, 1–29. 

\bibitem[Defosseux, 2016]{defosseux_2016}
Defosseux, M. (2016).
\newblock Affine Lie algebras and a conditioned space-time Brownian motion in an affine Weyl chamber
\newblock {\em Probab. Theory Relat. Fields}, 165, 649–665.

\bibitem{denisov_wachtel_2015}
Denisov D., and Wachtel, V. (2015).
\newblock Random walks in cones. 
\newblock {\em Ann. Probab.}, 43(3):992–1044.

\bibitem{doetsch_introduction_1974}
Doetsch, G. 
\newblock {\em Introduction to the {Theory} and {Application} of the {Laplace}
  {Transformation}}.
\newblock Springer Berlin Heidelberg, Berlin, Heidelberg, 1974.

\bibitem[Doob, 2001]{Doob2001} 
Doob, J. (2001). Classical Potential Theory and Its Probabilistic Counterpart. {\em Classics In Mathematics}. 

\bibitem{duraj_tarrago_raschel_wachtel}
Duraj, J., Raschel, K., Tarrago, P., and Wachtel, V. (2022).
\newblock Martin boundary of random walks in convex cones
\newblock {\em Annales Henri Lebesgue} 5, 559-609.

\bibitem{ernst_franceschi_asymptotic_2021}
Ernst P.~A.  and S.~Franceschi. (2021).
\newblock {Asymptotic behavior of the occupancy density for obliquely reflected
  Brownian motion in a half-plane and Martin boundary}.
\newblock {\em The Annals of Applied Probability}, 31(6):2991--3016.

\bibitem{FIM17}
G.~Fayolle, R.~Iasnogorodski, and V.~Malyshev.
\newblock {\em Random walks in the quarter plane}, volume~40 of {\em
  Probability Theory and Stochastic Modelling}.
\newblock Springer, Cham, second edition, 2017.
\newblock Algebraic methods, boundary value problems, applications to queueing
  systems and analytic combinatorics.
  
  \bibitem{fedoryuk_asymptotic_1989}
M.~V. Fedoryuk.
\newblock Asymptotic methods in analysis.
\newblock In {\em Analysis {I}}, pages 83--191. Springer, 1989.
  
  \bibitem{franceschi_green_2021}
Franceschi, S. (2021).
\newblock Green's functions with oblique neumann boundary conditions in the
  quadrant.
\newblock {\em Journal of Theoretical Probability}, 34(4):1775--1810.

\bibitem[Franceschi and Kourkova, 2017]{franceschi_asymptotic_2016}
Franceschi, S. and Kourkova, I. (2017).
\newblock Asymptotic expansion of stationary distribution for reflected
  brownian motion in the quarter plane via analytic approach.
\newblock {\em Stochastic Systems}, 7(1):32--94.

\bibitem[Franceschi, Kourkova, Petit, 2023]{franceschi_kourkova_petit_2023}
Franceschi, S., Kourkova, I., Petit, M. (2023).
\newblock Asymptotics for the Green's functions of a transient reflected Brownian motion in a wedge.
\newblock {\em arXiv:2310.02673}. 

\bibitem[Franceschi, Ichiba, Karatzas, Raschel , 2023+]{FIKR_2023}
Franceschi, S., Ichiba, T., Karatzas, I., Raschel, R. (2024).
\newblock Invariant measure of gaps in degenerate competing
three-particle systems.
\newblock {\em arXiv:2401.10734}.

\bibitem[Garbit, Raschel, 2014]{garbit_exit_brownian_2014}
Garbit, R. Raschel, K. (2014).
\newblock On the exit time from a cone for Brownian motion with drift.
\newblock {\em Electronic Journal of Probability}, 19, 1-27 no. 63

\bibitem[Garbit, Raschel, 2023]{garbit_raschel_pyramid_2023}
Garbit, R. Raschel, K. (2023).
\newblock Random walks with drift inside a pyramid: convergence rate for the survival probability.
\newblock {\em ALEA}, Lat. Am. J. Probab. Math. Stat. 20, 973–988.

\bibitem[Gikhman, Skorokhod, 2004]{skorokhod_2004}
Gikhman, I. I., Skorokhod, A. V. (2004). 
\newblock The Theory of Stochastic Processes II. 
\newblock In Classics in Mathematics. Springer Berlin Heidelberg.

\bibitem[Hoang et~al., 2022]{hoang_raschel_tarrago_constructing_22}
Hoang, V.~H., Raschel, K., and Tarrago, P. (2022).
\newblock Constructing discrete harmonic functions in wedges.
\newblock {\em Trans. Amer. Math. Soc.}, 375(7):4741--4782.

\bibitem[Hoang et~al., 2023]{hoang_raschel_tarrago_harmonic_22}
Hoang, V. H., Raschel, K. and Tarrago, P. (2023).
\newblock Harmonic functions for singular quadrant walks. 
\newblock {\em Indagationes Mathematicae}. \textbf{34}, 936-972.

\bibitem[Ignatiouk-Robert, 2009]{ignatiouk_robert2009} 
Ignatiouk-Robert, I. (2009). 
\newblock Martin boundary of a reflected random walk on a half-space.
\newblock {\em Probab. Theory Related Fields}, 148:197--245.

\bibitem[Ignatiouk-Robert, 2010]{ignatiouk-robert2010}
Ignatiouk-Robert, I. (2010).
\newblock $t$-{M}artin boundary of reflected random walks on a half-space.
\newblock {\em Electron. Commun. Probab.}, 15:149--161.

\bibitem[Ignatiouk-Robert, Loree, 2010]{ignatiouk_loree_2010} I. Ignatiouk-Robert and C. Loree (2010). 
\newblock Martin boundary of a killed random walk on a quadrant.
\newblock {\em Ann. Probab.}, 38 1106–1142.

\bibitem[Kourkova and Raschel, 2011]{kourkova_random_2011}
Kourkova, I. and Raschel, K. (2011).
\newblock Random walks in {$(\Bbb Z_+)^2$} with non-zero drift absorbed at the axes.
\newblock {\em Bulletin de la Société Mathématique de France}, 139:341--387.

\bibitem[Kourkova and Malyshev, 1998]{kurkova_martin_1998}
Kourkova, I.~A. and Malyshev, V.~A. (1998).
\newblock Martin boundary and elliptic curves.
\newblock {\em Markov Processes and Related Fields}, 4(2):203--272.

\bibitem[Kourkova and Suhov, 2003]{kurkova_malyshevs_2003}
Kourkova, I.~A. and Suhov, Y.~M. (2003).
\newblock Malyshev's {Theory} and {JS}-{Queues}. {Asymptotics} of {Stationary}
  {Probabilities}.
\newblock {\em The Annals of Applied Probability}, 13(4):1313--1354.

\bibitem[Kralchev, 2007]{kralchev_moving_2007} 
D. P. Kralchev (2007). 
\newblock On the Moving Boundary Hitting Probability for the Brownian Motion.
\newblock {\em Pliska Stud. Math. Bulgar.}, 18, 183–194.

\bibitem[Lieshout and Mandjes, 2008]{lieshout_asymptotic_2008}
Lieshout, P. and Mandjes, M. (2008).
\newblock Asymptotic analysis of {Lévy}-driven tandem queues.
\newblock {\em Queueing Systems}, 60(3-4):203--226.

\bibitem[Malyshev, 1973]{malyshev_asymptotic_1973}
Malyshev, V.~A. (1973).
\newblock Asymptotic behavior of the stationary probabilities for
  two-dimensional positive random walks.
\newblock {\em Siberian Mathematical Journal}, 14(1):109--118.

\bibitem[Martin, 1941]{martin1941}
Martin, R.~S. (1941).
\newblock Minimal positive harmonic functions.
\newblock {\em Transactions of the American Mathematical Society},
  49(1):137--172.
  
\bibitem{ney_spitzer_66}  
Ney, P., and Spitzer, F. (1966). 
\newblock The Martin boundary for random walk. 
\newblock {\em Trans. Amer. Math. Soc.} 121 116–132.


\bibitem[Novikov, 1979]{novikov_1979}
Novikov, A. A. (1979).
\newblock On estimates and the asymptotic behavior of nonexit probabilities of a Wiener process to a moving boundary 
\newblock Mat. Sb. (N.S.), 110(152):4(12) (1979), 539–550; Math. USSR-Sb., 38:4 (1981), 495–505.

\bibitem[Novikov, 1999]{novikov_1999}
Novikov, A. , Frishling, V., Kordzakhia, N. (1999). 
\newblock Approximations of boundary crossing probabilities
for a Brownian motion. 
\newblock {\em J. Appl. Prob.}, 36, 1019–1030.

\bibitem[Shepp, 1967]{shepp_1967}
Shepp, L. A. (1967).
\newblock A first passage problem for the Wiener process 
\newblock {\em Ann. Math. Statist.},
vol. 38, No. 6 (1967), pp. 1912-1914.

\bibitem[Skorokhod, 1964]{skorokhod_1964}
Skorokhod, A. V. (1964). 
\newblock Random processes with independent increments 
\newblock Moscow, "Nauka" (in Russian).

%
%
%
%
%
%
%
%
%


%
%
%
%
%
%
%
%
%
\end{thebibliography}



\end{document}